\theoremstyle{plain}
\newtheorem{theorem}{Theorem}[section]
\newtheorem*{thm*}{Theorem}
\newtheorem{prop}[theorem]{Proposition}
\newtheorem{lemma}[theorem]{Lemma}
\newtheorem{cor}[theorem]{Corollary}
\theoremstyle{definition}
\newtheorem{dfn}[theorem]{Definition}
\theoremstyle{remark} 
\newtheorem{example}[theorem]{Example}
\newtheorem{remark}[theorem]{Remark}
\theoremstyle{plain}
\numberwithin{equation}{section}
\newcommand{\alpheqn}[1][\relax]{
     \refstepcounter{equation}
     \if#1\relax \relax
       \else \label{#1}
     \fi  
     \setcounter{saveeqn}{\value{equation}}%
    \setcounter{equation}{0}%
    \renewcommand{\theequation}{\thealphequation}}
\newcommand{\reseteqn}{\setcounter{equation}{\value{saveeqn}}%
     \renewcommand{\theequation}{\thearabicequation}}
\providecommand{\mathscr}{\mathcal} 
\newcommand{\cd}{\cdot}
\newcommand{\ot}{\otimes}
\newcommand{\hot}{\widehat \otimes}
\newcommand{\op}{\oplus}
\newcommand{\ci}{\circ}
\newcommand{\ti}{\times}
\newcommand{\nn}{\mathbb{N}}
\newcommand{\zz}{\mathbb{Z}}
\newcommand{\rr}{\mathbb{R}}
\newcommand{\cc}{\mathbb{C}}
\newcommand{\al}{\alpha}
\newcommand{\be}{\beta}
\newcommand{\ga}{\gamma}
\newcommand{\Ga}{\Gamma}
\newcommand{\de}{\delta}
\newcommand{\ep}{\varepsilon}
\newcommand{\io}{\iota}
\newcommand{\ka}{\kappa}
\newcommand{\la}{\lambda}
\newcommand{\La}{\Lambda}
\newcommand{\Om}{\Omega}
\newcommand{\si}{\sigma}
\newcommand{\ze}{\zeta}
\newcommand{\pa}{\partial}
\newcommand{\ov}{\overline}
\newcommand{\C}[1]{\mathcal{#1}}
\newcommand{\T}[1]{\textup{#1}}
\newcommand{\E}[1]{\emph{#1}}
\newcommand{\B}[1]{\mathbb{#1}}
\newcommand{\s}[1]{\mathscr{#1}}
\newcommand{\fork}[2]{\left\{ \begin{array}{#1} #2 \end{array} \right.}
\newcommand{\rar}{\Rightarrow}
\newcommand{\su}{\subseteq}
\newcommand{\q}{\qquad}
\newcommand{\inn}[1]{\langle #1 \rangle}
\newcommand{\cro}[1]{C_r^*(\zz,#1)}
\newcommand{\sem}{\setminus}
\newtheorem{thm}[theorem]{Theorem}
\newtheorem{lem}[theorem]{Lemma}
\newcommand{\sa}{{\operatorname{sa}}}
\newcommand{\ZZ}{{\mathbb{Z}}}
\newcommand{\vertiii}[1]{{\left\vert\kern-0.25ex\left\vert\kern-0.25ex\left\vert #1 
    \right\vert\kern-0.25ex\right\vert\kern-0.25ex\right\vert}}
\newcommand{\Bvert}[1]{{\Big\vert\kern-0.25ex\Big\vert\kern-0.25ex\Big\vert #1 
    \Big\vert\kern-0.25ex\Big\vert\kern-0.25ex\Big\vert}}
\newcommand{\bvert}[1]{{\big\vert\kern-0.25ex\big\vert\kern-0.25ex\big\vert #1 
    \big\vert\kern-0.25ex\big\vert\kern-0.25ex\big\vert}}
\newcommand{\nvert}[1]{{\vert\kern-0.25ex\vert\kern-0.25ex\vert #1 
    \vert\kern-0.25ex\vert\kern-0.25ex\vert}}
\renewcommand{\leq}{\leqslant}
\renewcommand{\geq}{\geqslant}
\declaretheorem[style=theorem,name={Theorem}]{theoremletter}
\declaretheorem[style=theorem,name={Question}]{questionletter}
\newtheorem{introquestion}[questionletter]{Question}
\begin{document}

\author{Jens Kaad}
\address{Jens Kaad, Department of Mathematics and Computer Science, University of Southern Denmark, Campusvej 55, DK-5230 Odense M, Denmark}
\email{kaad@imada.sdu.dk}

\author{David Kyed}
\address{David Kyed, Department of Mathematics and Computer Science, University of Southern Denmark, Campusvej 55, DK-5230 Odense M, Denmark}
\email{dkyed@imada.sdu.dk}

\dedicatory{Dedicated to Ryszard Nest on the occasion of his upcoming retirement \\   --- with great admiration and gratitude for all the things you taught us.}

\subjclass[2010]{ Primary: 46L05; Secondary: 46L89, 47L65,  46L05 }
\keywords{Quantum metric spaces, crossed products, dynamics of Riemannian manifolds}

\begin{abstract}
We provide a detailed study of actions of the integers on compact quantum metric spaces, which includes general criteria ensuring that the associated crossed product algebra is again a compact quantum metric space in a natural way. We moreover provide a flexible set of assumptions ensuring that a continuous family of $*$-automorphisms of a compact quantum metric space, yields a field of crossed product algebras which varies continuously in Rieffel's quantum Gromov-Hausdorff distance. Lastly we show how our results apply to continuous families of Lip-isometric actions on compact quantum metric spaces and to families of diffeomorphisms of compact Riemannian manifolds which vary continuously in the Whitney $C^1$-topology.
\end{abstract}

\title{Dynamics of compact quantum metric spaces}

\maketitle

\section{Introduction}

In a series of papers from the late nineties and the beginning of this century, a theory of quantum metric spaces was initiated and developed by Rieffel, \cite{Rie:MSS,Rie:MSA,Rie:GHD}. The core idea is to investigate metrics on state spaces of $C^*$-algebras that arise via duality from a densely defined semi-norm (called the Lip-norm) with the crucial property being that the metric in question actually metrizes the weak $*$-topology. The classical origin of this idea is the Monge-Kantorovi\v{c} metric on the set of regular Borel probability measures on a compact metric space, \cite{Kan:ESQ}, but Rieffel's theory is also deeply linked to current developments in noncommutative geometry, \cite{Con:CFH,Con:NCG,Con:GFN}. Indeed, the prototypical example of a semi-norm arises as the composition of the operator norm with the densely defined derivation given by taking commutators with  the Dirac operator of a spectral triple, \cite{Rie:CQM}. \\
A particularly interesting aspect of Rieffel's theory is that it admits a noncommutative counterpart of the Gromov-Hausdorff distance for compact metric spaces, \cite{Gro:GPE,Rie:GHD}. This notion of distance is referred to as the quantum Gromov-Hausdorff distance and one may apply it to investigate convergence questions for families of quantum metric spaces in a whole range of contexts. Most famously, this was addressed by Rieffel in \cite{Rie:MSG} for the case of matrices (fuzzy spheres) converging to the $2$-sphere and for noncommutative tori, which vary continuously in the deformation parameter $\theta$ (a real skew-symmetric matrix), see \cite{Rie:GHD}. This  line of reasoning was further developed by Li who proved that more general $\theta$-deformations of torodial spin manifolds vary continuously in the deformation parameter when the semi-norm comes from the Connes-Landi-Dubois-Violette Dirac operator, see \cite{Li:DCQ,Li:GH-dist,CoLa:NII,CoDu:NFS}. Further important work on the convergence problem was carried out by Latr\'emoli\`ere, in particular regarding the   approximations of tori by fuzzy tori \cite{Lat:AQQ}.
In more recent years, new developments were pioneered by Latr\'emoli\`ere on alternative distances between quantum metric spaces, see \cite{Lat:DGH, Lat:QGH},  which has given rise to numerous new convergence results, including  the approximation of non-commutative solenoids by quantum tori by Latr\'emoli\`ere and Packer \cite{LatPack:solenoids} and the approximation of AF-algebras by matrix algebras by Aguilar and Latr\'emoli\`ere   \cite{LatAgu:AF}. A pivotal role in the deep work of Latr\'emoli\`ere is played by the Leibniz inequality, which is for example satisfied when the semi-norm comes from a derivation. One of the main motivations for this work was to incorporate the $*$-algebraic structure of $C^*$-algebras and not only the order unit structure of their real part. Indeed, in Latr\'emoli\`ere's framework it holds that two $C^*$-algebraic quantum metric spaces with distance equal to zero are actually $*$-isomorphic and not just isomorphic as order unit spaces. It should be mentioned that this property is also shared by other notions of distance as proposed in \cite{Li:CQG,Ker:MQG}. \\

In the present work we shall consider the general setup of a $C^*$-algebraic compact quantum metric space $(B,L)$ equipped with a $*$-automorphism $\be \colon B \to B$. 
The dynamics of $\beta$ is naturally encoded in the associated crossed product algebra  $\cro{B,\be}$, and our main focus will be on lifting the quantum metric structure from $B$ to  $\cro{B,\be}$ and investigate convergence problems in the quantum Gromov-Hausdorff metric under deformations of the $*$-automorphism $\be$.
More specifically we are interested in the following two central questions:
\begin{introquestion}\label{que:first-question}
Suppose that $B$ carries a densely defined semi-norm $L_B \colon V_B \to [0,\infty)$ such that $(B,L_B)$ becomes a compact quantum metric space. When can we ensure that the crossed product $\cro{B,\be}$ becomes a compact quantum metric space for some `explicit and natural' semi-norm $L$ involving the semi-norm $L_B$?
\end{introquestion}
Perhaps a comment pertaining to the adjectives `explicit' and `natural' is in order: Similarly to the classical result that any separable compact Hausdorff space is metrizable, one can also show that any separable unital $C^*$-algebra can be endowed with a compact quantum metric space structure \cite{Rieffel:group-C-star-algebras-as-cqms}. But, as is the case in the classical situation, this quantum metric structure is not necessarily particularly illuminating. {For the case of crossed products referred to in the above question,} it would for example not reflect the additional data coming from the fixed semi-norm on $B$ and the automorphism $\beta$, so in this way such an answer is not very interesting and we are seeking more naturally occurring seminorms which capture the additional data available.\\
Provided that one obtains a satisfying answer to Question \ref{que:first-question}, the following problem arises:

\begin{introquestion}\label{que:second-question}
Suppose that one has a parametrized family of $*$-automorphisms $\{\beta_t\}_{t\in T}$ of the base algebra $B$ and a way of turning the associated crossed product algebras $\{ \cro{B,\be_t}\}_{t \in T}$ into compact quantum metric spaces. Under which conditions on the family $\{\beta_t\}_{t\in T}$ do these vary continuously in the quantum Gromov-Hausdorff distance?
\end{introquestion}


Question \ref{que:first-question} was previously investigated in \cite{BMR:DSS} and \cite{HSWZ:STC}. The main focus in these papers was on the situation where the semi-norm $L_B \colon V_B \to [0,\infty)$ comes from a unital spectral triple $(\s B,H,D)$ by taking commutators, so that $V_B = \s B$ and $L_B(b) := \| \ov{[D,b]} \|_\infty$, for all $b \in \s B$. Moreover, most of the work is carried out under the additional assumption of equicontinuity meaning that $\sup_{n \in \zz} L_B(\be^n(b)) < \infty$, for all $b \in \s B$, implicitly saying here also that $\be(\s B) = \s B$. This assumption is for example satisfied when $\be$ is a Lip-isometry in the sense that $L_B(\be(b)) = L_B(b)$, for all $b \in \s B$. In the equicontinuous setting it is possible to write down a semi-norm, which turns the crossed product  $\cro{B,\be}$ into a compact quantum metric space, and  comes from a unital spectral triple for the crossed product algebra. When leaving the equicontinuous setting, not much seems to be known, but a general result was obtained in this direction by Bellissard, Marcolli, and Reihani in \cite[Theorem 3]{BMR:DSS}, using ideas inspired and motivated by the constructions of Connes and Moscovici for crossed products by groups of diffeomorphisms, \cite{CoMo:LIF}. However, a serious drawback of the construction in \cite[Theorem 3]{BMR:DSS} is that one is forced to enlarge to a non-unital base algebra before performing the crossed product construction, corresponding to the passage from a compact to a non-compact base space.
The methods developed in the present paper also apply beyond the equicontinuous setting but are different from what has previously been considered, in so far that we do not need to change the $C^*$-algebra and we may thus work directly with the crossed product in question. In particular, contrary to what happens in \cite{BMR:DSS}, we do not need to pass to locally compact quantum metric spaces, which we consider to be an important point since locally compact quantum metric spaces are known to display quite a different behaviour than their compact counterparts, \cite{Lat:BLD,Lat:QLC,MeRe:NMU}. The price to pay, however, is that our compact quantum metric spaces may not satisfy the Leibniz inequality. \\

Our first main result provides the following general answer to Question \ref{que:first-question}:


\begin{theoremletter}[see Theorem \ref{t:quacross}]\label{thm:intro-theorem-on-cqms-structure}
Let $(B,L_B)$ be a $C^*$-algebraic compact quantum metric space and suppose that the semi-norm $L_B$ is lower semi-continuous on its  domain $V_B$ and that $\be(V_B) = V_B$. Let $\nvert{\cd} : C_c(\zz) \to [0,\infty)$ be a norm which is order preserving in the sense that $\bvert{ h } \leq \bvert{ k }$ whenever $0\leq h(n) \leq k(n)$ for all $n \in \zz$. Then the semi-norm $L \colon C_c(\zz,V_B) \to [0,\infty)$ defined by
\[
L( x ) := \max\left\{ \Big\| \sum_n n x(n) U^n \Big\| , \bvert{ L_B \ci x}, \bvert{L_B \ci x^*} \right\}
\]
provides the crossed product $\cro{B,\be}$ with the structure of a $C^*$-algebraic compact quantum metric space. 
\end{theoremletter}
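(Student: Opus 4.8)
The plan is to verify the two conditions in Rieffel's criterion for a lower semi-continuous seminorm $L$ on a dense unital $*$-subspace of a unital $C^*$-algebra $A := \cro{B,\be}$: namely that (i) $L(x)=0$ exactly when $x\in\cc\cd 1$, and (ii) the image of $\mathcal L_1:=\{x : L(x)\leq 1\}$ in the quotient $A/\cc\cd 1$ is totally bounded; together these guarantee that $\rho_L$ metrizes the weak-$*$ topology and that the state space has finite diameter. First I would record the structural preliminaries: since $1_B\in V_B$, $V_B$ is dense and $\be(V_B)=V_B$, the space $C_c(\zz,V_B)$ is a dense unital $*$-subalgebra of $A$ on which $L$ is a genuine seminorm, and the adjoint identity $(x^*)(m)=\be^m(x(-m)^*)$ together with $\|D(x^*)\|=\|D(x)\|$ (where $D(x):=\sum_n n\,x(n)U^n$) shows $L(x^*)=L(x)$. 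Condition (i) is then immediate: if $L(x)=0$ then $\nvert{L_B\ci x}=0$ forces $L_B(x(n))=0$ for every $n$, hence $x(n)\in\cc 1_B$ because $(B,L_B)$ is a compact quantum metric space, and $\|\sum_n n\,x(n)U^n\|=0$ then forces all non-zero modes to vanish, as the $U^n$ are linearly independent in $C_r^*(\zz)\su A$.

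The core is condition (ii), and the key observation is that the first slot of $L$ is the generator of the dual circle action. Writing $\ga\colon\mathbb T\to\operatorname{Aut}(A)$ for the dual action $\ga_\te(bU^n)=e^{in\te}bU^n$, its infinitesimal generator $\de$ satisfies $\de(x)=iD(x)$ on $C_c(\zz,V_B)$, so $\|D(x)\|=\|\de(x)\|$. From the Fourier projections $E_n(a):=\tfrac1{2\pi}\int_{\mathbb T}\ga_\te(a)e^{-in\te}\,d\te$, which satisfy $E_n(x)=x(n)U^n$ and $\|E_n\|\leq 1$, applying $E_n$ to $D(x)$ gives $\|n\,x(n)\|=\|E_n(D(x))\|\leq\|D(x)\|\leq 1$, hence the crucial decay $\|x(n)\|\leq 1/|n|$ for every $x\in\mathcal L_1$ and $n\neq 0$. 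On the other hand, order-preservation of $\nvert{\cd}$ applied to $0\leq L_B(x(n))\de_n\leq L_B\ci x$ (with $\de_n\in C_c(\zz)$ the indicator at $n$) yields $L_B(x(n))\leq \nvert{\de_n}^{-1}=:C_n<\infty$ for all $n$.

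Next I would run a Fej\'er summation argument to reduce to finitely many modes uniformly over $\mathcal L_1$. With the Fej\'er means $\si_N(x)=\tfrac1{2\pi}\int_{\mathbb T}F_N(\te)\ga_\te(x)\,d\te=\sum_{|n|\leq N}\big(1-\tfrac{|n|}{N+1}\big)x(n)U^n$, positivity and normalization of the kernel $F_N$ together with the mean-value bound $\|x-\ga_\te(x)\|\leq|\te|\,\|\de(x)\|$ give
\[
\|x-\si_N(x)\|\leq \tfrac1{2\pi}\int_{\mathbb T}F_N(\te)\,|\te|\,d\te\cdot\|\de(x)\|=\eta_N\,\|\de(x)\|\leq\eta_N ,
\]
with $\eta_N\to 0$ independently of $x\in\mathcal L_1$. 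It therefore suffices to show that $\{\si_N(x)+\cc 1 : x\in\mathcal L_1\}$ is totally bounded in $A/\cc 1$ for each fixed $N$. For $n=0$ the set $\{x(0)+\cc 1_B : L_B(x(0))\leq C_0\}$ is totally bounded in $B/\cc 1_B$ by the compact quantum metric space property of $(B,L_B)$, while for $0<|n|\leq N$ the set $\{b : \|b\|\leq 1/|n|,\ L_B(b)\leq C_n\}$ is totally bounded in $B$ by the norm-bounded form of Rieffel's criterion. Feeding these into the bounded linear map $B/\cc 1_B\times\prod_{0<|n|\leq N}B\to A/\cc 1$, $(b_0,(b_n))\mapsto b_0+\sum_{0<|n|\leq N}(1-\tfrac{|n|}{N+1})b_nU^n+\cc 1$, exhibits $\{\si_N(x)+\cc 1\}$ as contained in a continuous image of a finite product of totally bounded sets, hence totally bounded; combined with the uniform estimate above, $\mathcal L_1$ has totally bounded image in $A/\cc 1$.

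Finally I would confirm lower semi-continuity of $L$ (part of the compact quantum metric space axioms used here): the first slot is lower semi-continuous because $\|\de(x)\|=\sup_{0<|\te|\leq\pi}\|\ga_\te(x)-x\|/|\te|$ is a supremum of norm-continuous functions, while the slots $\nvert{L_B\ci x}$ and $\nvert{L_B\ci x^*}$ inherit lower semi-continuity from that of $L_B$ via continuity of the projections $E_n$, the coordinatewise bounds $L_B(x(n))\leq C_n$, and monotonicity and continuity of $\nvert{\cd}$ on the finite-dimensional subspaces involved. The main obstacle is the uniform tail control in the third step: one must recognise $D$ as the generator of the dual action in order to extract both the modulus-of-continuity estimate driving the Fej\'er convergence and the mode-wise decay $\|x(n)\|\leq 1/|n|$, the latter being precisely what forces the non-zero modes---where no scalar may be subtracted---into genuinely norm-totally-bounded balls rather than merely totally bounded sets modulo scalars.
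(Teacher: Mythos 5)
Your proof is correct and follows essentially the same route as the paper's: the paper establishes this result by verifying the hypotheses of Li's Theorem 4.1 in \cite{Li:DCQ}, and the proof of that theorem is precisely the Fej\'er-summation-against-the-dual-action argument you carry out by hand, driven by the same two estimates --- $\|x(n)\|\leq 1/|n|$ extracted from the first slot via the Fourier projections, and $L_B(x(n))\leq\nvert{e_n}^{-1}$ extracted from order-preservation --- followed by total boundedness in each spectral subspace coming from the compact quantum metric space property of $(B,L_B)$. The only substantive difference is that by inlining the summation step and bounding the Fourier coefficients of $x$ itself (rather than controlling the semi-norm of the Fej\'er means), you bypass the contractivity condition $L(\al_\varphi(x))\leq\|\varphi\|_1\cdot L(x)$ and the lower semi-continuity of $L$, both of which the paper verifies only in order to invoke Li's theorem.
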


We consider it to be a rather striking feature of the above theorem that the only link between the semi-norm $L_B \colon V_B \to [0,\infty)$ and the $*$-automorphism $\be : B \to B$ is that both $\be$ and $\be^{-1}$ are required to preserve the domain $V_B$, so that we do not need to link $\be$ to $L_B$ by imposing extra continuity constraints. Remark in this respect that the domain $V_B$ need not be complete with respect to the norm $\| \cd \| + L_B(\cd)$ so that one can not use the closed graph theorem to obtain automatic continuity results. \\ 

In the geometric setting, where the unital base $C^*$-algebra $B$ is equipped with a unital spectral triple $(\s B,H,D)$ and $L_B(b):=\|\overline{[D,b]}\|_\infty$ we also provide a more geometric construction of semi-norms relating more directly to the spectral geometry of the crossed product. We emphasize that the following theorem, for instance, applies to the general setting where $M$ is a connected, compact, Riemannian manifold equipped with \emph{any} diffeomorphism $\psi \colon M \to M$. In this context, we may choose $D \colon \T{Dom}(D) \to L^2(M,\La T^* M)$ to be the closure of the Hodge-de Rham operator $d + d^*$ and $\be \colon C(M) \to C(M)$ comes from the diffeomorphism via Gelfand duality: $\be(f) := f \ci \psi$. The constants $\la,\mu \geq 1$ appearing in the theorem below then correspond to the operator norms of the exterior derivatives $d \psi$ and $d \psi^{-1}$.

\begin{theoremletter}[see Corollary \ref{cor:BMR-cor}]\label{t:twisted}
Suppose that $(\s B,H,D)$ is a unital spectral triple of parity $p \in \{ \T{even}, \T{odd} \}$, which is also a spectral metric space in the sense that the semi-norm $L_B(b) := \| \ov{[D,b]} \|_\infty$ provides the $C^*$-completion $B$ with the structure of a compact quantum metric space. Suppose, moreover, that $\be(\s B) = \s B$ and that there exist constants $\la,\mu \geq 1$ such that
\[
\sup_{n \in \nn_0} \mu^{-n} \cd L_B( \be^n(b)) < \infty \q \mbox{and} \q \sup_{n \in \nn_0} \la^{-n} \cd L_B(\be^{-n}(b)) < \infty,
\]
for all $b \in \s B$. Then there exist a $*$-derivation $\pa_1$ and a twisted $*$-derivation $\pa_\Ga$, both defined on $C_c(\zz,\s B)$, such that the semi-norm 
\[
L_\Ga(x) := \fork{ccc}{ \max\Big\{ \| \pa_\Ga(x) + i \pa_1(x) \|_\infty, \| \pa_\Ga(x) - i \pa_1(x) \|_\infty \Big\} & \E{for} & p \E{ odd } \vspace{0.2cm} \\
\| \pa_\Ga(x) + (1 \ot \ga) \pa_1(x) \|_\infty  & \T{for} & p \E{ even},
} 
\]
provides the crossed product $\cro{B,\be}$ with the structure of a compact quantum metric space. Here, as usual, $\gamma$ denotes the grading on $H$ in the even case.
\end{theoremletter}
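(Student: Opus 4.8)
The plan is to derive this statement from Theorem~\ref{thm:intro-theorem-on-cqms-structure} by constructing the two derivations explicitly and then showing that the geometric semi-norm $L_\Ga$ is bi-Lipschitz equivalent to a semi-norm $L$ of the form appearing there, for a suitably chosen order-preserving norm $\nvert{\cd}$ on $C_c(\zz)$. Since that theorem guarantees that such an $L$ turns $\cro{B,\be}$ into a compact quantum metric space, and since two bi-Lipschitz equivalent Lip-norms induce bi-Lipschitz equivalent Monge--Kantorovi\v{c} metrics on the state space --- hence metrize the same $\T{weak}^*$-topology and share the total boundedness of the associated unit ball --- the compact quantum metric space structure then transfers to $L_\Ga$.

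First I would pin down the operators. The derivation $\pa_1$ is the one generated by the dual circle action, acting on a monomial by $\pa_1(b U^n) = n \cd b U^n$; it is a genuine $*$-derivation and, on a general $x \in C_c(\zz,\s B)$, one has $\pa_1(x) = \sum_n n\, x(n) U^n$, so that its norm reproduces verbatim the first term $\big\| \sum_n n\, x(n) U^n \big\|$ of the semi-norm in Theorem~\ref{thm:intro-theorem-on-cqms-structure}. For $\pa_\Ga$ I would build, out of the base Dirac operator $D$ and a two-sided geometric weight $\Ga$ on $\zz$ governed by the constants $\la,\mu \geq 1$ (roughly decaying like $\mu^{-n}$ for $n \geq 0$ and like $\la^{-|n|}$ for $n \leq 0$), a twisted derivation represented on $H \ot \ell^2(\zz)$ tensored with the parity datum. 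The role of the weight is exactly to absorb the growth of $L_B(\be^{\pm n}(b))$: the two hypotheses $\sup_n \mu^{-n} L_B(\be^n(b)) < \infty$ and $\sup_n \la^{-n} L_B(\be^{-n}(b)) < \infty$ are precisely what guarantees that $\pa_\Ga$ maps $C_c(\zz,\s B)$ into \emph{bounded} operators, so that $L_\Ga$ is finite. Verifying that $\pa_\Ga$ satisfies a twisted Leibniz rule, with twist automorphisms given by the relevant ratios of $\Ga$ evaluated on the number operator, and that it is compatible with the involution in the appropriate twisted sense, is a direct computation using $U b U^{-1} = \be(b)$ and $L_B(b^*) = L_B(b)$.

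The heart of the argument is the comparison estimate, and here the parity-dependent bookkeeping does the essential work: in the odd case the two operators $\pa_\Ga(x) \pm i \pa_1(x)$ live on the two chiral summands of the crossed-product spectral triple and anticommute up to lower order, so that the maximum of their norms simultaneously controls $\| \pa_1(x) \|$ and $\| \pa_\Ga(x) \|$ with no cancellation; in the even case the factor $(1 \ot \ga)$ plays the same decoupling role. This lets me read off the first term of $L$ from $\pa_1$, while $\| \pa_\Ga(x) \|$ is sandwiched between constant multiples of $\bvert{L_B \ci x}$ and $\bvert{L_B \ci x^*}$, where the order-preserving norm $\nvert{\cd}$ on $C_c(\zz)$ is the one induced by the weight $\Ga$ and the ambient operator norm on $H \ot \ell^2(\zz)$; that this $\nvert{\cd}$ is genuinely a norm and is order preserving (monotone in $0 \leq h(n) \leq k(n)$) must be checked in order to invoke Theorem~\ref{thm:intro-theorem-on-cqms-structure}. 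The passage between $x$ and $x^*$ uses the identity $x^*(n) = \be^n\big(x(-n)^*\big)$ together with the $*$-invariance of $L_B$ and the growth bounds.

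The main obstacle I anticipate is this last comparison, and specifically the lower bound $L \lesssim L_\Ga$: one must recover control of each of the three separate quantities $\big\|\sum_n n\, x(n) U^n\big\|$, $\bvert{L_B \ci x}$ and $\bvert{L_B \ci x^*}$ out of the single combined operator norm defining $L_\Ga$, showing that the $\pa_1$-contribution and the $\pa_\Ga$-contribution, and within the latter the $x$- and $x^*$-parts, cannot conspire to cancel. Establishing that the weight $\Ga$ is simultaneously strong enough to bound $\pa_\Ga$ and weak enough to retain this lower control --- i.e.\ that the single geometric weight dictated by $\la$ and $\mu$ threads both needles --- is the technical crux, and is where the precise growth hypotheses on $\be^{\pm n}$ are used in full.
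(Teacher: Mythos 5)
Your overall strategy --- build $\pa_1$ from the dual action, build $\pa_\Ga$ as a modularly rescaled twisted derivation, choose the weighted sup-norm $\nvert{h} = \sup_n c^{|n|}|h(n)|$ with $c$ determined by $\la,\mu$, and then compare $L_\Ga$ with the semi-norm $L$ of Theorem~\ref{thm:intro-theorem-on-cqms-structure} --- is exactly the route the paper takes in Theorem~\ref{cor:BMR-cor}, and the estimate you single out as the crux (extracting $\bvert{L_B\ci x}$, $\bvert{L_B\ci x^*}$ and $\|\pa_1(x)\|$ from the single operator norm $L_\Ga(x)$, with the parity datum preventing cancellation) is precisely the paper's inequality $c^{|n|}L_B(x(n)) \leq \|\pa_\Ga(x)\|_\infty$, proved by compressing $\pa_\Ga(x)U^{-n}$ onto the diagonal with the conditional expectation $\tau_0$.

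There is, however, a genuine gap in how you propose to transfer the compact quantum metric space property: you assert that $L_\Ga$ is \emph{bi-Lipschitz equivalent} to $L$, and in particular that $\|\pa_\Ga(x)\|_\infty$ is bounded \emph{above} by a constant multiple of $\max\{\bvert{L_B\ci x},\bvert{L_B\ci x^*}\}$. That upper bound does not follow from the hypotheses. The quasi-isometry condition only says that $\sup_{m}\ka(m)\,L_B(\be^{-m}(b))<\infty$ for each fixed $b\in\s B$; it gives no uniform estimate of the form $\sup_m \ka(m)L_B(\be^{-m}(b))\leq C\cdot L_B(b)$, and since $\s B$ need not be complete in the norm $\|\cd\|+L_B(\cd)$ (a point the paper stresses in the introduction) there is no closed-graph or uniform-boundedness principle available to upgrade pointwise finiteness to such a bound. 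Consequently the direction $L_\Ga\lesssim L$ may simply fail, and with it your justification via "bi-Lipschitz equivalent Monge--Kantorovi\v{c} metrics". Note also that the one-sided inequality $L\leq L_\Ga$ alone only yields $\rho_{L_\Ga}\leq C\,\rho_L$, which is the wrong direction for concluding directly that $\rho_{L_\Ga}$ metrizes the weak-$*$ topology. The repair is to abandon the metric-comparison transfer entirely and use criterion (3) of Theorem~\ref{t:quamet}: the single inequality $L(x)\leq L_\Ga(x)$ gives $\B B_1(L_\Ga)\su \B B_1(L)$, and since $(\cro{B},L)$ is a compact quantum metric space by Theorem~\ref{t:quacross}, the image $q(\B B_1(L))\su \cro{B}/\cc 1$ is totally bounded, hence so is $q(\B B_1(L_\Ga))$, which is all that is needed. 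With that substitution your argument coincides with the paper's proof.
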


Theorem \ref{t:twisted} contains a result of Bellissard, Marcolli, and Reihani as a special case corresponding to the equicontinuous setting where one may choose $\la = \mu = 1$, \cite[Proposition 3]{BMR:DSS}; see also \cite[Theorem 2.11]{HSWZ:STC}. In the general setting of Theorem \ref{t:twisted}, the candidate for a unital spectral triple provided by Bellissard, Marcolli, and Reihani, does not have bounded commutators with the elements in the coordinate algebra $C_c(\zz,\s B)$, and can therefore not be used to construct a reasonable seminorm on the crossed product algebra.
We resolve this problematic behaviour by ``rescaling'' the horizontal part of the geometry using powers of the constants $\la,\mu \geq 1$, and this rescaling procedure is in turn responsible for the twisting of the $*$-derivation $\pa_\Ga$. The rescaling procedure we consider here is inspired by the modular techniques developed in \cite{Kaa:UKM} in relation to the unbounded Kasparov product. In general it seems that only little is known on the structure of noncommutative geometries arising from a pair of unbounded selfadjoint operators, where one of them yields an ordinary derivation but the other one only yields a twisted derivation. The reader may, however, consult the paper \cite{KRS:RFH} where a similar structure is encountered. For other interesting approaches to the noncommutative geometry of crossed products (and many other things) we refer to \cite{PoWa:NGC,IoMa:CES,GMR:UTS} and of course \cite{CoMo:LIF,CoMo:TST,Mos:LIT}. These papers are not essential for the understanding of the present text but the references are included to put the present paper into a broader context. \\

Regarding Question \ref{que:second-question}, as far as the authors are aware, not much is known if we disregard the extensively studied noncommutative $2$-tori, which can indeed be viewed as crossed products by the integers. Notably, we have not even been able to find general results regarding the convergence problem for Lip-isometric actions. Our next main result fills this gap by providing an answer to Question \ref{que:second-question} in the Lip-isometric case.

\begin{theoremletter}[see Corollary \ref{c:genisomet}]\label{t:isometric-intro-theorem}
Assume that $(B,L_B)$ is a $C^*$-algebraic compact quantum metric space with $L_B$ lower semi-continuous on its domain $V_B$, and that $\{\beta_t\}_{t\in T}$ is a family of $*$-automorphisms of $B$ parametrized by a compact Hausdorff space $T$.
Suppose moreover that the map $t \mapsto \be_t(b)$ is continuous for all $b \in B$ and that the $*$-automorphisms $\be_t \colon B \to B$ satisfy $\beta_t(V_B)=V_B$ and are Lip-isometric in the sense that $L_B(\be_t(b)) = L_B(b)$ for all $b \in V_B$ and all $t \in T$. 
Then for any choice of order preserving norm $\vertiii{\cdot}$ on $ C_c(\zz)$, the seminorm $L_t$ provided by Theorem \ref{thm:intro-theorem-on-cqms-structure} turns $ \cro{B,\be_t}$ into a compact quantum metric space and the family
 $\big\{ ( \cro{B,\be_t}, L_t) \big\}_{t \in T}$ varies continuously in the quantum Gromov-Hausdorff distance.
\end{theoremletter}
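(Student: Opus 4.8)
The first assertion is immediate: for each fixed $t \in T$ the hypotheses of Theorem \ref{thm:intro-theorem-on-cqms-structure} hold, since $L_B$ is lower semi-continuous on $V_B$, $\be_t(V_B) = V_B$, and $\vertiii{\cdot}$ is an order preserving norm on $C_c(\zz)$, so $(\cro{B,\be_t}, L_t)$ is a $C^*$-algebraic compact quantum metric space. The whole content of the corollary is therefore the continuity statement, and the plan is to prove it by bounding the quantum Gromov-Hausdorff distance $\OO{dist}_q\big( (\cro{B,\be_t},L_t), (\cro{B,\be_{t_0}}, L_{t_0}) \big)$ and showing it tends to $0$ along any net $t \to t_0$.

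The first thing I would record is the simplification forced by the Lip-isometric hypothesis. Writing an element of the common coordinate space $C_c(\zz, V_B)$ as $x = \sum_n x(n) U^n$, the function $L_B \circ x$ does not involve $\be_t$ at all, while $(x^*)(n) = \be_t^n(x(-n)^*)$ gives $L_B\big((x^*)(n)\big) = L_B(x(-n))$ by Lip-isometry together with $L_B(b^*) = L_B(b)$. Hence both vertical terms $\vertiii{L_B \circ x}$ and $\vertiii{L_B \circ x^*}$ are \emph{independent of $t$} as functionals of the coordinate sequence, and the only $t$-dependence in $L_t$ sits in the horizontal term $\big\| \sum_n n x(n) U^n \big\|$, measured in the $C^*$-norm of $\cro{B,\be_t}$. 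The pointwise continuity of $t \mapsto \be_t(b)$ turns $\{ \cro{B,\be_t}\}_{t}$ into a continuous field of $C^*$-algebras, realised as the fibres of the single crossed product of $C(T,B)$ by $f \mapsto \big( t \mapsto \be_t(f(t)) \big)$, and in particular makes $t \mapsto \big\| \sum_n n x(n) U^n \big\|_{\cro{B,\be_t}}$ continuous for each fixed finitely supported $x$.

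To compare two fibres I would use the two-point restriction of this field, namely $\C C = \cro{B \oplus B, \be_s \oplus \be_{t_0}} = \cro{B,\be_s} \oplus \cro{B,\be_{t_0}}$, equipped with the bridging base seminorm $\widetilde L_B(b_1,b_2) = \max\{ L_B(b_1), L_B(b_2), \ep^{-1}\|b_1 - b_2\| \}$ on $V_B \oplus V_B$ and the same norm $\vertiii{\cdot}$. Since $\be_s \oplus \be_{t_0}$ preserves $V_B \oplus V_B$ and $\widetilde L_B$ is lower semi-continuous, Theorem \ref{thm:intro-theorem-on-cqms-structure} equips $\C C$ with a Lip-norm $\widetilde L_\ep$, and the two fibre evaluations $\pi_s,\pi_{t_0}$ realise $\cro{B,\be_s}$ and $\cro{B,\be_{t_0}}$ as quotients. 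A termwise computation (using order preservation and Lip-isometry) shows that the quotient seminorm of $\widetilde L_\ep$ along $\pi_s$ dominates $L_s$, while evaluating on the diagonal lift $y=x$ bounds it above by $L_s$ plus two error terms: a \emph{horizontal} error $\big|\, \| \sum n x(n) U^n\|_{s} - \|\sum n x(n) U^n\|_{t_0} \,\big|$ coming from the field, and an \emph{involution-drift} error $\ep^{-1}\vertiii{ n \mapsto \| \be_s^n(x(-n)^*) - \be_{t_0}^n(x(-n)^*)\| }$. Consequently $\OO{dist}_q$ is controlled by the Hausdorff distance of the pulled-back state spaces $\pi_s^*S(\cro{B,\be_s})$ and $\pi_{t_0}^* S(\cro{B,\be_{t_0}})$ inside $(S(\C C), \rho_{\widetilde L_\ep})$, together with these two errors.

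The main obstacle, and the heart of the proof, is that the involution-drift is \emph{not} uniform in $n$: for fixed $b$ the quantities $\|\be_s^n(b) - \be_{t_0}^n(b)\|$ need not tend to $0$ uniformly as $n$ grows, since $\be_s^n$ and $\be_{t_0}^n$ drift apart. The resolution is the degree penalty built into the horizontal term. Applying the canonical conditional expectation to $\sum_m m x(m) U^m$ yields $\| n x(n)\| \leq L_s(a)$, hence $\|x(n)\| \leq L_s(a)/|n|$, so the high-degree coordinates of any element of the Lip-ball are uniformly small; combined with the uniform finite-degree (Fej\'er) approximation on the Lip-ball established in the proof of Theorem \ref{thm:intro-theorem-on-cqms-structure}, this localises every estimate to finitely many degrees $|n| \leq N$. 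On this finite range the family $\{ s \mapsto \be_s^n \}_{|n| \leq N}$ is equicontinuous on the totally bounded set of admissible coordinates (using compactness of $T$ and pointwise continuity of $t \mapsto \be_t$), so both the horizontal error and the drift error tend to $0$ as $s \to t_0$; choosing the coupling parameter $\ep = \ep(s)$ tending to $0$ at a suitable rate balances the defect in admissibility against the Hausdorff distance of the two embedded state spaces. This forces $\OO{dist}_q\big((\cro{B,\be_s},L_s),(\cro{B,\be_{t_0}},L_{t_0})\big) \to 0$, and since $t_0 \in T$ was arbitrary the family varies continuously, as claimed.
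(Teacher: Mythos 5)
Your reduction of the problem is sound, and your two structural observations are exactly the right ones: in the Lip-isometric case the vertical terms $\vertiii{L_B\ci x}$ and $\vertiii{L_B\ci x^*}$ are independent of $t$, so only the horizontal term $\big\|\sum_n n\, x(n)U^n\big\|_t$ varies; and the degree penalty $|n|\cd\|x(n)\|\leq L_t(x)$ together with the Fej\'er-type truncation localises everything to a band $|n|\leq N$, where equicontinuity on totally bounded coordinate sets finishes the job. This matches the skeleton of the paper's argument: the corollary is there deduced from the general criterion of Theorem \ref{t:crosconv}, whose hypotheses $(2)$ and $(3)$ hold trivially under Lip-isometry, and Theorem \ref{t:crosconv} is proved by precisely this band reduction (Lemma \ref{lem:passage-to-band}). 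Where you genuinely diverge is in comparing two fibres on a fixed band: you build a Rieffel-style bridge $\widetilde L_\ep$ on $\cro{B,\be_s}\op\cro{B,\be_{t_0}}$ from the base bridge $\max\{L_B(b_1),L_B(b_2),\ep^{-1}\|b_1-b_2\|\}$, whereas the paper avoids admissible seminorms altogether: it bounds Li's distance $\T{dist}_{\T{oq}}^R$ by embedding all fibres isometrically into one Banach space and producing a single finite net of $\C D_R(V_t^\sa)$ valid for all $t$ near $t_0$ (Lemma \ref{l:ballclos}, Theorem \ref{t:GHconv}), then converts back via Proposition \ref{p:lipghd}.

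The gap sits in that comparison step. You concede that the quotient of $\widetilde L_\ep$ along $\pi_s$ is not $L_s$ but $L_s$ plus a horizontal error and a drift error carrying a factor $\ep^{-1}$; yet an admissible seminorm must have quotients \emph{exactly} $L_s$ and $L_{t_0}$ before the Hausdorff distance of the embedded state spaces bounds $\T{dist}_q$. The proposed remedy --- choosing $\ep=\ep(s)\to 0$ ``at a suitable rate'' --- does not resolve this: shrinking $\ep$ inflates the drift error, enlarging it inflates the Hausdorff distance of the state spaces, and no choice of $\ep$ makes $\widetilde L_\ep$ admissible. What is missing is a quantitative perturbation lemma of the form ``if $L$ and $L'$ are Lip-norms on the same order unit space with uniformly bounded radius and $L\leq L'\leq L+\de$ on $\B B_1(L)\cap \B B_R(\|\cd\|)$, then $\T{dist}_q$ is controlled by $\de$'', applied to $L_s$ and the true quotient seminorm of $\widetilde L_\ep$; such a statement is provable but is essentially the technical content that the paper's uniform criterion is engineered to supply, so it cannot be waved through. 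Until that lemma is stated and proved, and the drift error is shown to be $o(\ep)$ uniformly on the band (your equicontinuity argument gives smallness for fixed $\ep$, but it must then be decoupled from the $\ep^{-1}$ prefactor), the argument is incomplete.
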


In fact,  in Theorem \ref{t:crosconv} we provide a general criterion for convergence in quantum Gromov-Hausdorff distance which is formulated in terms of the fixed semi-norm $L_B \colon V_B \to [0,\infty)$ and the strongly continuous family of $*$-automorphisms $\be \colon T \to \T{Aut}(B)$, from which  Theorem \ref{t:isometric-intro-theorem} can be derived as a corollary. This more general criterion builds on  a somewhat delicate blend of uniformity criteria studied by Rieffel \cite{Rie:GHD}  and Li \cite{Li:GH-dist},  which we develop in Section \ref{s:unicri}, and applies also beyond the Lip-isometric case. As an illustration of this flexibility in our theory, we also obtain the following continuity result for non-Lip-isometric actions on Riemannian manifolds:

\begin{theoremletter}\label{t:riemannian-into-theorem}
Suppose that $M$ is a connected, compact, Riemannian manifold and equip $C(M)$ with the densely defined, lower semi-continuous semi-norm $L_{C(M)} \colon C^\infty(M) \to [0,\infty)$ given by $L_{C(M)}(f) := \sup_{p \in M} \| df(p) \|_\infty$, where $d \colon C^\infty(M) \to \Ga^\infty(T^* M)$ denotes the exterior derivative. Suppose, moreover, that $\psi \colon T \to \T{Diff}(M)$ is a family of diffeomorphisms of $M$, parametrized by a compact Hausdorff space $T$, which is continuous with respect to the Whitney $C^1$-topology, and that $\vertiii{\cd} : C_c(\zz) \to [0,\infty)$ is an order preserving norm. For each $t \in T$, define the $*$-automorphism of $C(M)$ by $\be_t(f) := f \ci \psi_t$ and denote by $L_t$ the densely defined semi-norm on $\cro{B,\beta_t}$ provided by Theorem \ref{thm:intro-theorem-on-cqms-structure}.  Then $\big\{(\cro{B,\be_t},L_t)\big\}_{t \in T}$ is a family of compact quantum metric spaces  which varies continuously in the quantum Gromov-Hausdorff distance.
\end{theoremletter}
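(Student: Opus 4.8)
The plan is to deduce the statement from the general convergence criterion of Theorem~\ref{t:crosconv} rather than from the Lip-isometric Theorem~\ref{t:isometric-intro-theorem}, since a diffeomorphism $\psi_t$ will in general distort the Riemannian metric, so that $\be_t$ need not be Lip-isometric. As a preliminary step I would record that $(C(M),L_{C(M)})$ is a $C^*$-algebraic compact quantum metric space: the metric on the state space dual to $L_{C(M)}$ restricts on point evaluations to the geodesic distance of $M$, whence it metrizes the weak-$*$ topology by the classical Monge--Kantorovi\v{c} picture, while $L_{C(M)}$ agrees with the geodesic Lipschitz constant and is therefore lower semi-continuous under uniform convergence. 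Since each $\psi_t$ is a diffeomorphism we have $\be_t(C^\infty(M)) = C^\infty(M)$, so that $V_B = C^\infty(M)$ is preserved; hence Theorem~\ref{thm:intro-theorem-on-cqms-structure} applies verbatim for each fixed $t \in T$ and endows every $\cro{B,\be_t}$ with the compact quantum metric space structure given by $L_t$. Note that this first conclusion uses only domain-preservation and no growth estimate at all.

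The geometric heart of the convergence statement is to control the growth of $L_{C(M)}$ under the iterates $\be_t^{\pm n}$ uniformly in $t$. Writing $\la_t := \sup_{p \in M} \| d\psi_t(p)\|$ and $\mu_t := \sup_{p \in M} \| d\psi_t^{-1}(p)\|$ for the operator norms of the differentials, the chain rule $d(f \ci \psi_t^n)(p) = df(\psi_t^n(p)) \ci d\psi_t^n(p)$ together with the cocycle identity $d\psi_t^n(p) = d\psi_t(\psi_t^{n-1}(p)) \ci \cdots \ci d\psi_t(p)$ yields
\[
L_{C(M)}(\be_t^n(f)) \leq \la_t^n \cd L_{C(M)}(f) \q \mbox{and} \q L_{C(M)}(\be_t^{-n}(f)) \leq \mu_t^n \cd L_{C(M)}(f)
\]
for all $n \in \nn_0$ and all $f \in C^\infty(M)$. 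Continuity of $t \mapsto \psi_t$ in the Whitney $C^1$-topology makes $t \mapsto \la_t$ continuous, and since $\T{Diff}(M)$ is a topological group in this topology the assignment $t \mapsto \psi_t^{-1}$ is $C^1$-continuous as well, so $t \mapsto \mu_t$ is continuous too. As $T$ is compact we obtain finite uniform constants $\la := \sup_t \la_t \geq 1$ and $\mu := \sup_t \mu_t \geq 1$ governing the forward and backward exponential growth of the horizontal part of the geometry simultaneously for every $t$, which is exactly the input needed to run the non-isometric version of the convergence machinery.

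It then remains to feed these data into Theorem~\ref{t:crosconv}. Strong continuity of $t \mapsto \be_t$ on all of $C(M)$ follows from the $C^0$-part of Whitney $C^1$-continuity combined with uniform continuity of each $f \in C(M)$ on the compact manifold $M$, and the uniform diameter and total-boundedness conditions I expect to follow from the uniform growth constants $\la,\mu$ above in the same manner as in the proof of Theorem~\ref{t:isometric-intro-theorem}, the latter being recovered in the degenerate case $\la = \mu = 1$. The main obstacle, and the step I would treat most carefully, is verifying the \emph{continuity} hypothesis of Theorem~\ref{t:crosconv}: this demands more than pointwise continuity of $t \mapsto \be_t$, namely a uniform control of the interaction between $\be_t$ and the semi-norm $L_{C(M)}$ as $t$ varies, and it is precisely here that the Whitney $C^1$-topology — rather than the weaker $C^0$-topology — is indispensable. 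Indeed $C^0$-convergence of $\psi_t$ gives no control whatsoever on the differentials, so the Lip-norm distortion could jump, whereas the $C^1$-topology is the coarsest topology on $\T{Diff}(M)$ rendering $t \mapsto d\psi_t$ continuous and hence making the relevant $t$-dependent Lip-norm quantities vary continuously, uniformly over the compact base $T$.
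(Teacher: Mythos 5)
Your overall strategy is the right one and matches the paper's: establish that $(C(M),L_{C(M)})$ is a compact quantum metric space with $L_{C(M)}$ equal to the geodesic Lipschitz constant (hence lower semi-continuous), note that each $\be_t$ preserves $C^\infty(M)$ so that Theorem~\ref{t:quacross} applies fibrewise, and then feed everything into the general criterion of Theorem~\ref{t:crosconv}. However, the technical heart of the theorem is missing, and the material you supply in its place does not substitute for it. Conditions $(2)$ and $(3)$ of Theorem~\ref{t:crosconv} are, for each \emph{fixed} $n$, statements about the behaviour of $t \mapsto L_{C(M)}(\be_t^n(f))$ \emph{near a point} $t_0$: upper semi-continuity, and the existence of a neighbourhood on which $(1-\ep)L_{C(M)}(\be_{t_0}^n(f)) \leq L_{C(M)}(\be_t^n(f))$ uniformly in $f$ --- equivalently, by Lemma~\ref{lem:operator-semi-norm-cont}, that the operator semi-norms $\|\be_{t_0}^n \ci \be_t^{-n}\|_{L_{C(M)}}$ tend to $1 = \|\T{id}\|_{L_{C(M)}}$ as $t \to t_0$. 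Your uniform exponential growth constants $\la = \sup_t \la_t$ and $\mu = \sup_t \mu_t$ give only the bound $\|\be_t^{\pm n}\|_{L_{C(M)}} \leq \la_t^n$ (resp.\ $\mu_t^n$), uniformly over $T$; such a bound neither implies upper semi-continuity of $t \mapsto L_{C(M)}(\be_t^n(f))$ nor forces $\|\be_{t_0}^n\ci\be_t^{-n}\|_{L_{C(M)}}$ to approach $1$. (Uniform growth in $n$ is in any case not needed: the convergence proof reduces to bands of bounded Fourier support via Lemma~\ref{lem:passage-to-band}, so only finitely many $n$ occur at a time; nor does Theorem~\ref{t:crosconv} have a ``uniform total-boundedness'' hypothesis --- the radius bound $r_t \leq r_B + \pi/2$ is automatic from Theorem~\ref{t:quacross}.)

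The step you defer as ``the one I would treat most carefully'' is precisely the content of the paper's Proposition~\ref{p:whitcont}: Whitney $C^1$-continuity of $t \mapsto \psi_t$ implies continuity of $t \mapsto \|\be_t\|_{L_{C(M)}}$. This is not a formal consequence of ``$C^1$ is the coarsest topology making $t \mapsto d\psi_t$ continuous''; it requires a genuine argument --- in the paper, a local-coordinate estimate comparing $\|d(f\ci\psi_t)(\psi_t^{-1}(q))\|_\infty$ with $\|d(f\ci\psi_{t_0})(\psi_{t_0}^{-1}(q))\|_\infty$ uniformly over $q$ and over the unit ball $\B B_1(L_{C(M)})$, using unitary trivializations $\Phi_U,\Phi_V$, Jacobians, and uniform continuity on compacta, with the uniformity in $f$ being the delicate point. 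Once that proposition is in hand, one still has to combine it with the fact that $\T{Diff}(M)$ is a topological group in the $C^1$-topology (so that $t \mapsto \psi_t^{-n}\ci\psi_{t_0}^{n}$ is continuous and $\be_{t_0,t}^n = \be_{\psi_t^{-n}\ci\psi_{t_0}^n}$) and with Lemma~\ref{lem:operator-semi-norm-cont} to extract conditions $(2)$ and $(3)$. As written, your proposal identifies where the difficulty lies but does not resolve it, and the quantitative estimates you do provide point toward the quasi-isometry framework of Theorem~\ref{t:twisted} (for which the convergence problem is explicitly left open in the paper) rather than toward the hypotheses of Theorem~\ref{t:crosconv}.
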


We emphasize that we are considering Rieffel's original notion of quantum Gromov-Hausdorff distance since our compact quantum metric spaces may not satisfy the Leibniz inequality and therefore do not fit within Latr\'emoli\`ere's framework, this is for example the case for the Lip-norms arising in Theorem \ref{t:twisted} for $\la$ or $\mu > 1$. 
However, as pointed out to us by Latr\'emoli\`ere, one may enlarge the semi-norm so that it also controls the difference between the identity and the twists,  thereby forcing it to satisfy a quasi-Leibniz inequality; see e.g.~\cite{Lat:Compactness-theorem}. It seems to be an interesting problem to extend Latr\'emoli\`ere's ideas on quantum distances so that they can apply directly to the kind of compact quantum metric spaces arising from the combinations of $*$-derivations and twisted $*$-derivations considered in Theorem \ref{t:twisted}. This kind of question does however lie beyond the scope of the present text.
It also remains an open problem to investigate convergence in the quantum Gromov-Hausdorff distance with respect to the semi-norms described in Theorem \ref{t:twisted}. This seems to be a rather challenging task from an analytic perspective. \\

Let us end this introduction by saying a few words on the techniques that we apply in order to prove our main theorems. Theorem \ref{thm:intro-theorem-on-cqms-structure} and \ref{t:twisted}, concerning compact quantum metric space structures on crossed products, are derived using a general method which is due to Li, \cite{Li:DCQ}, relating to earlier results on ergodic actions by Rieffel, \cite{Rie:MSA,Rie:GHD}. The proofs of Theorem \ref{t:isometric-intro-theorem} and \ref{t:riemannian-into-theorem}, regarding  convergence in the quantum Gromov-Hausdorff distance, are inspired by ideas of Li on continuous fields of compact quantum metric spaces. As already indicated above, our general criterion, Theorem \ref{t:GHconv}, for convergence is related to (but different from) Li's uniform criterion for convergence, which can be found in \cite{Li:GH-dist}. In fact, we are in some sense mixing parts of Li's uniform criterion with ideas of Rieffel applied to finite dimensional compact quantum metric spaces in \cite{Rie:GHD}.

\subsection*{Acknowledgments}
The authors gratefully acknowledge the financial support from  the Villum Foundation (grant no.~7423) and from the Independent Research Fund Denmark (grant no.~7014-00145B and grant no.~9040-00107B). The authors would also like to thank Fr{\'e}d{\'e}ric Latr\'emoli\`ere and Hanfeng Li for their illuminating remarks on an earlier version of the paper. Finally, we would like to thank the anonymous referee for many nice comments improving the quality of the paper.

\section{Preliminaries on compact quantum metric spaces}
Let $A$ be a unital $C^*$-algebra and consider a complex vector subspace $V \su A$ with $1 \in V$ and with $\xi^* \in V$ for all $\xi \in V$. {We emphasize that $V \su A$ need not be dense in norm.} The real part of $V$ is denoted by
\[
V^{\T{sa}} := \{ \xi \in V \mid \xi = \xi^* \}  \su A^{\T{sa}} .
\]
The partial order $\leq$ on $A^{\T{sa}}$ induces a partial order $\leq$ on $V^{\T{sa}}$ and this partial order together with the unit $1 \in V^{\T{sa}}$ provides $V^{\T{sa}}$ with the structure of an \emph{order unit space}, see \cite{Kad:RTC},  and the order unit norm agrees with the norm inherited from the $C^*$-algebra $A$. The \emph{state space} of $V^{\T{sa}}$ is defined by
\[
S(V^{\T{sa}}) := \{ \mu : V^{\T{sa}} \to \rr \mid \mu \T{ is $\rr$-linear and positive with } \mu(1) = 1 \} .
\]
The state space is equipped with the weak $*$-topology (i.e.~the topology of pointwise convergence) and is a compact topological space with respect to this topology. We remark that the theory of order unit spaces can be developed from an abstract point of view, but in the applications we have in mind the order unit spaces will always be concrete in the sense that they are realised inside a specific unital $C^*$-algebra. Consider now a semi-norm $L^{\T{sa}} \colon  V^{\T{sa}} \to [0,\infty)$ with $L^{\T{sa}}(1) = 0$ and define the map $\rho_{L^{\T{sa}}} \colon  S(V^{\T{sa}}) \ti S(V^{\T{sa}}) \to [0,\infty]$ by
\[
\rho_{L^{\T{sa}}}(\mu,\nu) := \sup\{ | \mu(\xi) - \nu(\xi) | \mid \xi \in V^{\T{sa}}, L^{\T{sa}}(\xi) \leq 1 \} .
\]
The map $\rho_{L^{\T{sa}}}$ satisfies all the requirements of a metric on $S(V^{\T{sa}})$ except that we allow $\rho_{L^{\T{sa}}}$ to take the value $\infty$. With the notation just introduced, we are now able to present Rieffel's definition of a non-commutative analogue of a compact metric space:

\begin{dfn}\label{d:oucpms}
The pair $(V^{\T{sa}},L^{\T{sa}})$ is called an \emph{order unit compact quantum metric space}  if  $L^{\T{sa}}(1) = 0$ and $\rho_{L^{\T{sa}}} \colon  S(V^{\T{sa}}) \ti S(V^{\T{sa}}) \to [0,\infty]$ metrizes the weak $*$-topology on $S(V^{\T{sa}})$.
\end{dfn}
Note that since $S(V^{\T{sa}})$ is weak $*$-compact { and connected}, it is implicitly contained in the definition of an order unit compact quantum metric space that the metric $\rho_{L^{\T{sa}}}$ is actually finite. It is usually part of the definition of an order unit compact quantum metric space that $\T{Ker}(L^{\T{sa}}) = \rr 1$, see for example \cite[Definition 2.2]{Rie:GHD}. We now argue that it suffices to assume that $\rr 1 \su \T{Ker}(L^{\T{sa}})$, which is indeed part of our  assumptions on an order unit compact quantum metric space. 

\begin{lemma}\label{l:qmsker}
Suppose that $(V^{\T{sa}},L^{\T{sa}})$ is an order unit compact quantum metric space, then $\T{Ker}(L^{\T{sa}}) = \rr 1$. 
\end{lemma}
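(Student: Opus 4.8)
The plan is to exploit the fact that, being a metric that metrizes a compact topology, $\rho_{L^{\T{sa}}}$ is finite on all of $S(V^{\T{sa}}) \ti S(V^{\T{sa}})$ (as already observed just after Definition \ref{d:oucpms}), and to combine this with the homogeneity of the semi-norm and the fact that states separate the points of an order unit space. Since $L^{\T{sa}}(1) = 0$ gives the inclusion $\rr 1 \su \T{Ker}(L^{\T{sa}})$ for free, only the reverse inclusion needs an argument.

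First I would take $\xi \in \T{Ker}(L^{\T{sa}})$ and show that every state assigns the same value to $\xi$. Because $L^{\T{sa}}(t \xi) = |t| \cd L^{\T{sa}}(\xi) = 0 \leq 1$ for every $t > 0$, the definition of $\rho_{L^{\T{sa}}}$ yields
\[
t \cd | \mu(\xi) - \nu(\xi) | = | \mu(t \xi) - \nu(t \xi) | \leq \rho_{L^{\T{sa}}}(\mu,\nu)
\]
for all $\mu,\nu \in S(V^{\T{sa}})$ and all $t > 0$. As $\rho_{L^{\T{sa}}}(\mu,\nu)$ is finite, letting $t \to \infty$ forces $\mu(\xi) = \nu(\xi)$. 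Hence there is a single real number $c := \mu(\xi)$, independent of the state $\mu$.

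Next I would upgrade this to the identity $\xi = c \cd 1$. Consider $\eta := \xi - c \cd 1 \in V^{\T{sa}}$. By construction $\mu(\eta) = \mu(\xi) - c \cd \mu(1) = c - c = 0$ for every $\mu \in S(V^{\T{sa}})$. Now I would invoke the standard fact about order unit spaces that the order unit norm is recovered from the state space via $\| \eta \| = \sup\{ |\mu(\eta)| : \mu \in S(V^{\T{sa}}) \}$ (equivalently, that the states separate the points of $V^{\T{sa}}$); since the order unit norm agrees with the $C^*$-norm inherited from $A$, this gives $\| \eta \| = 0$, so $\eta = 0$ and $\xi = c \cd 1 \in \rr 1$, as desired.

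The substance of the argument is light; the only point requiring care is the final step, where one must know that the state space of the order unit space $V^{\T{sa}}$ is large enough to separate points and to compute the norm. This is precisely Kadison's representation theorem for order unit spaces, and in the concrete situation at hand it also follows from the Hahn--Banach separation property in the ambient $C^*$-algebra $A$ (every state of $V^{\T{sa}}$ extends to a state of $A$, and states of $A$ compute the $C^*$-norm on self-adjoint elements). The finiteness of $\rho_{L^{\T{sa}}}$, which powers the scaling step, is guaranteed by the hypothesis that it metrizes the weak $*$-topology on the compact space $S(V^{\T{sa}})$.
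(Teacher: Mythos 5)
Your proof is correct and rests on the same two pillars as the paper's own argument: the finiteness of $\rho_{L^{\T{sa}}}$ forced by weak $*$-compactness of the state space, and the scaling inequality $t \cd |\mu(\xi) - \nu(\xi)| \leq \rho_{L^{\T{sa}}}(\mu,\nu)$ for $\xi$ in the kernel and $t > 0$. The only difference is organizational: the paper argues by contradiction, using the fact that two states of the ambient $C^*$-algebra $A$ must disagree on an element outside $\cc 1$ to force $\rho_{L^{\T{sa}}} = \infty$, whereas you argue directly and close with Kadison's representation theorem for order unit spaces --- a cosmetic reshuffling of the same ingredients.
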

\begin{proof}
Since $L^{\T{sa}}(1) = 0$ by assumption we have that $\rr 1 \su \T{Ker}(L^{\T{sa}})$. Suppose now, for contradiction, that $(V^{\T{sa}},L^{\T{sa}})$ is a compact quantum metric space and that we have an element $x \in \T{Ker}(L^{\T{sa}}) \su V^{\T{sa}}$ with $x \notin \rr 1$. Since the state space $S(V^{\T{sa}})$ is compact in the weak $*$-topology we must have that the metric $\rho_{L^{\T{sa}}} \colon  S(V^{\T{sa}}) \ti S(V^{\T{sa}}) \to [0,\infty]$ is bounded so that there exists a $d \in [0,\infty)$ with $\rho_{L^{\T{sa}}}(\mu,\nu) \leq d$ for all $\mu,\nu \in S(V^{\T{sa}})$. But since $x \notin \cc 1$ (because $x = x^*$ and $x \notin \rr 1$) we can find two states $\mu_0,\nu_0 : A \to \cc$ such that $\mu_0(x) \neq \nu_0(x)$, see \cite[Theorem 4.3.4(i)]{KaRi:FTO}. This implies that $\rho_{L^{\T{sa}}}(\mu_0|_{V^{\T{sa}}},\nu_0|_{V^{\T{sa}}}) = \infty$ which contradicts the inequality $\rho_{L^{\T{sa}}}(\mu_0|_{V^{\T{sa}}},\nu_0|_{V^{\T{sa}}}) \leq d$. 
\end{proof}

We remark that the above proof only uses the assumption that the metric $\rho_{L^{\T{sa}}} \colon  S(V^{\T{sa}}) \ti S(V^{\T{sa}}) \to [0,\infty]$ is bounded.

\begin{dfn}
Suppose that $(V^{\T{sa}},L^{\T{sa}})$ is an {order unit compact quantum metric space}. We define the \emph{radius} of $(V^{\T{sa}},L^{\T{sa}})$ as the radius of the state space $S(V^{\T{sa}})$ with respect to the metric $\rho_{L^{\T{sa}}}$. To wit, this is the quantity 
\[
\frac{1}{2}\sup\big\{ \rho_{L^{\T{sa}}}( \mu,\nu) \mid \mu,\nu \in S(V^{\T{sa}}) \big\} .
\]
\end{dfn}

The next result can be found as \cite[Proposition 2.2]{Rie:MSS}.

\begin{prop}\label{p:radbou}
Let $r \geq 0$ and suppose that $\T{Ker}(L^{\T{sa}}) = \rr 1$. The following conditions are equivalent:
\begin{enumerate}
\item $\rho_{L^{\T{sa}}}(\mu,\nu) \leq 2r$ for all $\mu,\nu \in S(V^{\T{sa}})$;
\item $\inf_{\la \in \rr} \| \xi + \la 1 \| \leq r \cd L^{\T{sa}}(\xi)$ for all $\xi \in V^{\T{sa}}$.
\end{enumerate}
\end{prop}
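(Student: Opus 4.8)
The plan is to translate both conditions into a single family of inequalities on the state space by exploiting the duality between the order unit norm and the states, after which the equivalence becomes pure homogeneity. The first step I would carry out is the norm formula $\|\eta\| = \sup_{\mu \in S(V^{\T{sa}})}|\mu(\eta)|$ for $\eta \in V^{\T{sa}}$. The bound $\sup_\mu|\mu(\eta)| \leq \|\eta\|$ is immediate from the order unit norm, since $-\la 1 \leq \eta \leq \la 1$ forces $|\mu(\eta)| \leq \la$ for every state $\mu$. For the reverse bound I would invoke that the order unit norm on $V^{\T{sa}}$ coincides with the $C^*$-norm of $A$, together with the classical identity $\|\eta\| = \sup\{|\tilde\mu(\eta)| : \tilde\mu \T{ a state of } A\}$ for self-adjoint $\eta$ (cf.~\cite[Theorem 4.3.4]{KaRi:FTO}); as the restriction to $V^{\T{sa}}$ of any state of $A$ lies in $S(V^{\T{sa}})$, the supremum only grows when enlarging the index set to $S(V^{\T{sa}})$, and combining the two bounds gives equality. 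Note that this argument uses only that states of $A$ \emph{restrict} to $S(V^{\T{sa}})$, so no completeness of $V^{\T{sa}}$ is needed.

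Next I would turn this into a formula for the quotient norm. Applying the norm formula to $\eta = \xi + \la 1$ and using $\mu(1) = 1$ yields $\|\xi + \la 1\| = \max\{M + \la,\, -(m+\la)\}$, where $M := \max_\mu \mu(\xi)$ and $m := \min_\nu \nu(\xi)$; both extrema are attained because $S(V^{\T{sa}})$ is weak-$*$ compact and $\mu \mapsto \mu(\xi)$ is continuous. Minimizing this piecewise-linear function of $\la$ is an elementary one-variable computation with optimal value $(M-m)/2$ attained at $\la = -(M+m)/2$, giving the key identity
\[
\inf_{\la \in \rr}\|\xi + \la 1\| = \frac{1}{2}(M - m) = \frac{1}{2}\sup_{\mu,\nu \in S(V^{\T{sa}})}|\mu(\xi) - \nu(\xi)|, \qquad \xi \in V^{\T{sa}}.
\]

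With this identity, condition (2) is equivalent to the inequality $|\mu(\xi) - \nu(\xi)| \leq 2r \cd L^{\T{sa}}(\xi)$ holding for all $\xi \in V^{\T{sa}}$ and all $\mu,\nu \in S(V^{\T{sa}})$, whereas condition (1) is the same inequality restricted to those $\xi$ with $L^{\T{sa}}(\xi) \leq 1$. Thus (2)$\Rightarrow$(1) is trivial. For (1)$\Rightarrow$(2) I would fix $\xi$ and $\mu,\nu$: when $L^{\T{sa}}(\xi) > 0$, rescaling to $\xi/L^{\T{sa}}(\xi)$ reduces matters to the normalized case and multiplying back recovers the claim by homogeneity of both sides; when $L^{\T{sa}}(\xi) = 0$, the hypothesis $\T{Ker}(L^{\T{sa}}) = \rr 1$ forces $\xi \in \rr 1$, so $\mu(\xi) = \nu(\xi)$ and the inequality is automatic --- this is the only place the kernel assumption enters. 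The one genuinely delicate point is the norm formula of the first step in the absence of completeness of $V^{\T{sa}}$; the resolution is exactly to import the $C^*$-algebraic formula through restriction of states rather than through a Hahn--Banach extension that would need completeness. Everything after that is the elementary minimization over $\la$ and routine homogeneity bookkeeping.
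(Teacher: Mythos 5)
Your proof is correct, and the paper does not actually supply an argument here --- it simply quotes the statement from \cite[Proposition 2.2]{Rie:MSS}. Your route (the Kadison-type norm formula $\|\eta\|=\sup_{\mu}|\mu(\eta)|$ via restriction of states of $A$, the resulting identity $\inf_{\la}\|\xi+\la 1\|=\tfrac12\sup_{\mu,\nu}|\mu(\xi)-\nu(\xi)|$, and then homogeneity plus the kernel hypothesis to handle $L^{\T{sa}}(\xi)=0$) is essentially the standard argument given in that reference, and all the steps check out in the concrete order unit setting used in this paper.
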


Suppose now that $L^{\T{sa}} \colon  V^{\T{sa}} \to [0,\infty)$ is the restriction of a semi-norm $L \colon  V \to \cc$ satisfying that $L(\xi) = L(\xi^*)$ for all $\xi \in V$. Suppose moreover that $V \su A$ is dense in $C^*$-norm.
We define a map $\rho_L \colon  S(A) \ti S(A) \to [0,\infty]$ on the state space $S(A)$ of $A$ by
\[
\rho_L(\mu,\nu) := \sup \{ | \mu(\xi) - \nu(\xi) | \mid \xi \in V, L(\xi) \leq 1 \} .
\]
As above, $\rho_L$ satisfies all the requirements of a metric except that $\rho_L$ may take the value $\infty$. Since the involution is an isometry for the semi-norm $L \colon  V \to [0,\infty)$ we have that
\[
\rho_L(\mu,\nu) = \sup \{ | \mu(\xi) - \nu(\xi) | \mid \xi \in V^{\T{sa}} , L(\xi) \leq 1 \} = \rho_{L^{\T{sa}}}(\mu|_{V^{\T{sa}}}, \nu|_{V^{\T{sa}}}).
\]
Indeed, for $\xi \in V$ with $L(\xi) \leq 1$ and for $\mu,\nu \in S(A)$, choose a $\la \in S^1$ such that $\la \cd (\mu(\xi) - \nu( \xi) ) \in \rr$. Then it holds that $\T{Re}(\la \cd \xi) = (\la \cd \xi + \ov{\la} \cd \xi^*) / 2 \in V^{\T{sa}}$, that $L( \T{Re}(\la \xi)) \leq 1$ and that
\[
| \mu( \T{Re}(\la \cd \xi)) - \nu( \T{Re}(\la \cd \xi)) | = | \mu( \la \cd \xi) - \nu( \la \cd \xi)| = |\mu(\xi) - \nu(\xi)| .
\]

The next definition coincides with Li's definition of a $C^*$-algebraic compact quantum metric space, see for example \cite[Definition 2.3]{Li:ECQ}.


\begin{dfn}\label{d:cqms}
Suppose that $L(\xi) = L(\xi^*)$ for all $\xi \in V$, that $L(1) = 0$, and that the $*$-invariant unital subspace $V \su A$ is dense in $C^*$-norm. We then say that the pair $(A,L)$ is a \emph{compact quantum metric space} when the metric $\rho_L$ metrizes the weak $*$-topology on $S(A)$.
\end{dfn}

Notice that the proof of Lemma \ref{l:qmsker} can be adapted to the complex setting showing that if $(A,L)$ is a compact quantum metric space, then $\T{Ker}(L) = \cc 1$.
We denote the quotient map by $q \colon  A \to A / \cc 1$ and the quotient norm by $\| \cd \|_{A/ \cc 1} \colon  A / \cc 1 \to [0,\infty)$. We apply the same notation for the quotient map on the selfadjoint part $q \colon  A^{\T{sa}} \to A^{\T{sa}} / \rr 1$ and on this quotient space we have the quotient norm $\| \cd \|_{A^{\T{sa}} / \rr 1}$. For each $R \geq 0$, we define
\begin{equation}\label{eq:balls}
\begin{split}
 \B B_R(L) &:= \{ \xi \in V \mid L(\xi) \leq R \}, \\ 
 \B B_R(L^{\T{sa}}) &:= {\B B_R(L)} \cap A^{\T{sa}} := \{ \xi \in V^{\T{sa}} \mid L^{\T{sa}}(\xi) \leq R \} .
\end{split}
\end{equation}
Before stating the next result, we remind the  reader that a subset of a metric space is said to be \emph{totally bounded} if it can be covered by finitely many $\ep$-balls for any $\ep>0$, and that this notion is equivalent with having compact closure when the ambient metric space is complete.

\begin{lemma}\label{l:totker}
If $q(\B B_1(L) ) \su A / \cc 1$ is totally bounded with respect to the quotient norm $\| \cd \|_{A / \cc 1} \colon  A / \cc 1 \to [0,\infty)$, then $\T{Ker}(L) = \cc 1$. A similar result holds with $L$ replaced by $L^{\T{sa}}$ and $\cc 1$ replaced by $\rr 1$.
\end{lemma}
\begin{proof}
We only give the proof in the complex case, the real case being similar. Since $L(1) = 0$ by assumption we have that $\cc 1 \su \T{Ker}(L)$. We now proceed by contraposition. So suppose that $x \in \T{Ker}(L)$ and that $x \notin \cc 1$. Then we have an isometry $\io \colon  \cc \to q(\B B_1(L) )$ defined by $\io(\la) = \la \cd q(x) / \| q(x) \|_{A / \cc 1}$ for all $\la \in \cc$. This implies that $q(\B B_1(L) )$ is not totally bounded.
\end{proof}

Using Lemma \ref{l:qmsker} and Lemma \ref{l:totker}, we may now quote the following result from \cite[Theorem 1.8]{Rie:MSA}:

\begin{thm}\label{t:quamet}
Suppose that $L(\xi) = L(\xi^*)$ for all $\xi \in V$, that $L(1) = 0$ and that the $*$-invariant unital complex subspace $V \su A$ is dense in $C^*$-norm. Then the following statements are equivalent:
\begin{enumerate}
\item $(A,L)$ is a compact quantum metric space;
\item $(V^{\T{sa}},L^{\T{sa}})$ is an order unit compact quantum metric space;
\item $q(\B B_1(L) ) \su A / \cc 1$ is totally bounded with respect to the quotient norm $\| \cd \|_{A/\cc 1}$;
\item  $q(\B B_1(L^{\T{sa}})) \su A^{\T{sa}} / \rr 1$ is totally bounded with respect to the quotient norm $\| \cd \|_{A^{\T{sa}}/\rr 1}$.
\end{enumerate}
\end{thm}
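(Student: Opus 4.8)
The plan is to establish the cycle of equivalences by proving $(1) \lrar (2)$, $(2) \lrar (4)$ and $(3) \lrar (4)$ separately. The genuine analytic content sits in $(2) \lrar (4)$, which is Rieffel's metrizability criterion for order unit spaces, and this is where I expect the main obstacle; the other two equivalences are ``self-adjoint versus complex'' translations. Throughout I would use that the kernel condition $\T{Ker}(L^{\T{sa}}) = \rr 1$ is at our disposal: under $(2)$ it is supplied by Lemma \ref{l:qmsker}, and under $(4)$ by Lemma \ref{l:totker}, so that Proposition \ref{p:radbou} may be invoked in both situations.

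For $(1) \lrar (2)$, I would analyse the restriction map $r \colon S(A) \to S(V^{\T{sa}})$, $\mu \mapsto \mu|_{V^{\T{sa}}}$. It is weak-$*$ continuous, injective since $V^{\T{sa}}$ is norm-dense in $A^{\T{sa}}$ and a state is determined by its self-adjoint values, and surjective since a state of the order unit space $V^{\T{sa}}$ extends to a positive unital functional on $A^{\T{sa}}$ and then complexifies. As $S(A)$ and $S(V^{\T{sa}})$ are both weak-$*$ compact Hausdorff, $r$ is a homeomorphism, and the displayed identity $\rho_L(\mu,\nu) = \rho_{L^{\T{sa}}}(\mu|_{V^{\T{sa}}}, \nu|_{V^{\T{sa}}})$ then shows that $\rho_L$ metrizes the weak-$*$ topology on $S(A)$ precisely when $\rho_{L^{\T{sa}}}$ does so on $S(V^{\T{sa}})$.

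The heart of the matter is $(2) \lrar (4)$. For $(2) \rar (4)$ I would use that weak-$*$ compactness of $S(V^{\T{sa}})$ forces a finite radius, and pass to the norm-preserving assignment $q(\xi) \mapsto [\,\mu \mapsto \mu(\xi)\,]$ of $A^{\T{sa}}/\rr 1$ into $C(S(V^{\T{sa}}))$ modulo constants, under which $\|q(\xi)\|_{A^{\T{sa}}/\rr 1}$ equals half the oscillation of $\mu \mapsto \mu(\xi)$. For $\xi \in \B B_1(L^{\T{sa}})$ these functions are $1$-Lipschitz for $\rho_{L^{\T{sa}}}$ and, after normalising at a basepoint, uniformly bounded, so Arzel\`a--Ascoli on the compact space $(S(V^{\T{sa}}), \rho_{L^{\T{sa}}})$ yields total boundedness, which transports back to $q(\B B_1(L^{\T{sa}}))$. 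For $(4) \rar (2)$, total boundedness first gives boundedness, hence via Proposition \ref{p:radbou} a finite radius and a genuine metric $\rho_{L^{\T{sa}}}$; its topology is automatically finer than the weak-$*$ one, and the reverse inclusion is the delicate step: given a finite $\ep/3$-net $q(\xi_1), \dots, q(\xi_n)$ of $q(\B B_1(L^{\T{sa}}))$, a three-term estimate (using that states agree on $\rr 1$) upgrades weak-$*$ convergence to uniform convergence over $\B B_1(L^{\T{sa}})$, i.e. to $\rho_{L^{\T{sa}}}$-convergence, whereupon a continuous bijection from a compact space to a Hausdorff space finishes the argument.

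Finally, for $(3) \lrar (4)$ I would exploit the decomposition $\xi = \T{Re}(\xi) + i\, \T{Im}(\xi)$, where $*$-invariance of $V$ and the hypothesis $L(\xi) = L(\xi^*)$ give $\T{Re}(\xi), \T{Im}(\xi) \in \B B_1(L^{\T{sa}})$ whenever $\xi \in \B B_1(L)$. The bounded real-linear map $(q(a), q(b)) \mapsto q(a + ib)$ from $A^{\T{sa}}/\rr 1 \oplus A^{\T{sa}}/\rr 1$ to $A/\cc 1$ carries the totally bounded set $q(\B B_1(L^{\T{sa}})) \ti q(\B B_1(L^{\T{sa}}))$ onto a set containing $q(\B B_1(L))$, giving $(4) \rar (3)$. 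Conversely, the real-part projection descends to a contraction $A/\cc 1 \to A^{\T{sa}}/\rr 1$ under which the image of $q(\B B_1(L))$ contains $q(\B B_1(L^{\T{sa}}))$, so total boundedness of the former passes to the latter, giving $(3) \rar (4)$ and completing the cycle.
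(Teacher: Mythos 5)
Your proposal is correct, but note that the paper does not actually prove this theorem: it is quoted from Rieffel (\cite[Theorem 1.8]{Rie:MSA}), with Lemma \ref{l:qmsker} and Lemma \ref{l:totker} supplied beforehand only to reconcile the kernel hypotheses, since $\T{Ker}(L^{\T{sa}}) = \rr 1$ is part of Rieffel's standing assumptions but not of the definitions used here. What you have written is essentially a self-contained reconstruction of Rieffel's argument: the equivalence $(2) \lrar (4)$ via the function representation $\xi \mapsto \hat{\xi}$ on $S(V^{\T{sa}})$, Arzel\`a--Ascoli for the forward direction, and the $\ep/3$-net upgrade of weak-$*$ convergence to $\rho_{L^{\T{sa}}}$-convergence for the converse, is exactly the content of his total-boundedness criterion; your treatment of the translations $(1)\lrar(2)$ and $(3)\lrar(4)$ (restriction of states as a weak-$*$ homeomorphism $S(A) \to S(V^{\T{sa}})$, and the decomposition $\xi = \T{Re}(\xi) + i\,\T{Im}(\xi)$ exploiting $L(\xi) = L(\xi^*)$) matches how these reductions are handled in the discussion preceding Definition \ref{d:cqms} and in \cite{Li:ECQ}. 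The citation buys brevity; your version buys self-containedness and correctly locates the one point requiring care, namely that the kernel condition must be secured (via Lemma \ref{l:totker}) before Proposition \ref{p:radbou} can be invoked in the implication $(4)\Rightarrow(2)$. I see no gaps.
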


In what follows we will almost always, and often without reference, be using the criteria (2) and (3) when arguing that  something is a compact quantum metric space.

\section{Quantum Gromov-Hausdorff distance}\label{s:ghdist}
In this section, we recall Rieffel's and Li's notions of distance between order unit compact quantum metric spaces introduced in \cite{Rie:GHD} and \cite{Li:GH-dist}, respectively. Both notions build on the following classical notion of Hausdorff distance:

\begin{dfn}\label{d:hausdorff}
For a (not necessarily compact) metric space $(Z,\rho)$ and two totally bounded subsets $X, Y \su Z$,  the \emph{Hausdorff distance} $\T{dist}^\rho_H(X,Y) \in [0,\infty)$ is defined by
\[
\T{dist}^\rho_H(X,Y) := \inf\big\{ r \geq 0 \mid \rho(x,Y) \leq r , \forall  x \in X  
\, \, \T{ and } \, \, \, \rho(y,X) \leq r , \forall y \in Y \big\} .
\]
\end{dfn}

Let $(V_1^{\T{sa}},L_1^{\T{sa}})$ and $(V_2^{\T{sa}},L_2^{\T{sa}})$ be order unit compact quantum metric spaces. We denote the corresponding unital complex vector subspaces by $V_1 \su A_1$ and $V_2 \su A_2$ (so $A_1$ and $A_2$ are unital $C^*$-algebras). The direct sum $V_1^{\T{sa}} \op V_2^{\T{sa}}$ is then again an order unit space, arising as the real part of the complex vector subspace $V_1 \op V_2 \su A_1 \op A_2$. Using the projections $\pi_j \colon  V_1^{\T{sa}} \op V_2^{\T{sa}} \to V_j^{\T{sa}}$ for $j = 1,2$ we may consider the state spaces $S(V_j^{\T{sa}})$ as compact subsets of the state space $S(V_1^{\T{sa}} \op V_2^{\T{sa}})$ and therefore, for any metric $\rho$ on $S(V_1^{\T{sa}} \op V_2^{\T{sa}})$, which metrizes the weak $*$-topology, we have the Hausdorff distance
\[
\T{dist}_H^{\rho}\big(  S(V_1^{\T{sa}}), S(V_2^{\T{sa}}) \big) \in [0,\infty) .
\]

\begin{dfn}\label{d:admin}
We say that a semi-norm $L^{\T{sa}} \colon  V_1^{\T{sa}} \op V_2^{\T{sa}} \to [0,\infty)$ with $L^{\T{sa}}(1_{A_1} \op 1_{A_2}) = 0$ is \emph{admissible} when
\begin{enumerate}
\item $(V_1^{\T{sa}} \op V_2^{\T{sa}}, L^{\T{sa}})$ is an order unit compact quantum metric space;

\item $L_1^{\T{sa}} \colon  V_1^{\T{sa}} \to [0,\infty)$ and $L_2^{\T{sa}} \colon  V_2^{\T{sa}} \to [0,\infty)$ are the quotient semi-norms of $L^{\T{sa}}$ with respect to the surjections $\pi_1 \colon  V_1^{\T{sa}} \op V_2^{\T{sa}} \to V_1^{\T{sa}}$ and $\pi_2 \colon  V_1^{\T{sa}} \op V_2^{\T{sa}} \to V_2^{\T{sa}}$, respectively.
\end{enumerate}
\end{dfn}

The following definition is taken from \cite[Definition 4.2]{Rie:GHD}:

\begin{dfn}
The \emph{quantum Gromov-Hausdorff distance} between the order unit compact quantum metric spaces $(V_1^{\T{sa}},L_1^{\T{sa}})$ and $(V_2^{\T{sa}},L_2^{\T{sa}})$ is the quantity
\[
\T{dist}_q( V_1^{\T{sa}}, V_2^{\T{sa}}) := \inf\big\{ \T{dist}_H^{\rho_{L^{\T{sa}}}}\big( S(V_1^{\T{sa}}), S(V_2^{\T{sa}})\big) \mid L^{\T{sa}} \T{ is an admissible semi-norm} \big\} .
\]
When $(A_1,L_1)$ and $(A_2,L_2)$ are $C^*$-algebraic compact quantum metric spaces (as in Definition \ref{d:cqms}), we define their \emph{quantum Gromov-Hausdorff distance} by
\[
\T{dist}_q( (A_1,L_1), (A_2,L_2)) := \T{dist}_q( V_1^{\T{sa}}, V_2^{\T{sa}}).
\]
\end{dfn}
We denote, for $j=1,2$, the order unit norm on $V_j^{\T{sa}}$  by $\| \cd \|_j \colon  V_j^{\T{sa}} \to [0,\infty)$ and we denote the radius by $r_j \geq 0$. 
Following Li \cite{Li:GH-dist},  we introduce the notation
\begin{equation}\label{eq:ballinter}
\C D_R( V_j^{\T{sa}}) := \big\{ \xi \in V_j^{\T{sa}} \mid L_j^{\T{sa}}(\xi) \leq 1 \T{ and } \| \xi \|_j \leq R \big\}, \quad R \geq 0.
\end{equation}
We remark that these subsets are totally bounded in the topology coming from the norm $\| \cd \|_j$ as a consequence of Theorem \ref{t:quamet}. The following definition is taken from \cite[Definition 4.2]{Li:GH-dist}:

\begin{dfn}
For $R \geq r_1,r_2 \geq 0$, the \emph{$R$-order unit quantum Gromov-Hausdorff distance} between the order unit compact quantum metric spaces $(V_1^{\T{sa}},L_1^{\T{sa}})$ and $(V_2^{\T{sa}},L_2^{\T{sa}})$ is defined by
\[
\begin{split}
 & \T{dist}_{\T{oq}}^R(V_1^{\T{sa}},V_2^{\T{sa}})  \\
  & \q := \inf \Big\{ \max \big\{ \T{dist}_H^{\rho_{\| \cd \|}}\big( \io_1(\C D_R( V_1^{\T{sa}})), \io_2(\C D_R( V_2^{\T{sa}})) \big) 
, R \cd \| \io_1(1_{A_1}) - \io_2(1_{A_2}) \| \big\} \Big\},
\end{split}
\]
where the infimum is taken over all triples $(\io_1,\io_2,W)$, where $W$ is a normed real vector space and $\io_1 \colon  V_1^{\T{sa}} \to W$ and $\io_2 \colon  V_2^{\T{sa}} \to W$ are isometric $\rr$-linear maps. The notation $\rho_{\| \cd \|}$ refers to the metric on $W$ coming from the norm $\| \cd \|$ on $W$. 
\end{dfn}

We quote the following result from \cite[Proposition 4.8 and Proposition 4.10]{Li:GH-dist}:

\begin{prop}\label{p:lipghd}
For every $R \geq r_1,r_2 \geq 0$, we have the inequalities
\[
\frac{1}{2} \cd \T{dist}_{\T{oq}}^R(V_1^{\T{sa}},V_2^{\T{sa}}) \leq \T{dist}_q(V_1^{\T{sa}},V_2^{\T{sa}}) \leq 3 \cd \T{dist}_{\T{oq}}^R(V_1^{\T{sa}},V_2^{\T{sa}}) .
\]
\end{prop}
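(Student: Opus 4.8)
The plan is to prove the two inequalities separately, moving back and forth between Rieffel's "state-space" picture and Li's "common ambient normed space" picture by means of the Lipschitz duality between an order unit space and the continuous affine functions on its state space. The central dictionary is the following: for an order unit compact quantum metric space $(V^{\T{sa}},L^{\T{sa}})$ with state space $S := S(V^{\T{sa}})$, the evaluation map $\xi \mapsto \hat\xi$, $\hat\xi(\mu) := \mu(\xi)$, realises $V^{\T{sa}}$ isometrically inside the space $\T{Aff}(S)$ of continuous affine functions (with the supremum norm), and under this identification $L^{\T{sa}}(\xi)$ coincides with the Lipschitz constant of $\hat\xi$ with respect to $\rho_{L^{\T{sa}}}$; this is the mechanism behind Proposition \ref{p:radbou}. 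Throughout, the norm-truncation defining $\C D_R(V^{\T{sa}})$ is exactly what renders the relevant sets totally bounded, by Theorem \ref{t:quamet}, hence produces finite Hausdorff distances.

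For the upper bound $\T{dist}_q \leq 3 \cd \T{dist}_{\T{oq}}^R$, I would fix $\varepsilon > 0$ and choose a near-optimal triple $(\io_1,\io_2,W)$, so that both $\T{dist}_H^{\rho_{\| \cd \|}}\big( \io_1(\C D_R(V_1^{\T{sa}})), \io_2(\C D_R(V_2^{\T{sa}})) \big)$ and $R \cd \| \io_1(1_{A_1}) - \io_2(1_{A_2}) \|$ are at most $\T{dist}_{\T{oq}}^R + \varepsilon =: \delta$. Using the isometric embeddings I would then build a candidate seminorm on $V_1^{\T{sa}} \oplus V_2^{\T{sa}}$ by the standard glueing recipe
\[
L^{\T{sa}}(\xi_1,\xi_2) := \max\Big\{ L_1^{\T{sa}}(\xi_1),\ L_2^{\T{sa}}(\xi_2),\ \tfrac{1}{\delta}\| \io_1(\xi_1) - \io_2(\xi_2) \| \Big\},
\]
where the coupling term pulls the two state spaces together while the $\max$-structure keeps $L_j^{\T{sa}}$ in control of the quotient seminorm. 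Verifying that $L^{\T{sa}}$ is genuinely admissible in the sense of Definition \ref{d:admin} — in particular that $L_1^{\T{sa}},L_2^{\T{sa}}$ are \emph{exactly} the quotient seminorms and that $(V_1^{\T{sa}} \oplus V_2^{\T{sa}}, L^{\T{sa}})$ is again an order unit compact quantum metric space — is the delicate step, and it is here that the hypothesis $R \geq r_1,r_2$ is used, together with total boundedness of the truncated balls. The fact that the units need not coincide under $\io_1,\io_2$ forces a small perturbation controlled by the term $R \cd \| \io_1(1_{A_1}) - \io_2(1_{A_2}) \|$, and a direct estimate then yields $\T{dist}_H^{\rho_{L^{\T{sa}}}}\big(S(V_1^{\T{sa}}),S(V_2^{\T{sa}})\big) \leq 3\delta$, the constant $3$ arising from combining the glueing distance with this control of the units.

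For the lower bound $\tfrac{1}{2} \cd \T{dist}_{\T{oq}}^R \leq \T{dist}_q$, I would run the argument in reverse. Given a near-optimal admissible seminorm $L^{\T{sa}}$ on the direct sum, I set $S := S(V_1^{\T{sa}} \oplus V_2^{\T{sa}})$ and use the compact metric space $(S,\rho_{L^{\T{sa}}})$ to manufacture a common ambient normed space, taking $W$ to be the real-valued Lipschitz (or affine) functions on $S$ with a suitable norm, and letting $\io_j$ send $\xi \in V_j^{\T{sa}}$ to the affine function $\nu \mapsto \nu$ evaluated on the corresponding coordinate; a short positivity argument shows this is isometric for the order unit norm. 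Lipschitz duality then converts the Hausdorff distance between the two embedded state spaces into the Hausdorff distance between the images of the $1$-Lipschitz, $R$-bounded functions, namely $\io_1(\C D_R(V_1^{\T{sa}}))$ and $\io_2(\C D_R(V_2^{\T{sa}}))$, while admissibility keeps the images of the units close; tracking the constant through the duality gives the factor $\tfrac{1}{2}$.

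The step I expect to be the main obstacle is the admissibility verification in the upper-bound direction: showing that the glued seminorm has precisely the prescribed quotient seminorms and defines a bona fide compact quantum metric space. This is exactly where the interaction between the norm-truncation in $\C D_R$ and the scaling constant $\tfrac{1}{\delta}$ must be handled carefully, and it is responsible for the asymmetry between the constants $\tfrac{1}{2}$ and $3$. The remaining estimates are routine applications of the triangle inequality for the Hausdorff distance once the correct ambient space and seminorm have been pinned down.
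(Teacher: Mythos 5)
The paper offers no proof of this proposition: it is quoted directly from Li (\cite[Propositions 4.8 and 4.10]{Li:GH-dist}), so the only meaningful comparison is between your sketch and Li's argument, whose overall architecture (a Rieffel-style bridge seminorm for the upper bound, an embedding/duality argument for the lower bound) you have correctly identified. There is, however, a concrete gap in your upper-bound direction that is more than a deferred routine check. The seminorm
\[
L^{\T{sa}}(\xi_1,\xi_2) := \max\Big\{ L_1^{\T{sa}}(\xi_1),\ L_2^{\T{sa}}(\xi_2),\ \tfrac{1}{\delta}\| \io_1(\xi_1) - \io_2(\xi_2) \| \Big\}
\]
is not admissible as written: it gives $L^{\T{sa}}(1_{A_1}\op 1_{A_2}) = \tfrac{1}{\delta}\|\io_1(1_{A_1}) - \io_2(1_{A_2})\|$, which is nonzero whenever the embeddings disagree on the units, whereas Definition \ref{d:admin} demands that this vanish (and Lemma \ref{l:qmsker} forces the kernel to be exactly $\rr(1_{A_1}\op 1_{A_2})$). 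Worse, the quotient-seminorm condition genuinely fails for this bridge: to match a general $\xi_1\in\B B_1(L_1^{\T{sa}})$ with some $\xi_2$, one must first translate by $\la 1_{A_1}$ (via Proposition \ref{p:radbou}) to land in $\C D_R(V_1^{\T{sa}})$, apply the Hausdorff hypothesis there, and translate back; the error then picks up a term $|\la|\cd\|\io_1(1_{A_1})-\io_2(1_{A_2})\|$, and $|\la|$ is unbounded over $\B B_1(L_1^{\T{sa}})$ since that set contains all real multiples of the unit. The repair --- passing to the quotient of $W$ by the line $\rr\big(\io_1(1_{A_1})-\io_2(1_{A_2})\big)$, equivalently inserting an infimum over $\la\in\rr$ inside the bridge norm, and then invoking $R\cd\|\io_1(1_{A_1})-\io_2(1_{A_2})\|\leq\delta$ to bound the distortion this causes on the truncated balls --- is precisely where the constant $3$ is earned; calling it a ``small perturbation'' elides the actual content of the result.

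The lower-bound direction is likewise underspecified at the one point where something must be proved. A functional $\nu\in S(V_1^{\T{sa}}\op V_2^{\T{sa}})$ evaluated ``on the corresponding coordinate'', i.e. $\xi\mapsto\nu(\xi\op 0)$, is not a state of $V_1^{\T{sa}}$ (it need not send $1_{A_1}\op 0$ to $1$), and the resulting map into functions on $S(V_1^{\T{sa}}\op V_2^{\T{sa}})$ is not isometric --- it is annihilated on $V_1^{\T{sa}}\op 0$ by every state factoring through $\pi_2$. Producing a single normed space $W$ carrying genuinely isometric copies of both $V_1^{\T{sa}}$ and $V_2^{\T{sa}}$, while keeping the Hausdorff distance between the images of $\C D_R(V_1^{\T{sa}})$ and $\C D_R(V_2^{\T{sa}})$ bounded by twice the state-space Hausdorff distance, requires an explicit construction (transfer maps between the two state spaces, or McShane-type extensions of Lipschitz functions), not just an appeal to ``Lipschitz duality''; note also that the identification of $L^{\T{sa}}(\xi)$ with the Lipschitz constant of $\hat\xi$ needs lower semi-continuity of $L^{\T{sa}}$, which is not assumed here. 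In short: the strategy is the right one and matches Li's, but both constants are earned exactly in the steps you have declared routine, and the displayed bridge formula would have to be modified before the argument could go through.
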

Thus, for families of compact quantum metric spaces with an upper bound $R$ on their radii, one may prove continuity in $\T{dist}_q$ by proving it in $\T{dist}_{\T{oq}}^R$ instead, and we will implement this strategy in Section \ref{s:unicri}.

\section{Semi-norms and non-isometric actions}\label{s:noniso}
In this section we aim to construct various semi-norms on algebras arising as crossed products with the integers. Later on, in Section \ref{s:comcro} we provide conditions ensuring that these semi-norms give rise to compact quantum metric spaces. For actions satisfying a suitable isometry criterion this was already studied in \cite{BMR:DSS} (see section \ref{ss:twist} for more details), but many naturally occurring actions do not fall into the class covered  in \cite{BMR:DSS}, and we will show below how to remedy this problem.
Throughout this section we consider a unital $C^*$-algebra $B$ equipped with a $*$-automorphism $\be \colon  B \to B$. Let $V_B \su B$ be a norm-dense complex subspace with $1 \in V_B$ and  assume that $\xi^* \in V_B$ for all $\xi \in V_B$. We fix a semi-norm $L_B \colon  V_B \to [0,\infty)$ with $L_B(1) = 0$ and with $L_B(\xi) = L_B(\xi^*)$ for all $\xi \in V_B$, and assume that the $*$-automorphism $\be \colon  B \to B$ and its inverse $\be^{-1} \colon  B \to B$ preserve the subspace $V_B$ so that
\[
\be(V_B) = V_B = \be^{-1}(V_B) .
\]
We emphasize that $\be \colon  V_B \to V_B$ is not assumed to be isometric with respect to the semi-norm $L_B \colon  V_B \to [0,\infty)$. In fact, we do not even assume that $\be \colon V_B \to V_B$ is bounded with respect to $L_B$. \\

Consider now the reduced crossed product $A := \cro{B}:=B\rtimes_{r} \ZZ$, which we represent on the Hilbert $C^*$-module $\ell^2(\zz) \hot B$ over $B$ via the left regular representation
\[
\la : \cro{B} \longrightarrow \B L\big( \ell^2(\zz) \hot B \big) ,
\] 
where $\B L\big( \ell^2(\zz) \hot B \big)$ denotes the unital $C^*$-algebra of bounded adjointable operators on the Hilbert $C^*$-module $\ell^2(\zz) \hot B$. We recall here that $\ell^2(\zz) \hot B$ agrees with the standard Hilbert $C^*$-module over the $C^*$-algebra $B$ as described in details in \cite[Chapter 1]{Lan:HCM}. The left regular representation is the injective $*$-homomorphism given by
\[
\la(b)(  e_m \ot c)  := e_m \ot \be^{-m}(b) \cd c \q \T{and} \q
\la(U)( e_m \ot c) := e_{m + 1} \ot c
\]
for all $b,c \in B$ and $m \in \zz$, where the sequence $\{ e_m \}_{m \in \zz}$ denotes the standard orthonormal basis of the Hilbert space $\ell^2(\zz)$. Notice that the relation $\la(U) \la(b) \la(U^*) = \la( \be(b))$ holds for all $b \in B$. We will often identify $\cro{B}$ with the image $\la(\cro{B}) \su \B L(\ell^2(\zz) \hot B)$. We are primarily interested in the reduced crossed product in contrast to the maximal crossed product since one of our main motivations is to understand the spectral geometry of crossed products in the context of spectral triples (equipped with a faithful representation).  

We now define the dense subspace
\[
V_A := C_c(\zz,V_B) \su A = \cro{B}
\]
as the smallest subspace of the reduced crossed product containing the vectors $\xi \cd U^n$ for all $\xi \in V_B$ and all $n \in \zz$. Remark that the unit $1 \in A$ belongs to $V_A$ and that $V_A$ is invariant under the involution on $A = \cro{B}$. Indeed, for $\xi \in V_B$ and $n \in \zz$ we have that
\[
(\xi \cd U^n)^* = \be^{-n}(\xi^*) \cd U^{-n} \in V_A
\]
by our standing assumptions. Denote by $\io \colon  B \to \cro{B}$ the unital $*$-homomorphism given by $\io(b) = \la(b)$. 
In order to introduce many interesting semi-norms on $V_A$ we make the following:

\begin{dfn}\label{d:vertiii}
We say that a norm $\vertiii{ \cd } : C_c(\zz) \to [0,\infty)$ is \emph{order preserving} when
\[
\bvert{ h } \leq \bvert{ k  },
\]
whenever $0 \leq h(n) \leq k(n)$ for all $n \in \zz$.
\end{dfn} 

For each $n \in \zz$ we let $e_n \in C_c(\zz)$ denote the compactly supported function which is $1$ at $n \in \zz$ and $0$ everywhere else.

For the remainder of this section we fix an order preserving norm $\vertiii{\cd } \colon C_c(\zz) \to [0,\infty)$, and, for convenience, we moreover assume that $\vertiii{\cd}$ is normalized such that $\vertiii{e_0} = 1$. 
We aim to study the two semi-norms $L_1$ and $L_2 \colon  V_A \to [0,\infty)$ defined by
\begin{equation}\label{eq:semi}
\begin{split}
& L_1( \eta ) := \big\| \sum_{n=-\infty}^{\infty} n \cd \eta(n) U^n \big\| \q \T{and} \\
& L_2( \eta ) := \max\big\{ \vertiii{ L_B \ci \eta} , \vertiii{ L_B \ci \eta^*} \big\} ,
\end{split}
\end{equation}
where the sum appearing is in fact finite and where the involution comes from the involution in $C_r^*(\zz,B)$. We remark that $L_j(\eta) = L_j(\eta^*)$ and $L_j(1) = 0$ for $j = 1,2$ and all $\eta \in V_A$. Let $\tau_0 \colon  \cro{B} \to B$ denote the conditional expectation given by $\tau_0( x ) := \inn{e_0 \ot 1_B, \la(x) (e_0 \ot 1_B)}$ for all $x \in \cro{B}$ and for each $n \in \zz$ we define $\tau_n \colon  \cro{B} \to B$ by $\tau_n(x) := \tau_0(x \cd U^n )$. Remark that for $\eta \in V_A$ and $n \in \zz$ we have that
\[
\eta(n) = \tau_{-n}(\eta) \q \T{and} \q \eta^*(n) = \be^n\big( \eta(-n)^*\big) = \tau_{-n}(\eta^*) .
\]
In particular, it holds that
\[
\begin{split}
L_2(\eta) & = \max\big\{  \bvert{ \sum_{n=-\infty}^\infty L_B(\tau_{-n}(\eta)) e_n} , \bvert{\sum_{n=-\infty}^\infty  L_B(\tau_{-n}(\eta^* )) e_n} \big\} \\
& = \max\big\{ \sup_{N \in \nn}\bvert{ \sum_{n = -N}^N L_B(\tau_{-n}(\eta)) e_n}, \sup_{N \in \nn}\bvert{ \sum_{n = -N}^N 
L_B(\tau_{-n}(\eta^* )) e_n} \big\}, 
\end{split}
\]
where the last equality follows since the norm $\vertiii{\cd}$ is assumed to be order preserving.

\begin{lemma}\label{l:lowsemi}
The semi-norm $L_2 \colon  V_A \to [0,\infty)$ is lower semi-continuous if and only if the semi-norm $L_B \colon  V_B \to [0,\infty)$ is lower semi-continuous.
\end{lemma}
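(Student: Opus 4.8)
The plan is to handle the two implications separately, in both cases reducing the lower semi-continuity of $L_2$ to that of the elementary maps $\eta \mapsto L_B(\tau_{-n}(\eta))$, while exploiting that each conditional expectation $\tau_n \colon \cro{B} \to B$ is contractive and that the order preserving norm $\vertiii{\cd}$ is continuous and nondecreasing in each coordinate on the positive cone.

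First I would treat the implication that lower semi-continuity of $L_B$ forces lower semi-continuity of $L_2$. For a fixed $n \in \zz$, the conditional expectation $\tau_{-n}$ restricts to a continuous map $V_A \to V_B$, since it is contractive and satisfies $\tau_{-n}(\eta) = \eta(n) \in V_B$ for every $\eta \in V_A$. Because the involution is an isometry of $A$ with $\tau_{-n}(\eta^*) = \eta^*(n) \in V_B$, both $\eta \mapsto L_B(\tau_{-n}(\eta))$ and $\eta \mapsto L_B(\tau_{-n}(\eta^*))$ are lower semi-continuous on $V_A$, being the composition of a continuous map with the lower semi-continuous semi-norm $L_B$. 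Now fix $N \in \nn$ and put $\Phi_N(\eta) := \bvert{\sum_{n=-N}^N L_B(\tau_{-n}(\eta)) e_n}$. Restricted to the cone of nonnegative functions supported in $\{-N,\dots,N\}$, the norm $\vertiii{\cd}$ is a continuous function which, by order preservation, is nondecreasing in each coordinate; and a continuous, coordinatewise nondecreasing function of finitely many nonnegative lower semi-continuous functions is again lower semi-continuous (by a direct $\ep$-neighbourhood argument using monotonicity and continuity of the norm). Hence each $\Phi_N$ is lower semi-continuous. The displayed formula for $L_2$ preceding the lemma expresses $L_2$ as the maximum of $\sup_N \Phi_N$ and its starred analogue, and since a pointwise supremum of lower semi-continuous functions and a finite maximum of such are again lower semi-continuous, $L_2$ is lower semi-continuous. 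The role of $\sup_N$ is precisely to absorb the fact that the supports of approximating elements may grow, so that each individual $\Phi_N$ only involves a fixed finite set of coordinates.

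For the converse I would use the canonical unital $*$-homomorphism $\io \colon B \to \cro{B}$, which is isometric and places $b$ in degree zero: one computes $\io(b)(0) = b$ and $\io(b)(n) = 0$ for $n \neq 0$, and likewise for $\io(b)^*$. Using the normalization $\vertiii{e_0} = 1$, homogeneity of $\vertiii{\cd}$, and the standing assumption $L_B(b) = L_B(b^*)$, this yields $L_2(\io(b)) = \max\{L_B(b), L_B(b^*)\} = L_B(b)$ for all $b \in V_B$. Thus $L_B = L_2 \ci \io$ with $\io \colon V_B \to V_A$ isometric, hence continuous, so if $L_2$ is lower semi-continuous then so is the composition $L_B$.

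The only genuinely technical point is the lower semi-continuity of $\Phi_N$, namely the interaction between order preservation of $\vertiii{\cd}$ and lower semi-continuity of the coordinate functions; I expect this mild monotonicity argument to be the main obstacle, while the reduction via $\tau_{-n}$ and $\io$ and the stability of lower semi-continuity under suprema and finite maxima are purely formal.
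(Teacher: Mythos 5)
Your proposal is correct and follows essentially the same route as the paper: the converse direction via $L_B = L_2 \ci \io$ using the normalization $\vertiii{e_0}=1$, and the forward direction via lower semi-continuity of $\eta \mapsto L_B(\tau_{-n}(\eta))$ and $\eta \mapsto L_B(\tau_{-n}(\eta^*))$ combined with the expression of $L_2$ as a supremum over $N$ of order-preserving norms of finite sums. Your explicit monotonicity argument for the lower semi-continuity of each $\Phi_N$ fills in a step the paper leaves implicit, but the structure of the argument is identical.
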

\begin{proof}
Notice first that our normalization condition $\vertiii{e_0} = 1$ entails that $L_B = L_2 \ci \io \colon V_B \to [0,\infty)$ so that lower semi-continuity of $L_2 \colon  V_A \to [0,\infty)$ implies lower semi-continuity of $L_B \colon  V_B \to [0,\infty)$. Suppose now that $L_B \colon  V_B \to [0,\infty)$ is lower semi-continuous. For each $n \in \zz$, define the functions $g_n$ and $h_n \colon  V_A \to [0,\infty)$ by $g_n(\eta) := L_B( \tau_{-n}(\eta))$ and $h_n(\eta) := L_B( \tau_{-n}( \eta^*))$ for all $\eta \in V_A$. Then $g_n$ and $h_n$ are lower semi-continuous since $\tau_{-n}$ and $\tau_{-n} \ci * \colon  \cro{B} \to B$ are continuous and $L_B \colon  V_B \to [0,\infty)$ is lower semi-continuous by assumption. This shows that $L_2 \colon  V_A \to [0,\infty)$ is lower semi-continuous as well since 
\[
L_2(\eta) = \max\big\{ \sup_{N\in \nn} \bvert{ \sum_{n = -N}^N g_n(\eta) e_n }, \sup_{N\in \nn} \bvert{ \sum_{n = -N}^N h_n(\eta) e_n } \big\} 
\]
for all $\eta \in V_A$.
\end{proof}

For each $z \in \B T$, we have the unitary operator $V_z \colon  \ell^2(\zz) \hot B \to \ell^2(\zz) \hot B$ defined by $V_z( e_m \ot b) := z^m \cd e_m \ot b$ for all $b \in B$ and $m \in \zz$. This unitary operator induces a strongly continuous action of the circle $\al \colon  \B T \ti \cro{B} \to \cro{B}$ given by 
\begin{equation}\label{eq:dual}
\al_z( x) := V_z x V_z^* \q \T{for all } x \in \cro{B} \T{ and } z \in \B T,
\end{equation}
and which is referred to as the \emph{dual action}. We remark that 
\begin{align}\label{eq:dual-action}
\al_z( b \cd U^n) = z^n \cd b \cd U^n
\end{align}
 for all $b \in B$, $z \in \B T$ and $n \in \zz$.\\
The arc length on the compact group $\B T$ yields a length function $l \colon  \B T \to [0,\infty)$ given by
\begin{equation}\label{eq:length}
l( \exp(i t) ) := |t| \q \T{for all } t \in [-\pi,\pi].
\end{equation}
This length function in combination with the dual action give rise to a semi-norm $L_l \colon  V_A \to [0,\infty)$ defined by
\[
L_l(\eta) := \sup \left\{ \frac{\| \al_z(\eta) - \eta \|}{l(z)} \mathrel{\Big|} z \in \B T \sem \{1\} \right\}
\]
for all $\eta \in V_A$, see for example \cite{Rie:MSA}. Clearly, the semi-norm $L_l \colon  V_A \to [0,\infty)$ is lower semi-continuous. The next result is well-known, but for completeness we provide the short proof here:

\begin{lemma}\label{l:circle}
The two semi-norms $L_1$ and $L_l \colon  V_A \to [0,\infty)$ agree. In particular, $L_1 \colon  V_A \to [0,\infty)$ is lower semi-continuous.
\end{lemma}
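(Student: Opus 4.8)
The plan is to fix $\eta \in V_A$ and prove the equality by establishing the two inequalities $L_l(\eta) \geq L_1(\eta)$ and $L_l(\eta) \leq L_1(\eta)$ separately, exploiting throughout that the dual action $\al$ is implemented by the unitaries $V_z$ and hence acts \emph{isometrically} on $\cro{B}$. Since $\eta = \sum_n \eta(n) U^n$ is a finite sum, formula \eqref{eq:dual-action} gives $\al_{\exp(it)}(\eta) = \sum_n e^{int} \eta(n) U^n$, so I would introduce the $A$-valued curve $f(t) := \al_{\exp(it)}(\eta)$. This curve is norm-$C^1$, being a finite linear combination of scalar exponentials with fixed coefficients in $A$, and its derivative is $f'(t) = i \sum_n n\, e^{int} \eta(n) U^n = i \cd \al_{\exp(it)}\big( \sum_n n\, \eta(n) U^n \big)$. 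Because $\al$ is isometric, $\| f'(t) \| = \big\| \sum_n n\, \eta(n) U^n \big\| = L_1(\eta)$ for \emph{every} $t$, and this constancy is the crux of the argument.

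For the upper bound I would invoke the fundamental theorem of calculus in the Banach space $A$: for $z = \exp(it)$ with $t \in [-\pi,\pi]$ one has $\al_z(\eta) - \eta = f(t) - f(0) = \int_0^t f'(s)\, ds$, whence $\| \al_z(\eta) - \eta \| \leq |t| \cd \sup_s \| f'(s) \| = l(z) \cd L_1(\eta)$ by \eqref{eq:length}. Dividing by $l(z)$ and taking the supremum over $z \in \B T \sem \{ 1 \}$ yields $L_l(\eta) \leq L_1(\eta)$. For the lower bound I would instead use differentiability of $f$ at $0$: since $\lim_{t \to 0} \frac{f(t) - f(0)}{t} = f'(0) = i \sum_n n\, \eta(n) U^n$, it follows that $\lim_{t \to 0^+} \frac{\| \al_{\exp(it)}(\eta) - \eta \|}{|t|} = \| f'(0) \| = L_1(\eta)$. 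As $L_l(\eta)$ dominates each quotient $\frac{\| \al_z(\eta) - \eta \|}{l(z)}$ with $z \neq 1$, it dominates this limit, giving $L_l(\eta) \geq L_1(\eta)$. Combining the two inequalities yields $L_1 = L_l$; the final assertion is then immediate, since $L_l$ is a supremum of the norm-continuous functions $\eta \mapsto \| \al_z(\eta) - \eta \| / l(z)$ and is therefore lower semi-continuous, so $L_1$ is as well.

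I expect the only genuinely delicate point to be the justification of the vector-valued fundamental theorem of calculus and the attendant norm estimate $\big\| \int_0^t f'(s)\, ds \big\| \leq |t| \cd \sup_s \| f'(s) \|$. Since $f$ is norm-$C^1$ with values in a $C^*$-algebra, these are completely standard facts of Banach-space calculus, so I anticipate no real obstacle — the substance of the proof lies entirely in the observation that isometry of $\al$ makes $\| f'(t) \|$ constant.
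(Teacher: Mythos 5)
Your proposal is correct and follows essentially the same route as the paper's own proof: both arguments use the norm-$C^1$ curve $t \mapsto \al_{\exp(it)}(\eta)$, identify its derivative as $\al_{\exp(it)}(\pa(\eta))$ with constant norm $L_1(\eta)$ by isometry of the dual action, and then obtain $L_l \leq L_1$ from the vector-valued fundamental theorem of calculus and $L_l \geq L_1$ from the difference quotient at $t = 0$. The concluding lower semi-continuity observation also matches the paper's reasoning.
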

\begin{proof}
Let $\eta = \sum_{n = -\infty}^\infty \xi_n U^n \in V_A$ be given. The map $\rr \to A$ given by $t \mapsto \al_{\exp(i t)}( \eta)$ is smooth and the derivative at $0 \in \rr$ is given by
\[
\pa(\eta) := \lim_{t \to 0}\frac{ \al_{\exp(i t)}(\eta) - \eta}{t} = i \cd \sum_{n = -\infty}^\infty n \xi_n U^n .
\]
It therefore holds that $L_1(\eta) = \| \sum_{n = -\infty}^\infty n \xi_n U^n \| = \| \pa(\eta) \|$ and we moreover obtain the estimate
\[
L_1(\eta) = \Big\| \lim_{t \to 0}\frac{ \al_{\exp(i t)}(\eta) - \eta}{t} \Big\| 
\leq \sup_{t \in \rr \sem \{0\}} \frac{\| \al_{\exp(i t)}(\eta) - \eta \|}{|t|} = L_l(\eta) .
\]
To obtain the reverse inequality, we note that the derivative of the map $t \mapsto \al_{\exp(it)}(\eta)$ at any $t_0 \in \rr$ is given by $\al_{\exp(i t_0)}( \pa(\eta))$. This implies that
\[
\frac{\| \al_{\exp(it)}(\eta) - \eta \|}{ |t|} = \frac{1}{|t|} \Big\| \int_0^t \al_{\exp(is)}( \pa(\eta)) ds \Big\|
\leq \| \pa(\eta) \| =L_1(\eta) 
\]
for all $t \in \rr \sem \{0\}$ and thus that $L_l(\eta) \leq L_1(\eta)$. 
\end{proof}

\subsection{Twisted derivations on crossed products}\label{ss:twist}
For the remainder of this section, we discuss some alternative densely defined semi-norms on $\cro{B}$ which are more tightly related to the noncommutative geometric structure of this reduced crossed product. These semi-norms generalize the semi-norms introduced in \cite{BMR:DSS} for equicontinuous actions in so far that our conditions on the $*$-automorphism $\be \colon  B \to B$ are considerably more relaxed. This increased degree of flexibility has the advantage of capturing examples occurring naturally in Riemannian geometry. We will elaborate on this in Example \ref{ex:diff}, Remark \ref{rem:diff} and Section \ref{sec:diff-of-riemannian}, but before doing so we treat the abstract theory in detail.  Some of the constructions appearing here are inspired by the modular techniques from \cite{Kaa:UKM}. \\ %

Suppose that $D \colon  \T{Dom}(D) \to H$ is an unbounded selfadjoint operator acting on the Hilbert space $H$ and that our unital $C^*$-algebra $B$ is represented in a faithful and unital way on $H$ via a $*$-homomorphism $\pi \colon  B \to \B L(H)$.
We assume that $\s B \su B$ is a dense unital $*$-subalgebra of $B$ such that $\pi(b)( \T{Dom}(D) ) \su \T{Dom}(D)$ and the commutator $[D, \pi(b)] \colon  \T{Dom}(D) \to H$ extends to a bounded operator $d(b) \colon  H \to H$ for all $b \in \s B$. In particular, we have the semi-norm
\[
L_D : \s B \to [0,\infty) \q L_D(b) := \| \ov{ [D,\pi(b)]} \| = \| d(b) \| .
\]
These conditions are satisfied when $(\s B,H,D)$ is a unital spectral triple, but we need not require that the resolvent $(i + D)^{-1} \colon  H \to H$ be compact. 
The next definition is a slightly modified version of \cite[Definition 2 \& 3]{BMR:DSS}, see also \cite[Lemma 2]{BMR:DSS}:
\begin{dfn}\label{d:quasi}
We say that $\be \in \T{Aut}(B)$ with $\be(\s B) = \s B$ is \emph{quasi-isometric} when there exist constants $\la, \mu \geq 1$ such that 
\[
\sup_{n \in \nn_0 }\mu^{-n} \cd \| d( \be^n(b)) \|_\infty < \infty \q \mbox{and} \q  
\sup_{n \in \nn_0 }\la^{-n} \cd \| d( \be^{-n}(b)) \|_\infty < \infty
\]
for all $b \in \s B$. We refer to such $\la,\mu \geq 1$ as \emph{quasi-isometry constants}. We say that $\be$ is \emph{equicontinuous} when
\[
\sup_{n \in \zz} \| d(\be^n(b)) \|_\infty < \infty \q \T{for all } b \in \s B .
\]
\end{dfn}

We remark that the above definition could be extended to the more general context of semi-norms that do not necessarily arise from a spectral triple. Our primary example of a quasi-isometric action here below is contained within the framework of spectral triples and we shall therefore not comment any further on the more general context. 

\begin{example}\label{ex:diff}
In this example we show how quasi-isometric actions arise naturally in differential geometry. To this end, let $M$ be a compact Riemannian manifold, $E \to M$ be a complex hermitian vector bundle and $\s D \colon \Ga^\infty(M,E) \to L^2(M,E)$ be a first order, symmetric and elliptic differential operator. Let moreover $\psi \colon M \to M$ be a diffeomorphism. With $D \colon \T{Dom}(D) \to L^2(M,E)$ denoting the closure of $\s D$ we obtain a spectral triple $(C^\infty(M), L^2(M,E), D)$ and the corresponding semi-norm $L_D \colon C^\infty(M) \to [0,\infty)$ can be computed in terms of the principal symbol $\si_D \colon T^* M \to \T{End}(E)$, see for example \cite[Chapter 10]{HiRo:AKH}. Indeed, suppressing the action of both $C(M)$ and the action of the continuous sections of $\T{End}(E)$ on $L^2(M,E)$ we have the relation
\[
\ov{ [D,f] } = \si_D( df )  \q \T{for all } f \in C^\infty(M),
\]
where $d \colon C^\infty(M) \to \Ga^\infty(M,T^* M)$ denotes the exterior derivative. Associated to $\psi$ and $\psi^{-1}$ we have the exterior derivatives $d \psi \colon TM \to \psi^* TM$ and $d \psi^{-1} \colon TM \to (\psi^{-1})^* TM$; in particular, we have the operator norms
\[
\| d \psi \| := \sup_{p \in M} \| d \psi(p) \|_{\B L(T_p M, T_{\psi(p)}M)} \q 
\| d \psi^{-1} \| := \sup_{p \in M} \| d \psi^{-1}(p) \|_{\B L(T_p M, T_{\psi^{-1}(p)}M)} \\
\]
It can then be verified that the $*$-automorphism $\be \colon C(M) \to C(M)$ defined by $\be(f) := f \ci \psi$ is quasi-isometric using the quasi-isometry constants
\[
\la := \max\{ 1, \| d \psi^{-1} \| \} \q \T{and} \q \mu := \max\{ 1, \| d \psi \| \} .
\]
\end{example}
\medskip

We now return to the general setting and fix, for the rest of this section, a $\be \in \T{Aut}(B)$ with $\be(\s B) = \s B$. We may lift the unbounded selfadjoint operator $D$ to an unbounded selfadjoint operator $\T{diag}(D) \colon  \T{Dom}( \T{diag}(D)) \to \ell^2(\zz) \hot H$ defined by
\[
\T{diag}(D)\Big( \sum_{m = -\infty}^\infty e_m \ot \xi_m\Big) := \sum_{m = -\infty}^\infty e_m \ot D(\xi_m)
\]
for all vectors in the domain $\T{Dom}(\T{diag}(D))$, which is defined by
\[
\T{Dom}(\T{diag}(D)) := \Big\{ \sum_{m = -\infty}^\infty e_m \ot \xi_m \in \ell^2(\zz) \hot H \mid \xi_m \in \T{Dom}(D), \sum_{m = -\infty}^\infty \| D(\xi_m)\|^2<\infty \Big\} .
\]
We remark that the compactly supported sequences in $\T{Dom}(D)$, i.e.~the subspace $C_c(\zz, \T{Dom}(D)) \su \ell^2(\zz) \hot H$, form a core for $\T{diag}(D)$. 
We may also use the faithful unital representation $\pi \colon  B \to \B L(H)$ to obtain a left regular representation of the reduced crossed product $\la_\pi \colon  \cro{B} \to \B L( \ell^2(\zz) \hot H)$ given by 
\[
\la_\pi(b)(e_m \ot \xi) := e_m \ot \pi(\be^{-m}(b))(\xi) \q \T{and} \q \la_\pi(U)(e_m \ot \xi) := e_{m + 1} \ot \xi .
\]
We sometimes suppress the left regular representation $\la_\pi$ from the notation. It would be natural to use the commutator with $\T{diag}(D)$ and the representation $\la_\pi$ to define a derivation on the unital $*$-algebra $\s A := C_c(\zz,\s B)$. However a simple computation reveals that
\[
[ \T{diag}(D), \la_\pi(b) ](e_m \ot \xi) = e_m \ot d( \be^{-m}(b))(\xi)
\]
for all $m \in \zz$, $\xi \in \T{Dom}(D)$ and $b \in \s B$. In particular, the commutator $[ \T{diag}(D), \la_\pi(b) ]$ extends to a bounded operator if and only if the $*$-automorphism $\be \colon  B \to B$ is equicontinuous, in the sense of Definition \ref{d:quasi}.  Let us now instead only assume that the $*$-automorphism $\be \colon  B \to B$ is quasi-isometric with respect to the derivation $d \colon  \s B \to \B L(H)$ and fix the quasi-isometry constants $\la,\mu \geq 1$ as in Definition \ref{d:quasi}. We may then control the unbounded behaviour of the commutator $[\T{diag}(D), \la_\pi(b)] \colon  C_c(\zz,\T{Dom}(D)) \to \ell^2(\zz) \hot H$ by introducing a modular operator: 

\begin{dfn}
In the setting above we define the \emph{modular operator} $\Ga \colon  \ell^2(\zz) \hot H \to \ell^2(\zz) \hot H$ by the formula $\Ga( e_m \ot \xi) := e_m \ot \ka(m) \cd \xi$, where the constants $\ka(m) \in (0,1]$ are given by
\[
\ka(m) := \fork{ccc}{\la^{-m} & \T{for} & m \geq 0 \\ \mu^m & \T{for} & m < 0} .
\]  
\end{dfn}

Since $\la,\mu \geq 1$, the modular operator is bounded and strictly positive and the inverse $\Ga^{-1} \colon  \T{Im}(\Ga) \to \ell^2(\zz) \hot H$ has the explicit description $\Ga^{-1}( e_m \ot \xi) := e_m \ot \ka(m)^{-1} \xi$ on the invariant core $C_c(\zz,H) \subseteq \T{Im}(\Ga) = \T{Dom}(\Ga^{-1})$. When $\la$ or $\mu$ is different from $1$, we have that $\Ga^{-1}$ is an unbounded positive and selfadjoint operator. 
We apply the modular operator $\Ga$ to ``rescale'' the unbounded selfadjoint operator $\T{diag}(D)$ and consider the \emph{modular lift} $D_{\Ga} := \T{diag}(D) \Ga \colon  \T{Dom}(D_\Ga) \to \ell^2(\zz) \hot H$ defined by
\[
D_{\Ga}\Big( \sum_{m = -\infty}^\infty e_m \ot \xi_m\Big) := \sum_{m = -\infty}^\infty e_m \ot \ka(m) \cd D(\xi_m),
\]
on the domain given by
\[
\T{Dom}(D_\Ga) := \Big\{ \sum_{m = -\infty}^\infty e_m \ot \xi_m \mid \xi_m\in \T{Dom}(D), \sum_{m = -\infty}^\infty e_m \ot \ka(m) \cd D(\xi_m)
\in \ell^2(\zz) \hot H \Big\},
\]
see \cite[Section 5]{Kaa:UKM}. 
The modular lift $D_\Ga$ is again an unbounded selfadjoint operator on $\ell^2(\zz) \hot H$ and it has $C_c(\zz,\T{Dom}(D)) \su \ell^2(\zz) \hot H$ as a core. It turns out that the modular lift also does not have a well-behaved commutator with the elements in the unital $*$-algebra $\s A=C_c(\ZZ, \s B)$, but that one may instead twist the commutator by a couple of \emph{modular representations} and in this way obtain a twisted derivation $\pa_\Ga \colon  \s A \to \B L( \ell^2(\zz) \hot H)$. We now explain how this works.

\begin{lemma}\label{l:modular}
Let $x \in \s A = C_c(\zz,\s B)$. Then the densely defined operator
\[
\Ga^{1/2} \la_\pi(x) \Ga^{-1/2} \colon  \T{Im}(\Ga^{1/2}) \to \ell^2(\zz) \hot H 
\]
extends to a bounded operator, which we denote by $\si^{1/2}(x) \colon  \ell^2(\zz) \hot H \to \ell^2(\zz) \hot H$.
%
\end{lemma}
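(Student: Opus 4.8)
The plan is to reduce the claim to monomials and then to recognise $\si^{1/2}(x)$ as a weighted bilateral shift whose coefficients are governed by the ratios $\ka(m+k)/\ka(m)$. First, since $\s A = C_c(\zz,\s B)$ is linearly spanned by the monomials $b \cd U^k$ with $b \in \s B$ and $k \in \zz$, and since $x \mapsto \Ga^{1/2} \la_\pi(x) \Ga^{-1/2}$ is linear, it suffices to treat a single monomial $x = b \cd U^k$; the general statement then follows by summing the resulting finitely many bounded operators. For such a monomial I would compute the action explicitly. Writing a generic element of the domain $\T{Im}(\Ga^{1/2}) = \T{Dom}(\Ga^{-1/2})$ as $\zeta = \sum_m e_m \ot \eta_m$ (so that $\sum_m \ka(m)^{-1} \| \eta_m \|^2 < \infty$), and using $\Ga^{-1/2}(e_m \ot \eta_m) = e_m \ot \ka(m)^{-1/2} \eta_m$ together with $\la_\pi(b \cd U^k)(e_m \ot \xi) = e_{m+k} \ot \pi(\be^{-(m+k)}(b)) \xi$, one finds
\[
\Ga^{1/2} \la_\pi(b \cd U^k) \Ga^{-1/2} \zeta = \sum_m \Big( \frac{\ka(m+k)}{\ka(m)} \Big)^{1/2} e_{m+k} \ot \pi\big( \be^{-(m+k)}(b)\big) \eta_m ,
\]
the domain condition ensuring that each intermediate expression lies in $\ell^2(\zz) \hot H$.

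The heart of the argument is the uniform control of the coefficients $c(m) := (\ka(m+k)/\ka(m))^{1/2}$. Since $\be$ is a $*$-automorphism (hence isometric) and $\pi$ is contractive (being a $*$-homomorphism), we have $\| \pi(\be^{-(m+k)}(b)) \| \leq \| b \|$ for all $m$, so the $H$-part contributes at most $\|b\|$ and the only thing left is to show that $\sup_{m \in \zz} \ka(m+k)/\ka(m) < \infty$ for each fixed $k$. This is precisely where the shape of $\ka$ and the assumption $\la, \mu \geq 1$ enter. Splitting into cases according to whether each of $m$ and $m+k$ is non-negative or negative: if both are non-negative the ratio equals $\la^{-k}$; if both are negative it equals $\mu^k$; and the remaining mixed case, where $m$ and $m+k$ lie on opposite sides of $0$, occurs for only finitely many integers $m$, each contributing a finite ratio. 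Hence $C := \sup_m c(m) < \infty$.

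Putting the two ingredients together, orthogonality of the shifted basis vectors $\{ e_{m+k} \}_{m}$ gives
\[
\big\| \Ga^{1/2} \la_\pi(b \cd U^k) \Ga^{-1/2} \zeta \big\|^2 = \sum_m c(m)^2 \big\| \pi(\be^{-(m+k)}(b)) \eta_m \big\|^2 \leq C^2 \| b \|^2 \sum_m \| \eta_m \|^2 ,
\]
so that $\Ga^{1/2} \la_\pi(b \cd U^k) \Ga^{-1/2}$ is bounded by $C \cd \| b \|$ on the dense domain $\T{Im}(\Ga^{1/2})$ and therefore extends uniquely to a bounded operator $\si^{1/2}(b \cd U^k)$ on all of $\ell^2(\zz) \hot H$. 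Summing over the finitely many monomials appearing in $x$ then produces $\si^{1/2}(x)$.

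I expect the genuine content to reside entirely in the uniform bound on $\ka(m+k)/\ka(m)$; everything else is routine bookkeeping. The only point requiring a little care is the transitional regime near $m = 0$, where $\ka$ switches between its two branches, but since this involves only finitely many indices it cannot affect finiteness of the supremum. It is worth noting that this lemma uses only $\la, \mu \geq 1$ and not the quasi-isometry inequalities themselves; the latter enter later, when bounding the twisted derivation $\pa_\Ga$ rather than $\si^{1/2}$.
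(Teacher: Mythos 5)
Your proof is correct and follows essentially the same route as the paper's: both rest on the explicit diagonal computation of $\Ga^{1/2}\la_\pi(\cd)\Ga^{-1/2}$ on basis vectors and the uniform boundedness of the ratios $\ka(m+k)/\ka(m)$, which indeed only requires $\la,\mu\geq 1$ and not the quasi-isometry inequalities, as you correctly observe. The only cosmetic difference is that the paper reduces to the generators $U$, $U^*$ and $b\in\s B$ and records the closed formulas $\si^{1/2}(U)=\la^{-1/2}\la_\pi(U)P+\mu^{1/2}\la_\pi(U)(1-P)$ and $\si^{1/2}(U^*)=\la^{1/2}P\la_\pi(U^*)+\mu^{-1/2}(1-P)\la_\pi(U^*)$, which are reused later in the proof of Theorem \ref{cor:BMR-cor}, whereas you treat a general monomial $bU^k$ directly via the case analysis on the sign of $m$ and $m+k$.
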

\begin{proof}
It suffices to consider the cases where $x = U$, $x = U^*$ or $x = b$ for some $b \in \s B$. The case where $x = b$ is a triviality and here we obtain that $\si^{1/2}(b) = \la_\pi(b)$. So we focus on the situation where $x = U$ or $x = U^*$. Let $P \colon  \ell^2(\zz) \hot H \to \ell^2(\zz) \hot H$ denote the orthogonal projection with image $\ell^2(\nn_0 ) \hot H \su \ell^2(\zz) \hot H$. For each $\xi \in H$ and $m \in \B Z$, we compute that
\[
\begin{split}
 \Ga^{1/2} \la_\pi(U) \Ga^{-1/2}(e_m \ot \xi) 
&  = \Ga^{1/2} \la_\pi(U) \la^{m/2} P (e_m \ot \xi)
+ \Ga^{1/2} \la_\pi(U) \mu^{-m/2} (1 - P) (e_m \ot \xi) \\
&  = \la^{-1/2} \cd \la_\pi(U) P (e_m \ot \xi)
+ \mu^{1/2} \cd \la_\pi(U) (1 - P) (e_m \ot \xi) .
\end{split}
\]
Since the subspace $C_c(\B Z,H)$ is a core for $\Ga^{-1/2}$ this implies that $\si^{1/2}(U) = \la^{-1/2} \la_\pi(U) P + \mu^{1/2} \la_\pi(U) (1 - P)$. A similar computation shows that $\si^{1/2}(U^*) = \la^{1/2} P \la_\pi(U^*) + \mu^{-1/2}(1 - P) \la_\pi(U^*)$. This proves the lemma.
\end{proof}

We define the algebra homomorphism $\si^{1/2} \colon  \s A \to \B L(\ell^2(\zz) \hot H)$ using the above lemma, and the algebra homomorphism $\si^{-1/2} \colon  \s A \to \B L( H \hot \ell^2(\zz))$ by putting $\si^{-1/2}(x) := \si^{1/2}(x^*)^*$ for all $x \in \s A$. We refer to these two algebra representations as the \emph{modular representations}.

\begin{lemma}\label{lem:commutator-formulas}
Suppose that the $*$-automorphism $\be \colon  B \to B$ is quasi-isometric and let $x \in \s A = C_c(\zz,\s B)$. Then $\si^{-1/2}(x)$ preserves the core $C_c(\zz,\T{Dom}(D))$ for the modular lift $D_\Ga$, and the twisted commutator
\[
D_\Ga \si^{-1/2}(x) - \si^{1/2}(x) D_\Ga \colon  C_c(\zz,\T{Dom}(D)) \to \ell^2(\zz) \hot H
\]
extends to a bounded operator, which we denote by $\pa_\Ga(x) \colon  \ell^2(\zz) \hot H \to \ell^2(\zz) \hot H$.
\end{lemma}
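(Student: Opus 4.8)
The plan is to reduce the entire statement to the algebra generators of $\s A = C_c(\zz,\s B)$, namely the elements $b \in \s B$ together with $U$ and $U^*$, and then to propagate both conclusions by a twisted Leibniz rule. Since $\si^{1/2}$ and $\si^{-1/2}$ are algebra homomorphisms with bounded values (Lemma \ref{l:modular}), a direct manipulation on the core $C_c(\zz,\T{Dom}(D))$ gives, for $x,y \in \s A$ such that $\si^{-1/2}(y)$ preserves the core, the identity
\[
D_\Ga \si^{-1/2}(xy) - \si^{1/2}(xy) D_\Ga = \pa_\Ga(x)\, \si^{-1/2}(y) + \si^{1/2}(x)\, \big( D_\Ga \si^{-1/2}(y) - \si^{1/2}(y) D_\Ga \big),
\]
obtained by inserting and removing $\si^{1/2}(x) D_\Ga \si^{-1/2}(y)$. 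Because $\si^{\pm 1/2}(x)$ are bounded, boundedness of the twisted commutator on generators then forces boundedness on all products, hence on all of $\s A$ by linearity; likewise, core preservation for the generators propagates to every $x \in \s A$, since $\si^{-1/2}$ is an algebra homomorphism and a product of core-preserving operators preserves the core.

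First I would treat $x = b \in \s B$. Here $\si^{1/2}(b) = \si^{-1/2}(b) = \la_\pi(b)$, which preserves the core because $\be^{-m}(b) \in \s B$ and elements of $\s B$ map $\T{Dom}(D)$ into itself for every $m$. A computation on the spanning vectors $e_m \ot \xi$ with $\xi \in \T{Dom}(D)$ then shows that the twisted commutator is the diagonal operator
\[
\pa_\Ga(b)(e_m \ot \xi) = e_m \ot \ka(m)\, d(\be^{-m}(b))\, \xi ,
\]
using $[D,\pi(\be^{-m}(b))]\xi = d(\be^{-m}(b))\xi$ on $\T{Dom}(D)$. Its operator norm equals $\sup_{m \in \zz} \ka(m)\,\| d(\be^{-m}(b))\|_\infty$. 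Splitting the supremum according to the sign of $m$ and inserting $\ka(m) = \la^{-m}$ for $m \geq 0$ and $\ka(m) = \mu^{m}$ for $m < 0$, it becomes the maximum of $\sup_{n \in \nn_0}\la^{-n}\|d(\be^{-n}(b))\|_\infty$ and $\sup_{n \in \nn_0}\mu^{-n}\|d(\be^{n}(b))\|_\infty$. Thus quasi-isometry of $\be$ (Definition \ref{d:quasi}) is precisely what makes $\pa_\Ga(b)$ bounded.

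The decisive case, where I expect the main subtlety to lie, is $x = U$ and $x = U^*$. From Lemma \ref{l:modular} and $\si^{-1/2}(x) = \si^{1/2}(x^*)^*$ one reads off the bounded extensions $\si^{1/2}(U)(e_m\ot\xi) = (\ka(m+1)/\ka(m))^{1/2}\, e_{m+1}\ot\xi$ and $\si^{-1/2}(U)(e_m\ot\xi) = (\ka(m)/\ka(m+1))^{1/2}\, e_{m+1}\ot\xi$. Applying $D_\Ga$ accordingly, both $D_\Ga \si^{-1/2}(U)$ and $\si^{1/2}(U) D_\Ga$ send $e_m\ot\xi$ to $(\ka(m)\ka(m+1))^{1/2}\, e_{m+1}\ot D\xi$, so that $\pa_\Ga(U) = 0$, and symmetrically $\pa_\Ga(U^*) = 0$; these operators preserve the core trivially since they leave the $H$-component untouched. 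In other words, the modular rescaling by $\Ga$ has been engineered exactly so that the horizontal shift becomes a twisted isometry for $D_\Ga$, cancelling the unbounded behaviour of $[\T{diag}(D),\la_\pi(U)]$. Combining the three generator cases with the twisted Leibniz identity of the first paragraph yields both claims for all $x \in \s A$. The only genuine analytic care required is the routine verification that the identities established on the finitely supported vectors $e_m\ot\xi$ do describe bounded adjointable operators and that the compositions with the unbounded $D_\Ga$ and $\Ga^{\pm 1/2}$ are legitimate on the core; this is bookkeeping rather than a real obstacle, the actual content being the quasi-isometry estimate used for $\pa_\Ga(b)$.
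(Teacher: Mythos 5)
Your argument is correct, and it isolates the same decisive estimate as the paper: quasi-isometry of $\be$ is precisely what bounds the diagonal operator $e_m \ot \xi \mapsto e_m \ot \ka(m)\, d(\be^{-m}(b))\xi$. The organization, however, is genuinely different. The paper reduces to the monomials $x = bU^n$ and rests on the single conjugation identity
\[
\big( D_\Ga \si^{-1/2}(x) - \si^{1/2}(x) D_\Ga \big)\big|_{C_c(\zz,\T{Dom}(D))} = \Ga^{1/2}\, \big[\T{diag}(D), \la_\pi(x)\big]\, \Ga^{1/2},
\]
valid on the core because $D_\Ga = \T{diag}(D)\Ga$ and $\si^{\pm 1/2}(x) = \Ga^{\pm 1/2}\la_\pi(x)\Ga^{\mp 1/2}$; since $\T{diag}(D)$ commutes with $\la_\pi(U)$ this factors as $\Ga^{1/2}[\T{diag}(D),\la_\pi(b)]\Ga^{1/2}\,\si^{-1/2}(U^n)$, and both claims of the lemma drop out at once. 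You instead decompose into the algebra generators $b$, $U$, $U^*$ and propagate via the twisted Leibniz rule, computing along the way that $\pa_\Ga(U) = \pa_\Ga(U^*) = 0$ (your explicit formulas $\si^{1/2}(U)(e_m\ot\xi) = (\ka(m+1)/\ka(m))^{1/2}e_{m+1}\ot\xi$ and $\si^{-1/2}(U)(e_m\ot\xi) = (\ka(m)/\ka(m+1))^{1/2}e_{m+1}\ot\xi$ do match the projection formulas in the proof of Lemma \ref{l:modular}, and your split of $\sup_m \ka(m)\|d(\be^{-m}(b))\|_\infty$ into the two half-lines is exactly Definition \ref{d:quasi}). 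The paper's route is the more economical piece of bookkeeping — the vanishing of $\pa_\Ga(U)$ is automatic rather than a computation, and the sandwiched-commutator form is what gets reused directly in the proof of Theorem \ref{cor:BMR-cor} — whereas your route makes the algebraic mechanism behind the twisted derivation property \eqref{eq:starder} fully explicit and would adapt more readily to situations where the algebra is not generated by a single unitary over $\s B$. I see no gap: your handling of core preservation before invoking the Leibniz identity (so that all compositions with the unbounded $D_\Ga$ are legitimate on $C_c(\zz,\T{Dom}(D))$) is exactly the care the statement requires.
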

\begin{proof}
We may assume that $x = b U^n$ for some $b \in \s B$ and $n \in \zz$. It is clear that $\si^{-1/2}(x)$ preserves the core $C_c(\zz,\T{Dom}(D))$, indeed for each $\eta \in C_c(\zz,\T{Dom}(D))$ we have that $\si^{-1/2}(x)(\eta) = \Ga^{-1/2} x \Ga^{1/2}(\eta)$. On the core, we moreover compute that
\[
\begin{split}
\big( D_\Ga \si^{-1/2}(x) - \si^{1/2}(x) D_\Ga \big)(\eta) 
& = \Ga^{1/2} [ \T{diag}(D), \la_\pi(x) ] \Ga^{1/2} (\eta) \\ 
& = \Ga^{1/2} [ \T{diag}(D), \la_\pi(b) ] \Ga^{1/2} \si^{-1/2}(U^n)(\eta) .
\end{split}
\]
It thus suffices to show that the diagonal operator $\Ga^{1/2} [\T{diag}(D),\la_\pi(b)] \Ga^{1/2} \colon  C_c(\zz,\T{Dom}(D)) \to \ell^2(\zz) \hot H$ extends to a bounded operator on $\ell^2(\zz) \hot H$. Now, for each $\xi \in \T{Dom}(D)$ and $m \in \zz$, we have that
\[
\Ga^{1/2} [\T{diag}(D),\la_\pi(b)] \Ga^{1/2}( e_m \ot \xi) = e_m \ot \ka(m) d( \be^{-m}(b))(\xi) ,
\]
and the assumption that $\be \colon  B \to B$ is quasi-isometric (with respect to the quasi-isometry constants $\la,\mu \geq 1$) implies that $\sup_{m \in \zz} \| \ka(m) d(\be^{-m}(b)) \|_\infty < \infty$. This indeed shows that the diagonal operator 
\[
\Ga^{1/2} [\T{diag}(D),\la_\pi(b)] \Ga^{1/2} \colon  C_c(\zz,\T{Dom}(D)) \to \ell^2(\zz) \hot H
\]
extends to a bounded operator and the lemma is proved.
\end{proof}

The linear map $\pa_\Ga \colon  \s A \to \B L(\ell^2(\zz) \hot H)$ is a \emph{twisted} $*$-derivation meaning that
\begin{equation}\label{eq:starder}
\pa_\Ga(xy) = \si^{1/2}(x) \pa_\Ga(y) + \pa_\Ga(x) \si^{-1/2}(y) \q \T{and} \q
\pa_\Ga(x^*) = -  \pa_\Ga(x)^*  
\end{equation}
for all $x,y \in \s A$. 
As in the proof of Lemma \ref{l:circle}, we also have the $*$-derivation $ -i \pa \colon  \s A \to \s A$ defined by $ -i \pa(x)(n) = n \cd x(n)$, which is tightly related to the dual action on the reduced crossed product $\cro{B}$. Combining these two derivations we obtain an interesting semi-norm $L_\Ga \colon  \s A \to [0,\infty)$ defined by
\begin{equation}\label{eq:semungra}
L_\Ga(x) := \max\{ \| \pa_\Ga(x) + \pa(x) \|_\infty, \| \pa_\Ga(x) - \pa(x) \|_\infty\}
\end{equation}
for all $x \in \s A$. \\
In the case where $(\s B,H,D)$ is a unital spectral triple and the $*$-automorphism $\be \colon  B \to B$ is equicontinuous, we may choose $\la = \mu = 1$ so that $\Ga = \T{id} \colon  \ell^2(\zz) \hot H \to \ell^2(\zz) \hot H$. The semi-norm $L_\Ga = L_{\T{id}}$ then coincides exactly with the semi-norm investigated in \cite[Section 3]{BMR:DSS} and can be seen to arise from a unital spectral triple over the reduced crossed product $\cro{B}$, see also \cite{HSWZ:STC,Pat:CST}. In the more general setting where the $*$-automorphism $\be \colon  B \to B$ is only quasi-isometric we still have the semi-norm $L_\Ga \colon  \s A \to [0,\infty)$ and we shall relate it to metrics on the state space of $\cro{B}$ in Section \ref{s:comcro}. In the general context, we note that there is no reason to expect that the semi-norm $L_\Ga$ comes from a (twisted) spectral triple, since the operation $\pa_\Ga$ is a twisted derivation whereas the operation $\pa$ is a (non-twisted) derivation. \\

We end this section by remarking that, if the Hilbert space $H$ carries a $\zz/2\zz$-grading operator $\ga \colon  H \to H$ such that $\pi(b)$ is even for all $b \in B$ and $D \colon  \T{Dom}(D) \to H$ is odd, then the semi-norm $L_\Ga$ should be modified accordingly and be replaced by
\begin{equation}\label{eq:semgra}
L_\Ga(x) := \| \pa_\Ga(x) -i (1 \ot \ga)  \pa(x) \|_\infty
\end{equation}
for all $x \in \s A$. This is due to the different structure of the (unbounded) Kasparov product in the cases $KK_1(\cro{B},B) \ti K^1(B) \to K^0(\cro{B})$ and $KK_1(\cro{B},B) \ti K^0(B) \to K^1(\cro{B})$, which is indeed serving as a guideline for the constructions carried out in the present section. 

\section{Compact quantum metric space structures on crossed products}\label{s:comcro}
We now return to the general setting described in the beginning of  Section \ref{s:noniso}. We thus consider a unital $C^*$-algebra $B$ with  a fixed norm-dense, unital $*$-invariant subspace $V_B\su B$, a $*$-automorphism $\beta\colon B\to B$ with $\beta(V_B)=V_B$ and a $*$-invariant semi-norm $L_B\colon V_B \to [0,\infty)$ satisfying that $L_B(1)=0$. We moreover fix an order preserving norm $\vertiii{\cd} : C_c(\zz) \to [0,\infty)$ which we normalize so that $\vertiii{e_0} = 1$; see Definition \ref{d:vertiii}.
We are going to investigate the semi-norm 
\[
L_A := \max\{L_1,L_2\} \colon  V_A \to [0,\infty),
\]
where we recall that $V_A = C_c(\zz,V_B)$ is a norm-dense complex vector subspace of the reduced crossed product $A = \cro{B}$. The individual semi-norms $L_1$ and $L_2 \colon  V_A \to [0,\infty)$ are given by the explicit formulae in \eqref{eq:semi}. Our first main result is the following:

\begin{thm}\label{t:quacross}
Suppose that the densely defined semi-norm $L_B \colon  V_B \to [0,\infty)$ is lower semi-continuous and that $(B,L_B)$ is a compact quantum metric space (in the sense of Definition \ref{d:cqms}). Suppose moreover that the $*$-automorphism $\be \colon  B \to B$ satisfies that $\be(V_B) = V_B$. Then the semi-norm $L_A \colon  V_A \to [0,\infty)$ is lower semi-continuous and $(\cro{B},L_A)$ is a compact quantum metric space. Moreover, letting $r_B$ and $r_A \geq 0$ denote the radii of $(B,L_B)$ and $(\cro{B},L_A)$ we have the estimate $r_A \leq r_B + \frac{\pi}{2}$. 
\end{thm}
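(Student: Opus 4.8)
The plan is to verify the equivalent criteria of Theorem \ref{t:quamet}, handling the radius estimate and the total-boundedness criterion separately, and exploiting throughout that $L_1 = L_l$ is controlled by the dual action $\al$ (Lemma \ref{l:circle}). Since $V_A = C_c(\zz,V_B)$ is norm-dense, $\ast$-invariant and contains the unit with $L_A(1) = 0$, and since $L_A = \max\{L_1,L_2\}$ is a maximum of two lower semi-continuous semi-norms (by Lemma \ref{l:circle} and Lemma \ref{l:lowsemi}), the standing hypotheses of Definition \ref{d:cqms} are in place and $L_A$ is lower semi-continuous. It then remains to produce the radius bound and to establish total boundedness of $q(\B B_1(L_A^{\T{sa}})) \su \cro{B}^{\T{sa}}/\rr 1$.

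For the radius I would split a selfadjoint $\eta \in V_A$ along the conditional expectation $\tau_0 \colon \cro{B} \to B$. Averaging over the circle gives $\int_{\B T} \al_z(\eta)\, dz = \eta(0) = \tau_0(\eta)$, so that $\eta - \tau_0(\eta) = \int_{\B T}(\eta - \al_z(\eta))\, dz$, and using $\|\al_z(\eta) - \eta\| \leq l(z) L_l(\eta) = l(z) L_1(\eta)$ together with $\int_{\B T} l(z)\, dz = \frac{1}{2\pi}\int_{-\pi}^{\pi}|t|\, dt = \frac{\pi}{2}$ one obtains $\|\eta - \tau_0(\eta)\| \leq \tfrac{\pi}{2}\,L_1(\eta)$. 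On the other hand $\eta(0) = \tau_0(\eta) \in V_B$ is selfadjoint, with $L_B(\eta(0)) \leq \vertiii{L_B \ci \eta} \leq L_2(\eta)$ by order-preservation and the normalization $\vertiii{e_0}=1$, so the radius of $(B,L_B)$ yields $\inf_{\la}\|\eta(0) - \la 1\| \leq r_B\, L_B(\eta(0))$. Adding the two contributions gives $\inf_{\la}\|\eta - \la 1\| \leq (r_B + \tfrac{\pi}{2})\,L_A(\eta)$, which directly bounds $q(\B B_1(L_A^{\T{sa}}))$ in $\cro{B}^{\T{sa}}/\rr 1$ and, once the compact quantum metric space property is established, upgrades via Proposition \ref{p:radbou} to the estimate $r_A \leq r_B + \tfrac{\pi}{2}$.

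For total boundedness I would use a Fej\'er approximation along the dual action. Because $\al$ is an isometric action, the map $z \mapsto \al_z(\eta)$ is Lipschitz with constant $L_l(\eta) = L_1(\eta)$, so the Fej\'er means $F_N(\eta) := \int_{\B T} K_N(z)\al_z(\eta)\, dz = \sum_{|n|\leq N}\big(1 - \tfrac{|n|}{N+1}\big)\eta(n)U^n$ satisfy $\|\eta - F_N(\eta)\| \leq \ep_N\,L_1(\eta)$, where $\ep_N := \int_{\B T} K_N(z) l(z)\, dz \to 0$ since the first moment of the Fej\'er kernel vanishes in the limit. Each $F_N$ is supported in the finitely many modes $|n|\leq N$ and preserves selfadjointness, so it suffices that each coefficient $\eta(n)$ ranges over a norm-totally-bounded subset of $B$. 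Here the radius bound enters: replacing $\eta$ by $\eta - \la_0 1$ for suitable $\la_0$ (which alters neither $L_1$, nor $F_N$ in the modes $n \neq 0$, nor the class $q(\eta)$) we may assume $\|\eta\| \leq r_A + 1$, whence $\|\eta(n)\| = \|\tau_{-n}(\eta)\| \leq r_A + 1$. Combined with $L_B(\eta(n)) \leq \vertiii{e_n}^{-1}$ this places every $\eta(n)$ in a norm-bounded $L_B$-ball of $B$, which is totally bounded because $(B,L_B)$ is a compact quantum metric space (cf.\ the sets $\C D_R(V_B^{\T{sa}})$ following \eqref{eq:ballinter}). Hence $\{F_N(\eta) : L_A(\eta)\leq 1\}$ is totally bounded in $\cro{B}$, and a routine $\ep/2$-argument using $\|\eta - F_N(\eta)\| \leq \ep_N \to 0$ shows that $q(\B B_1(L_A^{\T{sa}}))$ is totally bounded; Theorem \ref{t:quamet} then gives that $(\cro{B},L_A)$ is a compact quantum metric space.

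I expect the main obstacle to be precisely this total-boundedness step, together with its coupling to the radius estimate: controlling the genuine $B$-norm of the Fourier coefficients $\eta(n)$ — and not merely their norm modulo scalars — is what forces the passage to a good representative modulo constants, and it is the reason one must invoke the \emph{norm}-total-boundedness of bounded $L_B$-balls rather than only the total boundedness of $q_B(\B B_1(L_B))$ in $B/\cc 1$. By contrast, the Fej\'er estimate itself is routine once $L_1 = L_l$ has been identified with a genuine Lipschitz constant for the dual action, which is exactly the content of Lemma \ref{l:circle}.
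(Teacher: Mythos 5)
Your proposal is correct, but it takes a genuinely different route from the paper: where you unpack everything by hand, the paper simply verifies the four hypotheses of Li's general result \cite[Theorem 4.1]{Li:DCQ} for the dual circle action $\al$ --- namely (i) $L_l\leq L_A$ (Lemma \ref{l:circle}), (ii) the contractivity estimate $L_A(\al_\varphi(\eta))\leq\|\varphi\|_1 L_A(\eta)$ obtained from $\al$-invariance of $L_A$ and lower semi-continuity via \cite[Lemma 4.3]{Li:DCQ}, (iii) total boundedness of the unit $L_A$-ball inside each spectral subspace $A_n$, $n\neq 0$, using the inclusion into $\io_n\big(\B B_{1/\nvert{e_n}}(L_B)\cap\B B_{1/|n|}(\|\cd\|_B)\big)$ (note the paper gets the norm bound $\|\eta(n)\|\leq 1/|n|$ on the coefficients directly from $L_1$, via $|n|\,\|\eta(n)\|=\|\tau_{-n}(\pa \eta)\|\leq L_1(\eta)$, rather than from the radius), and (iv) the zero-mode condition, which is just the hypothesis that $(B,L_B)$ is a compact quantum metric space; the radius bound $r_A\leq r_B+\tfrac{\pi}{2}$ then drops out of Li's theorem with $\tfrac{1}{2\pi}\int_0^{2\pi}l(e^{it})\,dt=\tfrac{\pi}{2}$. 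Your Fej\'er-kernel argument and the splitting $\eta=(\eta-\tau_0(\eta))+\tau_0(\eta)$ are precisely the mechanism hidden inside Li's (and Rieffel's \cite[Section 8]{Rie:GHD}) proof, so what you gain is a self-contained argument that avoids the $\|\varphi\|_1$-contractivity condition entirely by working coefficient-wise with $c_{N,n}\in[0,1]$; what you lose is the reusability of the general criterion, which the paper exploits again later (e.g.\ in Lemma \ref{lem:passage-to-band}, where \cite[Lemma 4.4]{Li:DCQ} provides the uniform-in-$t$ truncation). One small presentational slip: in the total-boundedness step you normalize to $\|\eta\|\leq r_A+1$ before $r_A$ is known to be finite; this is harmless because your first paragraph already establishes the explicit bound $\inf_\la\|\eta-\la 1\|\leq(r_B+\tfrac{\pi}{2})L_A(\eta)$, so you should quote $r_B+\tfrac{\pi}{2}+1$ there instead, but no mathematical content is affected.
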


The proof of Theorem \ref{t:quacross} relies on a general theorem due to Li, \cite[Theorem 4.1]{Li:DCQ}, which in turn is related to methods developed by Rieffel in \cite[Theorem 2.3]{Rie:MSA}. Before presenting our proof we need a bit more notation: For each $n \in \zz$, we have the character $\varphi_n \in \hat{\B T}$ given by $\varphi_n(z) := z^n$ for all $z \in \B T$. For any finite linear combination of characters $\varphi = \sum_{n = -\infty}^\infty \la_n \varphi_n$, we define the linear map $\al_\varphi \colon  \cro{B} \to \cro{B}$ by
\[
\al_{\varphi}(x) := \frac{1}{2 \pi} \int_0^{2 \pi} \varphi( \exp(it)) \al_{\exp(it)}(x) dt 
\]
for all $x \in \cro{B}$, where $\al \colon  \B T \ti \cro{B} \to \cro{B}$ denotes the dual action introduced in \eqref{eq:dual}. By \eqref{eq:dual-action},  for each $x = \sum_{n = -\infty}^\infty b_n U^n \in C_c(\B Z,B)$ it holds that
\[
\al_{\varphi}( x ) = \sum_{n = -\infty}^\infty \la_{-n} \cd b_n U^n 
\]
and thus that $\al_{\varphi}(V_A) \su V_A$. For each $n \in \zz$, we let $A_n \su A$ denote the spectral subspace associated to the action $\al \colon  \B T \ti \cro{B} \to \cro{B}$. Explicitly, we have that
\[
A_n = \big\{ x \in A \mid \al_z(x) = z^n \cd x \T{ for all } z \in \B T \big\}
= \{ b U^n \mid b \in B \} = \T{Im}( \al_{\varphi_{-n}}),
\]
where the middle equation follows from \cite[Proposition 7.8.9]{Ped:CAA}.
As a Banach space, the spectral subspace $A_n \su A$ is thus isometrically isomorphic to $B$ via the linear isometry $\io_n \colon  B \to A_n$ defined by $\io_n(b) := b U^n$. We remark that the inverse of $\io_n \colon  B \to A_n$ is the map $\tau_{-n}|_{A_n} \colon  A_n \to B$, which we introduced before Lemma \ref{l:lowsemi}.

\begin{proof}[Proof of Theorem \ref{t:quacross}]
Lower semi-continuity of the semi-norm $L_A = \max\{L_1,L_2\} \colon  V_A \to [0,\infty)$ follows immediately from Lemma \ref{l:lowsemi} and Lemma \ref{l:circle}. To prove the remainder of the theorem we apply \cite[Theorem 4.1]{Li:DCQ} and need to verify the conditions of that theorem. This suffices to prove the present theorem. The relevant conditions are as follows:
\begin{enumerate}
\item there exists a constant $C > 0$ such that $L_l(\eta) \leq C \cd L_A(\eta)$ for all $\eta \in V_A$;
\item for any finite linear combination of characters $\varphi$ it holds that
\[
L_A( \al_\varphi(\eta)) \leq \frac{1}{2\pi}\int_0^{2\pi} | \varphi(\exp(it)) | dt \cd L_A(\eta) =\|\varphi\|_1\cdot L_A(\eta) 
\]
for all $\eta \in V_A$;
\item for each $n \in \B Z \sem \{0\}$, the subset $\big\{ x \in A_n \cap V_A \mid L_A(x) \leq 1 \big\} \su A_n$ is totally bounded;
\item the pair $(A_0, L_A|_{V_A \cap A_0})$ is a compact quantum metric space.
\end{enumerate}
Condition $(1)$, with $C = 1$, follows immediately from Lemma \ref{l:circle} and the definition of $L_A$. To prove condition $(2)$ we let $\varphi = \sum_{n = -\infty}^\infty \la_n \varphi_n$ be a finite linear combination of characters. We know from Lemma \ref{l:lowsemi} and Lemma \ref{l:circle} that $L_A = \max\{L_1,L_2\} \colon  V_A \to [0,\infty)$ is lower semi-continuous and we have remarked earlier that $\al_{\varphi}(V_A) \su V_A$. Recall next that $\al_z \colon  \cro{B} \to \cro{B}$ is an isometry for all $z \in \B T$  from which it follows that $L_1$ is $\alpha$-invariant, and $L_2$ is also seen to be $\alpha$-invariant since $|z|=1$, so it follows that $L_A(\al_z(\eta)) = L_A(\eta)$ for all $z \in \B T$ and $\eta \in V_A$. The relevant estimate is now a consequence of \cite[Lemma 4.3]{Li:DCQ}. To prove condition $(3)$ we let $n \in \B Z \sem \{0\}$ be given and notice that 
\[
L_A(\xi U^n) = \max\left\{ |n| \cd \| \xi \|_B , L_B(\xi) \vertiii{e_n},  L_B( \be^{-n}(\xi^*) ) \vertiii{e_{-n}} \right\} 
\]
for all $\xi \in V_B$. This observation implies that we have the inclusion
\[
\big\{ x \in A_n \cap V_A \mid L_A(x) \leq 1 \big\} \su \io_n\big( \B B_{ 1/\vertiii{e_n} }(L_B) \cap \B B_{1/|n|}( \| \cd \|_B ) \big) ,
\]
where we use the notation for balls introduced in \eqref{eq:balls}. Since $(B,L_B)$ is a compact quantum metric space by assumption we know that the intersection of balls $\B B_{1/\vertiii{e_n}}(L_B) \cap \B B_{1/|n|}( \| \cd \|_B ) \su B$ is totally bounded so the subset $\big\{ x \in A_n \cap V_A \mid L_A(x) \leq 1 \big\} \su A_n$ must be totally bounded as well. Condition $(4)$ is another way of saying that $( B, L_B)$ is a compact quantum metric space.  In fact, our normalization condition $\vertiii{e_0} = 1$ implies that $L_B = L_A \ci \io \colon V_B \to [0,\infty)$. This shows that $(\cro{B},L_A)$ is a compact quantum metric space. The upper bound on the radius $r_A \geq 0$ is also a consequence of \cite[Theorem 4.1]{Li:DCQ}, indeed we obtain that 
\[
r_A \leq r_B + \frac{1}{2\pi} \int_0^{2\pi} l( \exp(it)) dt = r_B + \frac{\pi}{2},
\]
where the length function $l \colon  \B T \to [0,\infty)$ is given explicitly in \eqref{eq:length}. This ends the proof of the theorem.
\end{proof}

We now apply the above Theorem \ref{t:quacross} to the semi-norm coming from the derivations described in Subsection \ref{ss:twist}; in particular we generalize a result of Bellissard, Marcolli and Reihani, see \cite[Proposition 3]{BMR:DSS}. Indeed we here consider quasi-isometric actions instead of the more restrictive equicontinuous actions corresponding to the case where the modular operator $\Ga$ equals the identity operator. Notice that the modular operator $\Ga \in \B L(\ell^2(\zz) \hot H)$ depends on a choice of quasi-isometry constants $\la,\mu \geq 1$ even though this is not reflected in the notation.

%

\begin{thm}\label{cor:BMR-cor}
Suppose that $(\s B,H,D)$ is a spectral triple, where the associated $*$-homomorphism $\pi \colon  B \to \B L(H)$ is unital and faithful, and suppose that the semi-norm $L_B \colon  \s B \to [0,\infty)$ given by $L_B(b) := \| d(b)\|_\infty$ turns $(B,L_B)$ into a compact quantum metric space. Suppose moreover that the $*$-automorphism $\be \colon  B \to B$ is quasi-isometric (in the sense of Definition \ref{d:quasi}). Then for any choice of quasi-isometry constants $\la,\mu \geq 1$ it holds that $(\cro{B},L_\Ga)$ is a compact quantum metric space, where the semi-norm $L_{\Ga} \colon  C_c(\zz,\s B) \to [0,\infty)$ is given by \eqref{eq:semungra} in the ungraded case and by \eqref{eq:semgra} in the graded case.  
\end{thm}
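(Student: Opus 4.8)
\emph{Strategy.} The plan is to derive the statement from Theorem \ref{t:quacross} by comparing $L_\Ga$ with the concrete semi-norm $L_A = \max\{L_1,L_2\}$ attached to a suitably chosen order preserving norm $\vertiii{\cd}$, and to observe that only the \emph{one-sided} bound $L_A \leq L_\Ga$ is needed. Indeed, if $L_A \leq L_\Ga$ for some such $\vertiii{\cd}$, then $\B B_1(L_\Ga) \su \B B_1(L_A)$; since $(\cro{B},L_A)$ is a compact quantum metric space by Theorem \ref{t:quacross}, the set $q(\B B_1(L_A))$ is totally bounded, hence so is $q(\B B_1(L_\Ga))$, and Theorem \ref{t:quamet} then shows that $(\cro{B},L_\Ga)$ is a compact quantum metric space. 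The auxiliary hypotheses are routine: $L_\Ga(1)=0$ and $L_\Ga(x^*)=L_\Ga(x)$ follow from the twisted $*$-derivation identity \eqref{eq:starder} (together with the fact that $\pa(x)$ commutes with $1\ot\ga$ in the even case), $V_A$ is dense by construction, and $L_B(b)=\|d(b)\|_\infty$ is lower semi-continuous because it equals a supremum of the $b$-continuous functionals $b \mapsto \inn{D\eta,\pi(b)\xi} - \inn{\eta,\pi(b)D\xi}$ over unit vectors $\xi,\eta \in \T{Dom}(D)$, so Theorem \ref{t:quacross} applies with $V_B = \s B$.

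\emph{Step 1: controlling $\pa$ and $\pa_\Ga$ by $L_\Ga$.} The first task is to show $\|\pa(x)\|_\infty \leq L_\Ga(x)$ and $\|\pa_\Ga(x)\|_\infty \leq L_\Ga(x)$ for all $x \in \s A$. In the odd case this is the elementary inequality $\max\{\|S\|,\|T\|\} \leq \max\{\|S+T\|,\|S-T\|\}$, valid for any bounded operators, applied to $S=\pa_\Ga(x)$ and $T=\pa(x)$. In the even case one uses that $1\ot\ga$ is a self-adjoint unitary which anticommutes with the odd operator $\pa_\Ga(x)$ (recall $D$ is odd while $\pi(b)$, $\la_\pi(U)$ and $\Ga$ are even) and commutes with the even operator $\pa(x)$; setting $T' := \pa_\Ga(x) - i(1\ot\ga)\pa(x)$, the odd and even parts $\frac{1}{2}\big(T' \mp (1\ot\ga)T'(1\ot\ga)\big)$ equal $\pa_\Ga(x)$ and $-i(1\ot\ga)\pa(x)$ respectively, each of norm at most $\|T'\|_\infty = L_\Ga(x)$ since $1\ot\ga$ is unitary, whence $\|\pa_\Ga(x)\|_\infty,\|\pa(x)\|_\infty \leq L_\Ga(x)$.

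\emph{Step 2: extracting the block Lip-norms.} The crux is to recover the individual quantities $L_B(x(n))=\|d(x(n))\|_\infty$ from $\|\pa_\Ga(x)\|_\infty$. Writing $Q_j$ for the orthogonal projection of $\ell^2(\zz)\hot H$ onto $e_j \ot H$ and using $\pa_\Ga(x(n)U^n) = \Ga^{1/2}[\T{diag}(D),\la_\pi(x(n))]\Ga^{1/2}\si^{-1/2}(U^n)$ together with the weight formula $\si^{-1/2}(U^n)(e_m \ot \xi) = \ka(m)^{1/2}\ka(m+n)^{-1/2}\, e_{m+n}\ot\xi$ (valid on the core, since $\si^{-1/2}(y)=\Ga^{-1/2}\la_\pi(y)\Ga^{1/2}$ there), one checks that among the terms of $x=\sum_{n'} x(n')U^{n'}$ only $n'=n$ contributes to the $(0,-n)$-block, and that
\[
Q_0\, \pa_\Ga(x)\, Q_{-n} = \ka(-n)^{1/2}\, d(x(n)),
\]
under the identifications $e_{-n}\ot H \cong H \cong e_0 \ot H$. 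Compressing can only decrease the norm, so $\ka(-n)^{1/2} L_B(x(n)) \leq \|\pa_\Ga(x)\|_\infty \leq L_\Ga(x)$ for every $n$, and applying the same to $x^*$ yields $\ka(-n)^{1/2} L_B(x^*(n)) \leq L_\Ga(x)$ as well.

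\emph{Step 3: assembling the comparison.} These bounds dictate the choice of order preserving norm: put $\vertiii{h} := \sup_{n \in \zz} \ka(-n)^{1/2}|h(n)|$ on $C_c(\zz)$, which is order preserving and normalized since $\ka(0)=1$. With this choice, Step 2 gives $L_2(x)=\max\{\vertiii{L_B \ci x},\vertiii{L_B \ci x^*}\} \leq L_\Ga(x)$, while Step 1 gives $L_1(x)=\|\pa(x)\|_\infty \leq L_\Ga(x)$, so that $L_A=\max\{L_1,L_2\} \leq L_\Ga$ and the reduction in the first paragraph concludes the proof. The main obstacle is the combination of the even-case bookkeeping in Step 1 with the block-extraction identity in Step 2; I also expect the weights $\ka(-n)^{1/2}$ in $\vertiii{\cd}$ to be genuinely necessary, as the unweighted supremum norm would fail to be dominated by $L_\Ga$ once $\la$ or $\mu$ exceeds $1$.
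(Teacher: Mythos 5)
Your proposal is correct and follows the paper's overall reduction exactly: choose a weighted supremum norm $\vertiii{\cd}$ on $C_c(\zz)$, prove the one-sided domination $L_A=\max\{L_1,L_2\}\leq L_\Ga$, and conclude via Theorem \ref{t:quacross} together with the total-boundedness criterion of Theorem \ref{t:quamet}. Where you genuinely differ is in the proof of the central inequality bounding $L_B(x(n))$ by $\|\pa_\Ga(x)\|_\infty$. The paper extracts the $n$-th Fourier band via the conditional expectation $\tau_0(\pa_\Ga(x)U^{-n})$, factorizes $g^n=D_nU^n$ for $g=\si^{-1/2}(U)$ with $D_n$ diagonal, and invokes the operator inequality $g^{|n|}\big(g^{|n|}\big)^*\geq c^{2|n|}$ with $c=\min\{\la^{-1/2},\mu^{-1/2}\}$, arriving at the weight $c^{|n|}$. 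You instead compress to the single corner $Q_0\,\pa_\Ga(x)\,Q_{-n}$ and compute it exactly as $\ka(-n)^{1/2}\,d(x(n))$; this is more elementary (no positivity argument and no factorization of $g^n$) and yields the slightly larger weight $\ka(-n)^{1/2}\geq c^{|n|}$, which only changes the choice of order preserving norm and not the conclusion. Your Step 1 is also somewhat more careful than the paper's corresponding remark: in the even case the bound $\|\pa_\Ga(x)\|_\infty\leq L_\Ga(x)$ is not a bare triangle inequality but requires the averaging against $1\ot\ga$ that you spell out (using that $\pa_\Ga(x)$ anticommutes and $\pa(x)$ commutes with the grading), and your explicit verification of $L_\Ga(\xi^*)=L_\Ga(\xi)$, $L_\Ga(1)=0$ and of the lower semi-continuity of $L_B$ (which the paper delegates to \cite[Proposition 3.7]{Rie:MSS}) fills in hypotheses that the paper leaves implicit when it appeals to Theorem \ref{t:quamet}.
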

\begin{proof}
We remark that $L_B \colon  \s B \to [0,\infty)$ is automatically lower semi-continuous, see \cite[Proposition 3.7]{Rie:MSS}. Let the quasi-isometry constants $\la,\mu \geq 1$ be given. By Theorem \ref{t:quamet} we just need to check that $q( \B B_1(L_{\Ga})) \su \cro{B} / \cc 1$ is totally bounded. To this end, we define an order preserving norm $\vertiii{\cd} : C_c(\zz) \to [0,\infty)$ by the formula
\[
\vertiii{ h } := \sup_{n \in \zz} c^{|n|} | h(n) | , 
\]
where $c := \min\{ \la^{-1/2},\mu^{-1/2} \}$. Notice also that $\vertiii{e_0} = 1$, meaning that $\vertiii{\cdot}$ is normalized as required in Section \ref{s:noniso}. We let $L_A \colon \s A \to [0,\infty)$ denote the corresponding semi-norm so that $(A,L_A)$ is a compact quantum metric space by Theorem \ref{t:quacross}. It therefore suffices to show that $L_A(x) \leq L_{\Ga}(x)$ for all $x \in \s A = C_c(\zz,\s B)$, see Theorem \ref{t:quamet}.  Let $x \in \s A$ be given. An easy application of the triangle inequality shows that $\|\pa_{\Ga}(x)\|_\infty, \|\pa(x)\|_\infty\leq L_{\Ga}(x)$, and we moreover have that $L_1(x) = \| \pa(x) \|_\infty$, so it suffices to show that $L_2(x) \leq \| \pa_{\Ga}(x) \|_\infty$.  We now claim that 
\begin{equation}\label{eq:estimate}
c^{|n|} L_B( x(n)) = c^{|n|} \| d( x(n) ) \|_\infty \leq \| \pa_{\Ga}(x) \|_\infty \q \T{for all } n \in \zz .
\end{equation}
Establishing this claim proves the theorem; indeed, since $\s A$ is a $*$-algebra this will imply that 
\[
\begin{split}
L_2(x) & = \max\big\{  \bvert{ L_B \ci x} , \bvert{ L_B \ci x^*} \big\} \\
& = \max\big\{ \sup_{n \in \zz }c^{|n|} \| d(x(n)) \|_\infty , \sup_{n \in \zz } c^{|n|} \| d(x^*(n)) \|_\infty  \big\} \\
& \leq \max\big\{ \| \pa_\Ga(x) \|_\infty , \| \pa_\Ga(x^*) \|_\infty \big\} = \| \pa_\Ga(x) \|_\infty ,
\end{split}
\]
where the last identity follows from \eqref{eq:starder}. Let thus $n \in \zz$ be given. We define $g := \si^{-1/2}(U)$ and notice that $g^{-1} = \si^{-1/2}(U^*)$. It follows from the explicit formulae in the proof of Lemma \ref{l:modular} that
\begin{alignat*}{3}
g & = \la^{1/2} U P + \mu^{-1/2} U (1 - P) \q  &&\T{and} \q g^{-1} &&= \la^{-1/2} P U^* + \mu^{1/2} (1 - P) U^* \\
g^* & = \la^{1/2} P U^* + \mu^{-1/2}(1 - P) U^* \q  && \T{and} \q ( g^{-1} )^* &&= \la^{-1/2} U P  + \mu^{1/2} U (1 - P) ,
\end{alignat*}
where $P\in \B{L} (\ell^2(\ZZ) \hot H)$ denotes the orthogonal projection onto $\ell^2(\nn_0) \hot H$. 
In particular, we have that
\[
c^{2 |n| } = \min\{ \la^{-|n|},\mu^{-|n|} \} \leq g^{|n|}\left(g^{|n|}\right)^*, g^{-|n|} (g^{-|n|})^* . 
\]

Secondly, we see that $g  = \la^{1/2} U P + \mu^{-1/2} U (1 - P)$ has the form $D_1U$ for a bounded diagonal operator $D_1$ and since conjugating $D_1$ with $U$ results in another diagonal operator we conclude that there exists some bounded diagonal operator $D_n$ such that $g^n=D_nU^n$ for $n\geq 0$, and the same reasoning holds when $n<0$. 
Hence
\begin{align*}
\pa_\Gamma(x)) &=\sum_{n=-\infty}^\infty\ov{ \Ga^{1/2} \big[\T{diag}(D),\la_\pi(x(n))\big] \Ga^{1/2}} \cd g^n \\
& =\sum_{n=-\infty}^\infty\ov{ \Ga^{1/2} \big[\T{diag}(D),\la_\pi(x(n))\big] \Ga^{1/2}} \cd D_n U^n,  
\end{align*}
where the operator $\ov{ \Ga^{1/2} [\T{diag}(D),x(n)] \Ga^{1/2}} \cd D_n$ is diagonal.
We may thus estimate as follows
\[
\begin{split}
& c^{2 |n|} \| d( x(n) ) \|_\infty^2 \leq c^{2 |n|}  \sup_{k\in \zz} \big\| \kappa(k) d\big(\beta^{-k}(x(n))\big)  \big\|_\infty^2  
=c^{2 |n|} \big\| \ov{ \Ga^{1/2} \big[\T{diag}(D),\la_\pi(x(n))\big] \Ga^{1/2}} \big\|_\infty^2 \\
& \q \leq \big\| \ov{ \Ga^{1/2} \big[\T{diag}(D),\la_\pi(x(n))\big] \Ga^{1/2}} \cd g^n \big\|_\infty^2 
= \big\| \ov{ \Ga^{1/2} \big[\T{diag}(D),\la_\pi(x(n))\big] \Ga^{1/2}} \cd D_n \big\|_\infty^2 \\
& \q = \| \tau_0( \pa_\Ga(x) U^{-n}) \|_\infty^2 \leq \|\pa_\Ga( x ) \|_\infty^2 ,
\end{split}
\]
where $\tau_0 : \B L(\ell^2(\zz) \hot H) \to \B L(\ell^2(\zz) \hot H)$ denotes the contraction which puts all off-diagonal elements equal to zero. This proves \eqref{eq:estimate} and hence the result.
\end{proof}

\begin{remark}\label{rem:diff}
In this remark we return to the context of Example \ref{ex:diff}. The ellipticity assumption on $\s D \colon \Ga^\infty(M,E) \to L^2(M,E)$ implies that the semi-norm $L_D \colon C^\infty(M) \to [0,\infty)$ is equivalent to the semi-norm $L_{C(M)} \colon C^\infty(M) \to [0,\infty)$ defined by
\[
L_{C(M)}(f) := \sup_{p \in M}\| df(p) \|_{T^*_pM} .
\]
If we furthermore assume that the compact Riemannian manifold $M$ is connected we shall see in Proposition \ref{p:quametrie} that the pair $(C(M),L_{C(M)})$ is a $C^*$-algebraic compact quantum metric space. Using the equivalence of semi-norms this entails that $(C(M), L_D)$ is a $C^*$-algebraic compact quantum metric space as well. We thereby obtain a large class of examples satisfying the assumptions of Theorem \ref{cor:BMR-cor}.
\end{remark}



\section{A uniform criterion for quantum metric convergence}\label{s:unicri}

In this section we study a general uniform criterion for quantum Gromov-Hausdorff metric convergence of a family of order unit compact quantum metric spaces sitting as real vector subspaces of the fibres of a continuous field of unital $C^*$-algebras. This uniform criterion will then be applied to crossed products in Section \ref{sec:qgh-convergence-for-crossed-products}. Our criterion is related to, but different from, Li's uniform criterion introduced in \cite[Definition 7.9]{Li:GH-dist} with the aim of proving  quantum Gromov-Hausdorff metric convergence of $\theta$-deformations. Indeed, we are mainly controlling the variation of the semi-norms in a uniform fashion whereas the $C^*$-norms are only required to be uniformly equivalent to a  fixed reference norm, so that our setup in that respect has some similarity with \cite[Section 10]{Rie:GHD}. The motivation for introducing this hybrid of Li's and Rieffel's notions stems from our main example, studied in detail in Section \ref{sec:diff-of-riemannian}, consisting of crossed products arising from a  family of diffeomorphisms of a Riemannian manifold, where we could not see how to apply Li's uniform criterion, but, as the reader shall see, we can indeed single out a reference norm. In fact, such a reference norm exists in the full generality for fields of crossed products by the integers, at least when working with an upper bound on the number of Fourier coefficients;  see e.g.~Section \ref{sec:qgh-convergence-for-crossed-products} for more on this.\\

Throughout this section, we let $\{ A_t\}_{t \in T}$ be a continuous field of unital $C^*$-algebras over a compact Hausdorff space $T$ arising from an injective unital $*$-homomorphism $j \colon  C(T) \to \C Z(A)$, where $A$ is a unital $C^*$-algebra, see \cite{Rie:CFG} and \cite{Bla:DCH} for details.
We denote the norm, unit, and involution in $A_t$ by $\|\cdot\|_t$, $1_{A_t}$ and $\ast_t$ respectively.
Suppose that $V \su A$ is a complex vector subspace such that
\begin{enumerate}
\item $\T{ev}_t \colon  V \to A_t$ is injective for all $t \in T$;
\item $1_A \in V$ and $V_t := \T{ev}_t(V)$ satisfies that $\xi^{*_t} \in V_t$ for all $\xi \in V_t$.
\end{enumerate}
For each $t \in T$, we then have the real part
\[
V_t^{\T{sa}} := \big\{ \xi \in V_t \mid \xi^{*_t} = \xi \big\} \su A_t^{\T{sa}},
\]
which becomes an order unit space with order unit $1_{A_t} \in V_t^{\T{sa}}$ and partial order coming from $A_t^{\T{sa}}$. For each $t \in T$, we fix a semi-norm $L_t \colon  V_t \to [0,\infty)$ satisfying that $L_t( 1_{A_t}) = 0$ and $L_t(\xi^{*_t}) = L_t(\xi)$ for all $\xi \in V_t$.
We emphasize the slight subtlety that our subspace $V \su A$ need not be invariant under the involution $*$ on $A$ even though the individual fibres $V_t \su A_t$ are invariant under the respective involutions $\ast_t$, so that we have an induced involution $*_t \colon  V \to V$ for every $t \in T$. The curious reader may cast a glance at the paragraph preceding Lemma \ref{lem:passage-to-band} to see how this situation arises naturally in the setting of crossed products.
For each $t \in T$, we denote the norm on $V_t$ coming from the $C^*$-norm on the fibre $A_t$ by $\| \cd \|_t \colon  V_t \to [0,\infty)$.
 Remark that the unital complex subspaces $V_t \su A_t$ need not be norm-dense. For an element $x \in A$, we apply the notation $x_t := \T{ev}_t(x)$ for all $t \in T$. 

\begin{dfn}\label{d:unicont}
We say that $\{(V_t^{\T{sa}},L_t^{\T{sa}})\}_{t \in T}$ is a \emph{uniform family of order unit compact quantum metric spaces} when 
\begin{enumerate}
\item $(V_t^{\T{sa}},L_t^{\T{sa}})$ is an order unit compact quantum metric space for all $t \in T$;
\item the map $t \mapsto L_t(y_t)$ is continuous on $T$ for all $y \in V$;
\item for each $t_0 \in T$ and $\ep > 0$ there exists an open neighbourhood $U \su T$ of $t_0$ such that
\[
(1 - \ep) L_{t_0}(y_{t_0} ) \leq L_t( y_t) \q \T{for all } y \in V \T{ and }t \in U;
\]
\item there exist a norm $\| \cd \|_\star \colon  V \to [0,\infty)$ and a constant $C > 0$ such that $\| \xi^{*_t} \|_\star = \| \xi \|_\star$ and 
\begin{equation}\label{eq:normfield}
\frac{1}{C} \cd \| \xi \|_\star \leq \| \xi_t \|_t \leq C \cd \| \xi \|_\star 
\end{equation}
for all $\xi \in V$ and all $t \in T$.
\end{enumerate}
\end{dfn}

Consider  a uniform family $\{ (V_t^{\T{sa}},L_t^{\T{sa}}) \}_{t \in T}$ of  order unit compact quantum metric spaces and fix a reference norm $\| \cd \|_\star \colon  V \to [0,\infty)$  and a constant $C > 0$ according to the definition. We may think of $\| \cd \|_\star$ as a norm on $V_t$ for each $t \in T$ via the $\cc$-linear isomorphism $\T{ev}_t \colon  V \to V_t$ and it then holds that 
\begin{equation}\label{eq:invpnt}
\| \xi_t \|_\star = \| \xi \|_\star = \| \xi_s \|_\star \q \T{for all }t, s \in T\T{ and }\xi \in V.
\end{equation}
For a fixed $r \geq 0$, we are interested in the two intersections of balls:
\[
\begin{split}
 \B B_1(L^{\T{sa}}_t) \cap \B B_r( \| \cd \|_\star) &:= \big\{ \xi \in V_t^{\T{sa}} \mid L_t^{\T{sa}}(\xi) \leq 1 , \| \xi \|_\star \leq r \big\} \\
 \B B_1(L_t) \cap \B B_r( \| \cd \|_\star) & := \big\{ \xi \in V_t \mid L_t(\xi) \leq 1 , \| \xi \|_\star \leq r \big\},
\end{split}
\]
as the parameter $t \in T$ varies. We are here explicitly writing out both these intersections because the difference between selfadjoint elements and general elements play an important role in the next lemma:
%

\begin{lem}\label{l:ballclos}
Suppose that $\{(V_t^{\T{sa}},L_t^{\T{sa}})\}_{t \in T}$ is a uniform family  of order unit compact quantum metric spaces and let $r \geq 0$ be given. For each $t_0 \in T$ and $\de > 0$ there exist finitely many elements $z^1,z^2,\ldots,z^N \in A^{\T{sa}}$ and an open neighbourhood $U \su T$ of $t_0$ such that
\begin{enumerate}
\item $z_t^n \in \B B_1(L_t^{\T{sa}}) \cap \B B_r( \| \cd \|_{\star})$ for all $t \in U$ and $n \in \{1,2,\ldots,N\}$;
\item $\B B_1(L_t^{\T{sa}}) \cap \B B_r( \| \cd \|_{\star}) \su \bigcup_{n = 1}^N \B B_\de( z_t^n, \| \cd \|_\star)$ for all $t \in U$.
\end{enumerate}
\end{lem}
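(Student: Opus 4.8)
The plan is to turn the uniform covering problem for the fibres $t$ near $t_0$ into a single total-boundedness statement at the base point $t_0$, transporting a finite net through the evaluation isomorphisms $\T{ev}_t$ and absorbing the discrepancy between the $t$-dependent involutions $*_t$ into a symmetrisation of the net centres inside $A$. The two rigidity features that make this work are the fibre-independence \eqref{eq:invpnt} of $\| \cdot \|_\star$ and its $*_t$-invariance from condition $(4)$ of Definition \ref{d:unicont}.

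First I would fix $\ep \in (0,1)$ and use condition $(3)$ to obtain a neighbourhood $U_1$ of $t_0$ on which $(1 - \ep) L_{t_0}(y_{t_0}) \leq L_t(y_t)$ for all $y \in V$. Then for $t \in U_1$ and $\xi \in \B B_1(L_t^{\T{sa}}) \cap \B B_r(\|\cdot\|_\star)$, writing $w := (\T{ev}_t)^{-1}(\xi) \in V$, the base-point evaluation $w_{t_0}$ satisfies $L_{t_0}(w_{t_0}) \leq (1-\ep)^{-1}$ and $\|w_{t_0}\|_\star = \|\xi\|_\star \leq r$ by \eqref{eq:invpnt}. Crucially, $w_{t_0}$ need not be $*_{t_0}$-selfadjoint, which is the subtlety flagged before the lemma; I therefore work with the complex ball $\B B_{(1-\ep)^{-1}}(L_{t_0}) \cap \B B_r(\|\cdot\|_\star) \su V_{t_0}$. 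Decomposing into $*_{t_0}$-real and imaginary parts and using that both $L_{t_0}$ and $\|\cdot\|_\star$ are $*_{t_0}$-invariant, this complex ball embeds bi-Lipschitzly into a product of two selfadjoint balls, which are totally bounded in $\|\cdot\|_\star$ by the order unit compact quantum metric space property (cf. the remark following \eqref{eq:ballinter}) together with the norm equivalence \eqref{eq:normfield}. Hence the complex ball is totally bounded, and for a parameter $\de' > 0$ to be fixed later I pick a finite $\de'$-net $c^1, \ldots, c^N$ inside it.

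Next I would construct the centres. Choosing a scaling parameter $\eta \in (\ep,1)$, I set $y^n := (\T{ev}_{t_0})^{-1}((1 - \eta) c^n) \in V$ and $z^n := \tfrac{1}{2}(y^n + (y^n)^*) \in A^{\T{sa}}$, using the \emph{ambient} involution of $A$. Since each $\T{ev}_t$ is a $*$-homomorphism and each $V_t$ is $*_t$-invariant, $z^n_t = \tfrac{1}{2}(y^n_t + (y^n_t)^{*_t})$ lands in $V_t^{\T{sa}}$ for every $t$ and is the $*_t$-selfadjoint part of $y^n_t$; thus $L_t(z^n_t) \leq L_t(y^n_t)$, while condition $(4)$ and \eqref{eq:invpnt} give $\|z^n_t\|_\star \leq \|y^n\|_\star \leq (1-\eta) r \leq r$. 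As $L_{t_0}(y^n_{t_0}) = (1-\eta) L_{t_0}(c^n) \leq (1-\eta)(1-\ep)^{-1} < 1$, continuity of $t \mapsto L_t(y^n_t)$ from condition $(2)$ (valid since $y^n \in V$) produces a neighbourhood $U_n$ with $L_t(z^n_t) \leq L_t(y^n_t) < 1$; on $U' := \bigcap_n U_n$ this establishes part $(1)$ of the lemma.

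Finally, for the covering in part $(2)$, given $t \in U_1$ and $\xi$ as above I choose $c^n$ with $\|w_{t_0} - c^n\|_\star \leq \de'$. The key point is that $w = w^{*_t}$ in $V$ because $\xi$ is $*_t$-selfadjoint, so $z^n_t - \xi = \T{ev}_t\big(\tfrac{1}{2}[(y^n - w) + (y^n - w)^{*_t}]\big)$, whence \eqref{eq:invpnt} and condition $(4)$ yield
\[
\|z^n_t - \xi\|_\star \leq \|y^n - w\|_\star = \|(1-\eta)c^n - w_{t_0}\|_\star \leq \eta \|c^n\|_\star + \de' \leq \eta r + \de'.
\]
Fixing at the outset $\ep < \de/(2\max\{r,1\})$, then $\eta \in (\ep, \de/(2\max\{r,1\})]$ and $\de' := \de/2$ forces the right-hand side below $\de$, so the evaluated net $\de$-covers $\B B_1(L_t^{\T{sa}}) \cap \B B_r(\|\cdot\|_\star)$; taking $U := U_1 \cap U'$ finishes the proof. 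The main obstacle is exactly the bookkeeping imposed by the varying involutions: it is what forces covering the \emph{complex} ball at $t_0$ and symmetrising the centres in $A$, whereas the uniformity in $t$ of the final estimate rests entirely on the fibrewise rigidity of $\|\cdot\|_\star$.
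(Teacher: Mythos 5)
Your argument is correct and follows essentially the same route as the paper's proof: a finite $\|\cdot\|_\star$-net for the \emph{complex} ball at $t_0$ (the paper shrinks the transported element by $(1-\ep)$ where you enlarge the ball to radius $(1-\ep)^{-1}$, which is the same device), fibrewise symmetrisation of the net centres with a small rescaling to keep them in $\B B_1(L_t^{\T{sa}})$ via condition $(2)$, and the final estimate resting on the $*_t$-invariance and fibre-independence of $\|\cdot\|_\star$. The only cosmetic difference is that the paper scales the centres by $(1+\ep)^{-1}$ after the fact using $L_t(f^n_t)\leq 1+\ep$, while you scale by $(1-\eta)$ beforehand and invoke continuity to preserve the strict inequality.
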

\begin{proof}
Let $t_0 \in T$ and $\de > 0$ be given and assume without loss of generality that $\de < 2 r + 1$. Put $\ep := \frac{\de}{2 r + 1} \in (0,1)$. We notice that the norms $\| \cd \|_{t_0}$ and $\| \cd \|_\star \colon  V_{t_0} \to [0,\infty)$ are equivalent and that $*_{t_0} \colon  V_{t_0} \to V_{t_0}$ is an isometry for both $L_{t_0}$ and $\| \cd \|_\star$. Since $(V_{t_0}^{\T{sa}},L_{t_0}^{\T{sa}})$ is an order unit compact quantum metric space we may choose $f^1,\ldots,f^N \in V$ such that $f^1_{t_0},\ldots f^N_{t_0} \in \B B_1(L_{t_0}) \cap \B B_r( \| \cd \|_\star)$ and such that
\begin{equation}\label{eq:inc} 
\B B_1(L_{t_0}) \cap \B B_r(\| \cd \|_\star) \su \bigcup_{n = 1}^N \B B_\ep( f_{t_0}^n, \| \cd \|_\star) ,
\end{equation}
see \cite[Theorem 1.8]{Rie:MSA}. We stress that the intersection of balls above does not consist of selfadjoint elements. Using the conditions in Definition \ref{d:unicont}, we may choose an open neighbourhood $U \su T$ of $t_0$ such that
\begin{equation}\label{eq:lipcont}
L_t( f^n_t) \leq 1 + \ep \q \T{and} \q (1-\ep) L_{t_0}(y_{t_0}) \leq L_t(y_t)
\end{equation}
for all $t \in U$, $n \in \{1,2,\ldots,N\}$ and $y \in V$. For each $n \in \{1,2,\ldots,N\}$ and $t \in U$, define
\[
z_t^n := \frac{f^n_t + (f^n_t)^{*_t} }{2(1 + \ep)} \in V_t^{\T{sa}} .
\]
Using that $*_t \colon  V_t \to V_t$ is an isometry for both $L_t$ and $\| \cd \|_\star \colon  V_t \to [0,\infty)$ together with \eqref{eq:lipcont}, we may estimate as follows:
\[
\begin{split}
L_t(z_t^n) \leq \frac{L_t( f^n_t)}{1 + \ep} \leq 1 \q \T{and} \q
\| z_t^n \|_\star \leq \frac{\| f^n_t\|_\star}{1 + \ep} = \frac{\| f^n_{t_0} \|_\star }{1 + \ep} \leq r ,
\end{split}
\]
implying that $z_t^n \in \B B_1(L_t^{\T{sa}}) \cap \B B_r(\| \cd \|_\star)$ for all $t \in U$. Let now $t \in U$ and $y \in V$ with $y_t \in \B \B B_1(L_t^{\T{sa}}) \cap \B B_r(\| \cd \|_\star)$ be given. Using \eqref{eq:lipcont}, we see that
\[
L_{t_0}(y_{t_0}) \leq \frac{ L_t(y_t) }{1 - \ep} \leq \frac{1}{1 - \ep} \q \T{and} \q \| y_{t_0} \|_\star = \| y_t \|_\star \leq r ,
\]
implying that $(1 - \ep) \cd y_{t_0} \in \B B_1(L_{t_0}) \cap \B B_r(\| \cd \|_\star)$. We remark that $y_{t_0}$ need not be selfadjoint in the fibre $V_{t_0}$ (but that $y_t$ is selfadjoint in the fibre $V_t$). Using the inclusion in \eqref{eq:inc}, we may choose $j \in \{1,\ldots,N\}$ such that 
\[
\| (1 - \ep) y_{t_0} - f^j_{t_0} \|_\star \leq \ep .
\]
We then have the estimates
\[
\begin{split}
\| y_t - z^j_t \|_\star & = \big\| \frac{y_t + y_t^{*_t}}{2} - \frac{f^j_t + (f^j_t)^{*_t}}{2 (1 + \ep)} \big\|_\star
\leq \frac{\| y_t(1 + \ep) - f^j_t \|_\star}{2(1 + \ep)} + \frac{\| y_t^{*_t}(1 + \ep) - (f^j_t)^{*_t} \|_\star }{2(1 + \ep)} \\
& \leq 2 \ep \frac{\| y_t \|_\star}{1 + \ep} + \frac{\| y_t (1 - \ep) - f^j_t \|_\star}{1 + \ep}
\leq 2 \ep r + \frac{\| y_{t_0} (1 - \ep) - f^j_{t_0} \|_\star}{1 + \ep} \leq 2 \ep r + \ep = \de. 
\end{split}
\]
This proves the lemma.
\end{proof}

We may now state and prove the main theorem of this section:

\begin{thm}\label{t:GHconv}
Suppose that $\{(V_t^{\T{sa}},L_t^{\T{sa}})\}_{t \in T}$ is a uniform family of order unit compact quantum metric spaces and suppose that $\sup_{t \in T} r_t < \infty$, where $r_t \geq 0$ denotes the radius of the order unit compact quantum metric space $(V_t^\sa,L_t^{\T{sa}})$. Let $t_0 \in T$ be given and suppose that the unital $C^*$-algebra $A_{t_0}$ is separable. Then it holds that  
\[
\lim_{t \to t_0} \T{dist}_q( V_t^{\T{sa}}, V_{t_0}^{\T{sa}}) = 0 .
\]
\end{thm}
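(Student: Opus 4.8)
The plan is to reduce everything to Li's $R$-order unit distance and then, for each target accuracy, to build a single normed space in which isometric copies of the two fibres are glued along a common finite net of anchor points supplied by Lemma~\ref{l:ballclos}.

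First I would use the uniform radius bound. Put $R := \sup_{t \in T} r_t < \infty$; then $R \geq r_t, r_{t_0}$ for every $t$, so Proposition~\ref{p:lipghd} gives $\T{dist}_q(V_t^{\T{sa}}, V_{t_0}^{\T{sa}}) \leq 3 \cd \T{dist}_{\T{oq}}^R(V_t^{\T{sa}}, V_{t_0}^{\T{sa}})$, and it is enough to prove that $\T{dist}_{\T{oq}}^R(V_t^{\T{sa}}, V_{t_0}^{\T{sa}}) \to 0$ as $t \to t_0$. So I fix $\ep > 0$ and look for a neighbourhood $U$ of $t_0$ on which this quantity is below $\ep$. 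Applying Lemma~\ref{l:ballclos} (with $r = CR$ and a small $\de > 0$) produces finitely many $z^1,\dots,z^N \in A^{\T{sa}}$ and a neighbourhood $U$ of $t_0$ such that, for all $t \in U$ (in particular at $t_0$), the points $z_t^n$ lie in and $\de$-cover $\B B_1(L_t^{\T{sa}}) \cap \B B_{CR}(\| \cd \|_\star)$; since $\| \cd \|_t \leq C \| \cd \|_\star$ this intersection contains $\C D_R(V_t^{\T{sa}})$.

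Next I would manufacture the bridge. Take $W := V_t^{\T{sa}} \oplus V_{t_0}^{\T{sa}}$ equipped with the largest norm restricting to $\| \cd \|_t$ and $\| \cd \|_{t_0}$ on the two summands and satisfying $\| (z_t^n, -z_{t_0}^n) \|_W \leq \ga_t$ for all $n$, together with the analogous bound for the pair of units; let $\io_t, \io_{t_0}$ be the canonical inclusions, which are isometric by construction. The point that makes this work is that adjoining the gluing vectors does not shrink either summand ball precisely when the anchor data are consistent, i.e.\ when $\big| \| z_t^n - z_t^m \|_t - \| z_{t_0}^n - z_{t_0}^m \|_{t_0} \big| \leq 2 \ga_t$ and similarly for the unit; these are finitely many inequalities, each a consequence of the continuity of $t \mapsto \| \T{ev}_t(z^n - z^m) \|_t$ afforded by the continuous field, so a valid choice of $\ga_t \to 0$ exists after shrinking $U$. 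Given $\xi \in \C D_R(V_t^{\T{sa}})$, I pick an anchor with $\| \xi - z_t^n \|_\star \leq \de$ and transport $\xi$ to the fibre at $t_0$ by $\Theta_t(\xi) := \tfrac12\big( \T{ev}_{t_0}(\T{ev}_t^{-1}\xi) + \T{ev}_{t_0}(\T{ev}_t^{-1}\xi)^{*_{t_0}} \big) \in V_{t_0}^{\T{sa}}$; combining the covering estimate, the gluing bound $\ga_t$, and the (operator-norm) boundedness of $\Theta_t$ then bounds $\| \io_t(\xi) - \io_{t_0}(\Theta_t \xi) \|_W$ by $C' \de + \ga_t + o(1)$, while condition~(3) of Definition~\ref{d:unicont} together with the norm control discussed below places $\Theta_t \xi$ in $\C D_R(V_{t_0}^{\T{sa}})$ up to an $o(1)$ correction. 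Running the symmetric estimate and adding the unit term $R \| \io_t(1_{A_t}) - \io_{t_0}(1_{A_{t_0}}) \|_W \leq R \ga_t$ then forces $\T{dist}_{\T{oq}}^R(V_t^{\T{sa}}, V_{t_0}^{\T{sa}})$ eventually below $\ep$.

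The hard part is exactly the reconciliation of the two fibres hidden in the phrase ``up to $o(1)$''. Because $V$ carries no single involution, the involutions $*_t$ and the fibre norms $\| \cd \|_t$ genuinely vary with $t$, and the definition of $\T{dist}_{\T{oq}}^R$ forbids simply replacing them by the uniformly equivalent reference norm, which would only yield the lossy factor $C^2$ in the norm bound on $\Theta_t \xi$ rather than the sharp $R + o(1)$ required to land inside $\C D_R(V_{t_0}^{\T{sa}})$. The remedy is to upgrade the pointwise continuity encoded in Definition~\ref{d:unicont} and in the continuous field to \emph{uniform} control of the fibre norms and of the transported involutions over the totally bounded balls $\C D_R(V_t^{\T{sa}})$: one covers these balls by the finite net, on which continuity reduces to a finite list of statements, and then propagates the estimate using the equi-Lipschitz continuity (with constant $C$) of $v \mapsto \| \T{ev}_t(v) \|_t$. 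It is here that the separability of $A_{t_0}$ enters, guaranteeing that the relevant totally bounded sets and the state space $S(V_{t_0}^{\T{sa}})$ are separable and metrizable, so that the passage from pointwise to uniform convergence, as well as the comparison underlying Proposition~\ref{p:lipghd}, are legitimate.
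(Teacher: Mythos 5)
Your overall architecture matches the paper's: reduce to $\T{dist}_{\T{oq}}^R$ via Proposition \ref{p:lipghd}, and compare the balls $\C D_R(V_t^{\T{sa}})$ and $\C D_R(V_{t_0}^{\T{sa}})$ through the finite net of anchors supplied by Lemma \ref{l:ballclos}, with a small rescaling to absorb the norm overshoot of the anchors. Where you diverge is in the choice of ambient space $W$: the paper does not hand-build a two-fibre bridge, but invokes Li's \cite[Proposition 7.4]{Li:GH-dist}, which --- using that $A^{\T{sa}} \su \prod_{t\in T} A_t^{\T{sa}}$ is a continuous field of real Banach spaces (via Fell's maximality theorem) and that $A_{t_0}^{\T{sa}}$ is separable --- produces a single normed space $W$ with isometric $\rr$-linear maps $\io_t \colon A_t^{\T{sa}} \to W$ such that $t \mapsto \io_t(f_t)$ is continuous at $t_0$ for every $f \in A^{\T{sa}}$. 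This is exactly where the separability hypothesis enters; the role you assign to separability (metrizability of state spaces, legitimacy of Proposition \ref{p:lipghd}) is not where it is actually needed.

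The genuine gap is in your bridge. The largest norm on $V_t^{\T{sa}} \oplus V_{t_0}^{\T{sa}}$ dominated by $\| \cd \|_t$ and $\| \cd \|_{t_0}$ on the two summands and satisfying $\| (z_t^n, -z_{t_0}^n) \|_W \leq \ga_t$ is an infimal convolution over all decompositions $(x,y) = (x',0) + (0,y') + \sum_n \la_n (z_t^n, -z_{t_0}^n)$, and for $\io_t$ to be isometric one must rule out \emph{every} such decomposition shortening $(x,0)$. Already $x = z_t^1 + z_t^2$ shows that you need $\| z_t^1 + z_t^2 \|_t \leq \| z_{t_0}^1 + z_{t_0}^2 \|_{t_0} + 2\ga_t$, which is not implied by your pairwise condition on the differences $z^n - z^m$; what is actually required is $\big| \| \sum_n \la_n z_t^n \|_t - \| \sum_n \la_n z_{t_0}^n \|_{t_0} \big| \leq \ga_t \sum_n |\la_n|$ for all $(\la_n) \in \rr^N$. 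This is rescuable: since each $z^n \in A^{\T{sa}}$ and $t \mapsto \| f_t \|_t$ is continuous for every $f \in A$, the estimate $\big| \| f_t \|_t - \| g_t \|_t \big| \leq \| f - g \|_A$ gives equicontinuity in $f$, and compactness of the $\ell^1$-sphere in $\rr^N$ then yields the required uniform bound on a neighbourhood of $t_0$ --- but the compatibility condition as you stated it does not suffice for the isometry that Li's $\T{dist}_{\T{oq}}^R$ demands. I would also point out that your transport map $\Theta_t$ is an avoidable detour: as in the paper, one can take the comparison point for $\xi \in \C D_R(V_t^{\T{sa}})$ to be the rescaled anchor $\frac{R}{R + (C+1)\de} \cd z_{t_0}^j$ itself, whose membership in $\C D_R(V_{t_0}^{\T{sa}})$ follows from item (1) of Lemma \ref{l:ballclos} together with $\big| \| z_{t_0}^j \|_{t_0} - \| z_t^j \|_t \big| \leq \ga_t$ (or $\leq \de$, from the continuity of $t \mapsto \| z_t^j \|_t$). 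This sidesteps both the lossy $C^2$ bound on $\| \Theta_t \xi \|_{t_0}$ and the unquantified ``up to $o(1)$'' corrections for the transported involution on which your argument currently leans.
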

\begin{proof}
Since the injective $*$-homomorphism $j \colon  C(T) \to \C Z(A)$ is supposed to be unital it is also non-degenerate and we may thus identify $A$ with a $*$-subalgebra of $\prod_{t \in T} A_t$, see \cite[Proposition 2.8]{Bla:DCH}. We moreover see that $A$ is separating in the sense that if $t,s \in T$ with $t \neq s$ and $x \in A_t$, $y \in A_s$ are given, then there exists an $f \in A$ with $f_t = x$ and $f_s = y$. It therefore follows by \cite[Corollary to Theorem 1.4]{Fel:SAO} that $A$ is maximal. In other words, for each $\ga \in \prod_{t \in T} A_t$, we have the implication
\[
\Big( t \mapsto \| f_t - \ga_t \|_t \T{ is continuous on } T \T{ for all } f \in A \Big) \rar \big( \ga \in A \big).
\]
In particular, it holds that $A^{\T{sa}} \su \prod_{t \in T} A_t^{\T{sa}}$ defines a continuous field of real Banach spaces with fibres $\{ A_t^{\T{sa}} \}_{t \in T}$, see \cite[Section 10.1 and 10.2]{Dix:Calg} for details. Since the fibre $A^{\T{sa}}_{t_0}$ is separable we may apply \cite[Proposition 7.4]{Li:GH-dist} and choose a real Banach space $W$ together with isometric $\rr$-linear maps $\io_t\colon A_t^{\T{sa}} \to W$ such that it holds for every $f \in A^{\T{sa}}$ that the map $T \to W$ defined by $t \mapsto \io_t \circ f$ is continuous at $t_0 \in T$. Let us fix the reference norm $\| \cd \|_\star \colon  V \to [0,\infty)$ together with the norm-equivalence constant $C > 0$ as in Definition \ref{d:unicont}, and define $R := 1 + \sup_{t \in T} r_t$.  Let $\ep > 0$ be given and put $\de := \frac{\ep}{6 (C+1)} > 0$. By Lemma \ref{l:ballclos} we may choose $z^1,z^2,\ldots,z^N \in A^{\T{sa}}$ and an open neighbourhood $U \su T$ of $t_0 \in T$ such that
\begin{enumerate}
\item $z_t^n \in \B B_1(L_t^{\T{sa}}) \cap \B B_{C \cd R}( \| \cd \|_\star)$ for all $t \in U$ and $n \in \{1,2,\ldots,N\}$;
\item $\B B_1(L_t^{\T{sa}}) \cap \B B_{C \cd R}( \| \cd \|_\star) \su \bigcup_{n = 1}^N \B B_\de( z_t^n, \| \cd \|_\star)$ for all $t \in U$. 
\end{enumerate}
By shrinking $U$, if necessary, we may moreover assume that
\[
\| \io_t(1_{A_t})-  \io_{t_0}(1_{A_{t_0}})\| < \frac{\ep}{3 R} \q \T{and} \q \| \io_t(z_t^n)- \io_{t_0}(z_{t_0}^n) \|< \delta
\]
for all $t\in U$ and $n \in \{1,\dots, N\}$. For each $t \in U$ and $n \in \{1,2,\ldots,N\}$, we define
\[
\ze_t^n := \frac{R}{R + (C + 1) \de} \cd z_t^n \in V_t^\sa .
\]
Recall from \eqref{eq:ballinter} that $\C D_R(V_t^{\sa}) $ denotes the intersection $\B B_1(L_t^\sa) \cap \B B_R( \| \cd \|_t)$. We let \\ $\| \cd \| \colon  W \to [0,\infty)$ denote the norm on the real Banach space $W$ and claim that 
\[
\begin{split}
 \T{dist}_H^{\rho_{\| \cd \|}}\Big( \io_t\big( \C D_R(V_t^\sa)\big), \io_{t_0}\big( \C D_R(V_{t_0}^\sa) \big) \Big) < \frac{\ep}{3} \q \T{and} \q
 R \cd \| \io_t(1_{A_t}) - \io_{t_0}(1_{A_{t_0}}) \| < \frac{\ep}{3} 
\end{split}
\]
for all $t \in U$. Establishing this claim will prove the present theorem since Proposition \ref{p:lipghd} then entails that $\T{dist}_q(V_t^{\T{sa}}, V_{t_0}^{\T{sa}}) < \ep$ for all $t \in U$. 

Let thus $t \in U$ be given. By construction, we have that $R \cd \| \io_t(1_{A_t}) - \io_{t_0}(1_{A_{t_0}}) \| < \frac{\ep}{3}$ so we may focus on showing that
\[
\T{dist}_H^{\rho_{\| \cd \|}}\Big( \io_t\big( \C D_R(V_t^\sa) \big), \io_{t_0}\big( \C D_R(V_{t_0}^\sa) \big) \Big) < 2 (C + 1) \de  = \frac{\ep}{3}.
\]
Let $x_t \in \C D_R(V_t^\sa)$ be given. Since $\C D_R(V_t^\sa) \su \B B_1(L_t^{\T{sa}}) \cap \B B_{C \cd R}( \| \cd \|_\star)$ we may choose a $j \in \{1,2,\ldots,N\}$ such that $\| x_t - z_t^j \|_\star \leq \de$.

We notice that $\ze_{t_0}^j \in \C D_R(V_{t_0}^\sa)$. Indeed, clearly $\ze_{t_0}^j \in \B B_1(L_{t_0}^{\T{sa}})$ and the following estimate implies that $\| z_{t_0}^j\|_{t_0} \leq R + (C + 1) \de$ (recall that $\| x_t \|_t \leq R$):
\begin{equation}\label{eq:ballinc}
\begin{split}
\big| \| z_{t_0}^j \|_{t_0} - \| x_t \|_t \big| & \leq \big| \| z_{t_0}^j \|_{t_0} - \| z_t^j \|_t \big| + \| z_t^j - x_t \|_t \\
& \leq \| \io_{t_0}(z_{t_0}^j) - \io_t(z_t^j) \| + C \| z_t^j - x_t \|_\star
< \de + C \de .
\end{split}
\end{equation}
We moreover have that
\begin{equation}\label{eq:distance}
\begin{split}
\| \io_t(x_t) - \io_{t_0}(\ze_{t_0}^j) \| 
& \leq \| x_t - z_t^j \|_t + \| \io_t(z_t^j) - \io_{t_0}(z_{t_0}^j) \| + \|  z_{t_0}^j - \frac{R}{R + (C + 1)\de} z_{t_0}^j \|_{t_0} \\
& < C \de +\de  + \frac{(C + 1) \de}{ R + (C+1)\de} \| z_{t_0}^j \|_{t_0} \leq 2 (C + 1) \de .
\end{split}
\end{equation}
On the other hand, letting $y_{t_0} \in \C D_R(V_{t_0}^\sa)$ be given, we choose a $k \in \{1,2,\ldots,N\}$ such that $\| y_{t_0} - z_{t_0}^k \|_\star \leq \de$. Similar estimates to those given in \eqref{eq:ballinc} and \eqref{eq:distance} show that $\ze_t^k \in \C D_R(V_t^\sa)$ and that $\| \io_{t_0}(y_{t_0}) - \io_t(\ze_t^k) \| < 2 (C + 1)\de$. This proves our claim and hence the result of the theorem.
\end{proof}

\section{Quantum Gromov-Hausdorff convergence of crossed products}\label{sec:qgh-convergence-for-crossed-products}
In Section \ref{s:comcro} we gave criteria showing when an automorphism of a compact quantum metric space gives rise to a compact quantum metric structure on the associated crossed product algebra. Our next overall aim is to investigate what happens when we start varying the automorphism and what we can say about the corresponding variation of the corresponding crossed product compact quantum metric spaces with respect to the quantum Gromov-Hausdorff distance. To this end, we shall apply the general uniform criterion, which we developed in Section \ref{s:unicri}. \\ 

As in Section \ref{s:noniso} and Section \ref{s:comcro}, we consider a unital $C^*$-algebra $B$ together with a norm-dense $*$-invariant subspace $V_B \su B$ with $1 \in V_B$, and fix a semi-norm $L_B \colon  V_B \to [0,\infty)$ with $L_B(1) = 0$ and $L_B(\xi) = L_B(\xi^*)$ for all $\xi \in V_B$. We shall moreover fix an order preserving (see Definition \ref{d:vertiii}) norm $\vertiii{\cdot}\colon C_c(\ZZ) \to [0,\infty)$ which we assume normalized such that $\vertiii{e_0}=1$.

We are interested in the extra structure given by a compact Hausdorff space $T$ together with a $*$-automorphism $\be_t \colon  B \to B$ for each $t \in T$ subject to the condition that the map
\[
T \to B \q t \mapsto \be_t(b) 
\]
is continuous, with respect to the $C^*$-norm on $B$ for all $b \in B$. For each $t \in T$, we assume that $\be_t$ and $\be_t^{-1} \colon  B \to B$ preserve the subspace $V_B \su B$ so that
\[
\be_t(V_B) = V_B = \be_t^{-1}(V_B) .
\]
We may assemble the family $\{\beta_t\}_{t\in T}$ into a $*$-automorphism $\be \colon  C(T,B) \to C(T,B)$ defined by 
\[
\be(f)(t) := \be_t( f(t) )
\]
for all $f \in C(T,B)$ and $t \in T$, and this allows us to construct the reduced crossed product $\cro{C(T,B),\be}$. The commutative unital $C^*$-algebra $C(T)$ is naturally included into the center of this reduced crossed product 
\[
j \colon  C(T) \to \C Z\big( \cro{C(T,B),\be} \big)
\]
and this inclusion provides us with an upper semi-continuous field of $C^*$-algebras, see \cite{Rie:CFG}[Proposition 1.2]. We also refer to \cite{Bla:DCH} for more information on fields of $C^*$-algebras. In fact, since $\B Z$ is amenable we have an even stronger result, see \cite{Ped:CAA}[Theorem 7.7.7] and \cite{Rie:CFG}[Corollary 3.6]:

\begin{prop}\label{p:crossfield}
The inclusion $j \colon  C(T) \to \C Z( \cro{C(T,B),\be})$ gives $\cro{C(T,B),\be}$ the structure of a \emph{continuous} field of $C^*$-algebras and the fiber at each $t \in T$ identifies with the reduced crossed product $\cro{B,\be_t}$ via the quotient map
\[
\cro{C(T,B),\be} \longrightarrow \cro{B,\be_t}
\]
induced by the map $\T{ev}_t \colon  C(T,B) \to B$ evaluating at $t$.
\end{prop}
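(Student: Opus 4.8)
The plan is to leverage the amenability of $\zz$ in two separate places: once to identify the fibres and once to upgrade upper semi-continuity to continuity. We already know from \cite[Proposition 1.2]{Rie:CFG} that the central inclusion $j \colon C(T) \to \C Z(\cro{C(T,B),\be})$ realizes $\cro{C(T,B),\be}$ as an \emph{upper} semi-continuous field over $T$, whose fibre at $t$ is the quotient $\cro{C(T,B),\be}/I_t$, where $I_t$ denotes the closed ideal generated by $j(\{g \in C(T) : g(t) = 0\})$. So two things remain: to recognise this quotient as $\cro{B,\be_t}$ with quotient map induced by $\T{ev}_t$, and to establish the missing \emph{lower} semi-continuity of $t \mapsto \|x_t\|$.

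First I would identify the fibres. The evaluation map $\T{ev}_t \colon C(T,B) \to B$ is a surjective $*$-homomorphism which is equivariant in the sense that $\T{ev}_t \ci \be = \be_t \ci \T{ev}_t$, and its kernel is the $\be$-invariant ideal $J_t := \{f \in C(T,B) : f(t) = 0\}$. Because $\zz$ is amenable, the universal and reduced crossed products coincide \cite[Theorem 7.7.7]{Ped:CAA}; combining this with the (always valid) exactness of the universal crossed product functor on the invariant short exact sequence $0 \to J_t \to C(T,B) \xrightarrow{\T{ev}_t} B \to 0$ yields the short exact sequence
\[
0 \to J_t \rtimes_r \zz \to \cro{C(T,B),\be} \to \cro{B,\be_t} \to 0,
\]
in which the right-hand map is induced by $\T{ev}_t$. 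It then suffices to match the kernel $J_t \rtimes_r \zz$ with the ideal $I_t$ above, which is a direct computation: $\{g \in C(T) : g(t) = 0\} \cd 1_B$ generates $J_t$ as a closed ideal of $C(T,B)$, and $J_t$ in turn generates $J_t \rtimes_r \zz$ as a closed ideal of $\cro{C(T,B),\be}$, so $I_t = J_t \rtimes_r \zz$ and the fibre at $t$ is $\cro{B,\be_t}$ as claimed.

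It remains to promote the upper semi-continuous field to a genuinely continuous one. Here I would invoke \cite[Corollary 3.6]{Rie:CFG}: the algebra $C(T,B)$ is the section algebra of the (trivial, hence continuous) field over $T$ with constant fibre $B$, carrying the fieldwise action whose fibre at $t$ is $\be_t$ and which fixes $j(C(T))$ pointwise; since $\zz$ is amenable, the reduced crossed product of this continuous field is again a continuous field, with the fibres just identified. The hard part is precisely this last step. Upper semi-continuity of $t \mapsto \|x_t\|$ is automatic from the central $C(T)$-structure, but lower semi-continuity of the fibrewise reduced norms can genuinely fail for non-amenable (non-exact) groups, so amenability of $\zz$ is essential and enters exactly through the equality of reduced and universal crossed products underlying \cite[Corollary 3.6]{Rie:CFG}; it cannot be deduced formally from the upper semi-continuous field structure alone.
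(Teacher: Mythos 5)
Your proposal is correct and follows essentially the same route as the paper, which likewise obtains the result by combining the upper semi-continuous field structure coming from the central inclusion \cite[Proposition 1.2]{Rie:CFG} with amenability of $\zz$ via \cite[Theorem 7.7.7]{Ped:CAA} and \cite[Corollary 3.6]{Rie:CFG}. The paper leaves both the fibre identification and the upgrade to lower semi-continuity as citations, so your exactness argument identifying $I_t$ with $J_t \rtimes_r \zz$ merely supplies details the authors omit.
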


To ease the notation, we put
\[
A := \cro{C(T,B),\be} \q \T{and} \q A_t := \cro{B,\be_t}, 
\]
and denote the $C^*$-norm on $A_t$ by $\| \cd \|_t \colon  A_t \to [0,\infty)$ for all $t \in T$. For each $t \in T$, we have the semi-norms $L^1_t$ and $L^2_t \colon  C_c(\B Z,V_B) \to [0,\infty)$, defined in \eqref{eq:semi}, given by:
\[
\begin{split}
 L^1_t(\eta) &:=  \big\| \sum_{n=-\infty}^{\infty} n \cd \eta(n) U^n \big\|_t \q \T{and} \\
 L^2_t(\eta) &:= 
\max\big\{  \bvert{ L_B \ci \eta} , \bvert{ L_B \ci \eta^* } \big\} .
\end{split}
\]
Remark that, as a complex vector space, $V_t := V_{A_t}:= C_c(\B Z,V_B)$ is independent of $t \in T$. For each $t \in T$ we define $L_t := \max\{ L_t^1,L_t^2\}$ and recall from Theorem \ref{t:quacross} that $( \cro{B,\be_t},L_t)$ is a $C^*$-algebraic compact quantum metric space.

We are in this section establishing the following general convergence result:

\begin{thm}\label{t:crosconv}
With the setup as above, suppose that the following conditions hold:
\begin{enumerate}
\item the semi-norm $L_B \colon  V_B \to [0,\infty)$ is lower semi-continuous and $(B,L_B)$ is a compact quantum metric space;
\item for each $n \in \zz$ and each $\eta \in V_B$, the map $T \to [0,\infty)$ defined by $t \mapsto L_B( \be_t^n(\eta))$ is upper semi-continuous;
\item for each $n \in \zz$, $\ep > 0$ and $t_0 \in T$ there exists an open neighbourhood $U \su T$ of $t_0 \in U$ such that
\[
(1- \ep) L_B( \be_{t_0}^n(\eta))  \leq L_B(\be_t^n(\eta))
\]
for all $t \in U$ and $\eta \in V_B$. 
\end{enumerate}
Then it holds for every $t_0 \in T$ that
\[
\lim_{t \to t_0} \T{dist}_q\big( (\cro{B,\be_t}, L_t), (\cro{B,\be_{t_0}}, L_{t_0}) \big) = 0 .
\]
\end{thm}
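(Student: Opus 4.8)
The plan is to reduce the statement to the abstract convergence result of Theorem \ref{t:GHconv} by producing a uniform family of order unit compact quantum metric spaces, but only after truncating to bounded Fourier degree. Write $A := \cro{C(T,B),\be}$ with fibres $A_t = \cro{B,\be_t}$ as in Proposition \ref{p:crossfield}, and let $V \su A$ be the subspace spanned by the constant-coefficient elements $b U^n$ with $b \in V_B$, so that $\T{ev}_t(V) = C_c(\zz,V_B) = V_t$ in every fibre. The structural subtlety emphasised in Section \ref{s:unicri} is visible here: $V$ is \emph{not} invariant under the involution of $A$, because $(bU^n)^{*_t} = \be_t^{-n}(b^*) U^{-n}$ genuinely depends on $t$, and it is precisely this $t$-dependent involution that enters $L_t^2$ through the term $\vertiii{L_B \ci \eta^*}$. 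As preliminary input, Theorem \ref{t:quacross} gives that each $(\cro{B,\be_t},L_t)$ is a compact quantum metric space with radius $r_t \leq r_B + \tfrac{\pi}{2}$, and $(B,L_B)$ being a compact quantum metric space forces $B$, and hence each fibre $A_t$, to be separable. Fixing $R := r_B + \tfrac{\pi}{2} + 1$, Proposition \ref{p:lipghd} lets us pass between $\T{dist}_q$ and $\T{dist}_{\T{oq}}^R$, so the truncation comparison below may be carried out at the level of the intersected balls $\C D_R$.

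First I would establish a truncation estimate that is uniform in $t$. For $K \in \nn$ let $\si_K$ denote the Fejér mean of the dual action, $\si_K = \al_{F_K}$ with $F_K$ the Fejér kernel, which maps $A$ into the subspace $V^{(K)} \su V$ of Fourier degree at most $K$ and satisfies $L_t(\si_K(x)) \leq L_t(x)$ by $\al$-invariance and convexity of $L_t$. The decisive observation is that the dual action $\al$ and the length function $l$ are the \emph{same} in every fibre---they do not see $\be_t$---so, using $L_t^1 = L_l$ from Lemma \ref{l:circle}, one obtains $\| \si_K(x) - x \|_t \leq \eta_K \cd L_t^1(x)$ where $\eta_K := \tfrac{1}{2\pi}\int_{-\pi}^{\pi} F_K(\te)|\te|\,d\te \to 0$ \emph{independently of $t$}. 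Comparing the balls $\C D_R$ inside the common space $A_t^{\sa}$ via $\si_K$ then yields a uniform bound $\T{dist}_q\big( (\cro{B,\be_t},L_t), (V_t^{(K),\sa},L_t^{\sa}) \big) \leq \ep_K$ with $\ep_K \to 0$.

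Next, for a fixed level $K$, I would verify that $\{ (V_t^{(K),\sa}, L_t^{\sa}) \}_{t \in T}$ is a uniform family in the sense of Definition \ref{d:unicont}, so that Theorem \ref{t:GHconv} yields $\lim_{t \to t_0} \T{dist}_q(V_t^{(K),\sa}, V_{t_0}^{(K),\sa}) = 0$. Condition (1) and the uniform radius bound are inherited from the full crossed product (total boundedness of the balls passes to the subspace $V^{(K)}$); the reference norm of condition (4) may be taken to be $\| \xi \|_\star := \max_n \| \xi(n) \|_B$, which is $*_t$-invariant for all $t$ and satisfies $\| \xi \|_\star \leq \| \xi_t \|_t \leq (2K+1)\| \xi \|_\star$ by means of the contractions $\tau_{-n}$ and the triangle inequality. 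For condition (2) only finitely many Fourier coefficients occur for a fixed $y$: the term $L_t^1(y_t)$ is the fibre-norm of a fixed element of $A$ and hence continuous, while each $t \mapsto L_B(\be_t^n(y(-n)^*))$ is upper semi-continuous by hypothesis (2) and lower semi-continuous by hypothesis (3), so order-preservation of $\vertiii{\cd}$ renders $t \mapsto L_t(y_t)$ continuous.

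The main obstacle is condition (3), the lower bound $(1-\ep) L_{t_0}(y_{t_0}) \leq L_t(y_t)$ holding \emph{simultaneously for all} $y$. For the $L_t^2$-part this is routine: intersect the finitely many neighbourhoods from hypothesis (3) (one for each $|n| \leq K$) and apply order-preservation and homogeneity of $\vertiii{\cd}$. The $L_t^1$-part is delicate, because the reduced crossed product norm depends on $\be_t$ through all of its powers and is continuous in $t$ only for each individual element, not uniformly over bounded sets. Here I would use that $(B,L_B)$ is a compact quantum metric space: after normalising $L_{t_0}(y_{t_0}) = 1$, the constraints $L_t^2 \leq 1$ and $L_t^1 \leq 1$ force $L_B(y(n)) \leq 1/\vertiii{e_n}$ and $\| n\, y(n) \|_B \leq 1$, so the coefficients $n\,y(n)$ with $0 < |n| \leq K$ all lie in the fixed \emph{totally bounded} sets $\B B_{|n|/\vertiii{e_n}}(L_B) \cap \B B_1(\| \cd \|_B) \su B$. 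On this compact set of coefficient tuples the functions $t \mapsto \| \sum_n n\, y(n) U^n \|_t$ are equi-Lipschitz in the coefficient tuple (uniformly in $t$) and continuous in $t$ for each fixed tuple, hence equicontinuous at $t_0$; this produces a single neighbourhood on which $L_t^1(y_t) \geq 1 - \ep$ uniformly in $y$. A short case distinction according to whether the maximum defining $L_{t_0}(y_{t_0})$ is attained by $L^1$ or by $L^2$ then gives condition (3). Finally, assembling the pieces by the triangle inequality for $\T{dist}_q$---choosing $K$ with $\ep_K < \ep/3$ and then a neighbourhood of $t_0$ on which the truncated distance is below $\ep/3$---proves that $\T{dist}_q\big( (\cro{B,\be_t},L_t),(\cro{B,\be_{t_0}},L_{t_0}) \big) \to 0$.
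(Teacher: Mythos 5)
Your proposal is correct and follows essentially the same route as the paper: a truncation to a band of bounded Fourier degree that is uniform in $t$ (the paper invokes Li's Lemma 4.4, which is precisely your Fej\'er-kernel estimate), verification of the conditions of Definition \ref{d:unicont} for the truncated band with the reference norm $\|\xi\|_\star = \max_n \|\xi(n)\|_B$, a finite-net plus equi-Lipschitz argument (using total boundedness coming from the compact quantum metric space structure and the uniform bound $L_t^1(\xi)\leq 2N^2\|\xi\|_\star$) for the delicate $L^1$-part of condition (3), and finally Theorem \ref{t:GHconv} together with the triangle inequality. The only cosmetic deviations are that the paper places its finite net in the quotient $q_{t_0}\big(\B B_1(L_{t_0})\cap V_{t_0}^N\big)$ rather than in the coefficient space of $B$, and obtains lower semi-continuity of $t\mapsto L_B(\be_t^n(\eta))$ from hypothesis (1) and Remark \ref{rem:cont-rem} rather than from hypothesis (3).
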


\begin{remark}\label{rem:cont-rem}
Our standing assumption that $t\mapsto \beta_t(b)$ { is continuous} for all $b \in B$ implies that also $t\mapsto \beta_t^n(b)$ is continuous for all $n\in \zz$ and all $b\in B$. So if $L_B\colon V_B\to [0,\infty)$ is assumed lower semi-continuous (as in Theorem \ref{t:crosconv} $(1)$) then the map $t\mapsto L_B(\beta_t^n(\eta))$ is lower semi-continuous for each $\eta\in V_B$ and $n\in \zz$. Thus, assuming that condition $(1)$ in Theorem \ref{t:crosconv} holds, condition $(2)$ is equivalent with continuity of the map $t\mapsto L_B(\beta_t^n(\eta))$ for all $\eta \in V_B$ and all $n \in \zz$.
\end{remark}

The main part of the proof of Theorem \ref{t:crosconv} will be given in Subsection \ref{ss:procrocon} and relies heavily on the uniform criterion introduced in Section \ref{s:unicri} together with a technique allowing us to reduce the convergence problem to the case where there is an upper bound on the number of Fourier coefficients (in fact to the case where there is an upper bound on the distance to the diagonal). This latter technique applies to more general circle actions than the dual action on a crossed product by the integers and was found by Li, see \cite[Lemma 4.4]{Li:DCQ}, building on methods introduced by Rieffel, see \cite[Lemma 8.3 and 8.4]{Rie:GHD}.Ê\\

For the moment we illustrate the power of Theorem \ref{t:crosconv} by presenting two corollaries concerning quantum Gromov-Hausdorff convergence for isometric actions:

\begin{cor}\label{c:genisomet}
Let $(B,L_B)$ be a compact quantum metric space with $L_B \colon  V_B \to [0,\infty)$ lower semi-continuous and let $\{\beta_t\}_{t\in T}$ be a family of $*$-automorphisms of $B$ parametrized by a compact Hausdorff space $T$. Suppose that $\beta_t$ and $\beta_t^{-1}$ both preserve $V_B$ and that the map $t \mapsto \be_t(b)$ is continuous for every $b \in B$. Suppose moreover that each $*$-automorphism is $L_B$-isometric so that $L_B(\be_t(\xi)) = L_B(\xi)$ for all $\xi \in V_B$ and $t \in T$. Then it holds for every $t_0 \in T$ that
\[
\lim_{t \to t_0} \T{dist}_q\big( (\cro{B,\be_t}, L_t), (\cro{B,\be_{t_0}}, L_{t_0}) \big) = 0 .
\]
\end{cor}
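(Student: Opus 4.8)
The plan is to deduce the statement directly from Theorem \ref{t:crosconv} by verifying its three numbered hypotheses; the remaining structural assumptions of that theorem---that $T$ is compact Hausdorff, that $\beta_t$ and $\beta_t^{-1}$ preserve $V_B$, and that $t \mapsto \beta_t(b)$ is norm-continuous for every $b \in B$---are exactly the standing assumptions of the corollary, so Theorem \ref{t:crosconv} applies once conditions $(1)$, $(2)$ and $(3)$ are in place. Condition $(1)$ is assumed outright, since $L_B$ is lower semi-continuous and $(B,L_B)$ is a compact quantum metric space by hypothesis.

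The crux of the verification is the observation that the $L_B$-isometry of each $\beta_t$ upgrades to an isometry of every integer power. First I would check that $\beta_t^{-1}$ is again $L_B$-isometric: given $\eta \in V_B$, the element $\xi := \beta_t^{-1}(\eta)$ lies in $V_B$ because $\beta_t^{-1}(V_B) = V_B$, whence $L_B(\eta) = L_B(\beta_t(\xi)) = L_B(\xi) = L_B(\beta_t^{-1}(\eta))$. A straightforward induction on $|n|$ then yields
\[
L_B(\beta_t^n(\eta)) = L_B(\eta) \qquad \text{for all } n \in \zz,\ \eta \in V_B,\ t \in T.
\]
In particular, for fixed $n$ and $\eta$ the map $t \mapsto L_B(\beta_t^n(\eta))$ is \emph{constant} on $T$.

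With this in hand conditions $(2)$ and $(3)$ become immediate. A constant map is continuous and hence upper semi-continuous, giving $(2)$. For $(3)$, fix $n \in \zz$, $\ep > 0$ and $t_0 \in T$ and take $U = T$; then $(1 - \ep) L_B(\beta_{t_0}^n(\eta)) = (1-\ep) L_B(\eta) \leq L_B(\eta) = L_B(\beta_t^n(\eta))$ for every $t \in T$ and every $\eta \in V_B$, using only that $\ep > 0$ and $L_B(\eta) \geq 0$. All three conditions being verified, Theorem \ref{t:crosconv} yields $\lim_{t \to t_0} \T{dist}_q\big( (\cro{B,\be_t}, L_t), (\cro{B,\be_{t_0}}, L_{t_0}) \big) = 0$ for every $t_0 \in T$, as desired.

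Since the corollary is an essentially formal consequence of Theorem \ref{t:crosconv}, I do not expect any genuine analytic obstacle at this stage---the substantive work resides entirely in Theorem \ref{t:crosconv} and the uniform criterion of Section \ref{s:unicri}. The only point demanding a moment's care is the passage from isometry of $\beta_t$ to isometry of $\beta_t^{-1}$, and thereby of all integer powers, which is precisely where the hypothesis $\beta_t^{-1}(V_B) = V_B$ is used. Conceptually, the isometry assumption is exactly what renders the $t$-dependence of the defining data of $L_t$ invisible at the level of $L_B$, so that the uniform-family conditions hold globally (with $U = T$) rather than merely locally.
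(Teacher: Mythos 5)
Your proposal is correct and follows exactly the route the paper intends: the corollary is stated as an immediate consequence of Theorem \ref{t:crosconv}, and the only content is the observation that $L_B$-isometry of $\beta_t$ (together with $\beta_t^{\pm 1}(V_B)=V_B$) forces $L_B(\beta_t^n(\eta))=L_B(\eta)$ for all $n\in\zz$, so that conditions $(2)$ and $(3)$ hold trivially with $U=T$. Your explicit check that $\beta_t^{-1}$ is again $L_B$-isometric is the one small point the paper leaves implicit, and you handle it correctly.
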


To state the next corollary we need a few preliminaries. For a compact metric space $(X,d)$ we endow $C(X,X)$ with the metric
\[
d^\infty(\varphi,\psi):=\sup_{x\in X}d(\varphi(x),\psi(x)).
\]
We moreover define the dense complex subspace $V_{C(X)} \su C(X)$ by $V_{C(X)} :=\{f\in C(X)\mid f \text{ is Lipschitz continuous}\}$ 
together with the semi-norm 
\[
L_{C(X)}(f):=\sup\left \{ \frac{|f(x)-f(y)|}{d(x,y)} \mathrel{\Big|} x,y\in X, x\neq y\right\};
\]
i.e., $L_{C(X)}(f)$ is the minimal Lipschitz constant for $f$. We remark that $(C(X),L_{C(X)})$ is a compact quantum metric space and refer to \cite{Rie:MSS} for a discussion of this result. Being a supremum of continuous functions, the semi-norm $L_{C(X)} \colon  V_{C(X)} \to [0,\infty)$ is lower semi-continuous.


\begin{cor}\label{c:metisomet}
Let $(X,d)$ be a compact metric space and let $\{\psi_t\}_{t\in T}$ be a family of isometric homeomorphisms $\psi_t\colon X\to X$ parametrized by a compact Hausdorff space $T$ and satisfying that $T\ni t\mapsto \psi_t\in C(X,X)$ is continuous with respect to $d^\infty$. Then it holds for every $t_0 \in T$ that
\[
\lim_{t \to t_0} \T{dist}_q\big( (\cro{C(X),\be_t}, L_t), (\cro{C(X),\be_{t_0}}, L_{t_0}) \big) = 0 ,
\]
where $\beta_t\colon C(X)\to C(X)$ denotes the $*$-automorphism given by $\beta_t(f)=f\circ \psi_t$.
\end{cor}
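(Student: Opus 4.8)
The plan is to deduce this directly from Corollary \ref{c:genisomet} by specializing the base algebra to $B = C(X)$, equipped with the dense $*$-invariant subspace $V_B = V_{C(X)}$ of Lipschitz functions and the semi-norm $L_B = L_{C(X)}$. As recalled in the paragraph preceding the statement, $(C(X), L_{C(X)})$ is a compact quantum metric space and $L_{C(X)}$ is lower semi-continuous, so the standing hypothesis of Corollary \ref{c:genisomet} on the pair $(B,L_B)$ is already in place. It then remains to verify the three conditions imposed there on the family $\{\be_t\}_{t \in T}$: that each $\be_t$ and $\be_t^{-1}$ preserves $V_{C(X)}$, that $t \mapsto \be_t(f)$ is norm-continuous for every $f \in C(X)$, and that each $\be_t$ is $L_{C(X)}$-isometric.

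First I would dispatch the preservation and isometry conditions simultaneously, since both rest on the single observation that pre-composition with an isometry of $X$ leaves the minimal Lipschitz constant unchanged. Concretely, for $f \in V_{C(X)}$ and $x,y \in X$ with $x \neq y$ one has
\[
\frac{|\be_t(f)(x) - \be_t(f)(y)|}{d(x,y)} = \frac{|f(\psi_t(x)) - f(\psi_t(y))|}{d(\psi_t(x),\psi_t(y))},
\]
where the isometry of $\psi_t$ has been used to rewrite the denominator; taking the supremum over $x \neq y$ yields $L_{C(X)}(\be_t(f)) = L_{C(X)}(f)$. In particular $\be_t(f)$ is again Lipschitz, so $\be_t(V_{C(X)}) \su V_{C(X)}$, while the equality of Lipschitz constants is precisely the $L_{C(X)}$-isometry condition. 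Since $\psi_t$ is a surjective isometry, $\psi_t^{-1}$ is again an isometric homeomorphism of $X$, and the identical argument applied to $\be_t^{-1}(g) = g \circ \psi_t^{-1}$ shows that $\be_t^{-1}$ preserves $V_{C(X)}$ as well.

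The remaining point, and the only one requiring a genuine (if short) argument, is the norm-continuity of $t \mapsto \be_t(f)$ for each fixed $f \in C(X)$. Here I would pass from the $d^\infty$-continuity of the family $\{\psi_t\}$ to norm-continuity of $t \mapsto f \circ \psi_t$ by invoking the uniform continuity of $f$, available since $X$ is compact. Fixing $t_0 \in T$ and $\ep > 0$, choose $\de > 0$ so that $d(u,v) < \de$ forces $|f(u) - f(v)| < \ep$; by continuity of $t \mapsto \psi_t$ there is a neighbourhood $U$ of $t_0$ on which $d^\infty(\psi_t,\psi_{t_0}) < \de$, and for such $t$ one obtains $\|\be_t(f) - \be_{t_0}(f)\|_\infty = \sup_{x \in X} |f(\psi_t(x)) - f(\psi_{t_0}(x))| \leq \ep$. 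This establishes the last hypothesis, and Corollary \ref{c:genisomet} then delivers the claimed convergence.

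As the argument indicates, there is no substantial obstacle: the entire analytic content is carried by Corollary \ref{c:genisomet}, and the only mildly non-trivial step is the translation of $d^\infty$-continuity of the homeomorphisms into $C^*$-norm continuity of the induced automorphisms, which relies on compactness of $X$ through uniform continuity rather than on any delicate feature of the quantum metric structure.
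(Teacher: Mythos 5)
Your proposal is correct and follows essentially the same route as the paper's proof: reduce to Corollary \ref{c:genisomet}, observe that pre-composition with the (surjective) isometries $\psi_t^{\pm 1}$ preserves the Lipschitz functions and the minimal Lipschitz constant, and obtain norm-continuity of $t \mapsto \be_t(f)$ from uniform continuity of $f$ on the compact space $X$. No gaps.
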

\begin{proof}
We apply Corollary \ref{c:genisomet}. Since $\psi_t$ and $\psi_t^{-1}$ are isometric, both $\beta_t$ and $\beta_t^{-1}$ map $V_{C(X)}$ onto itself and it holds that $L_{C(X)}(\be_t(\xi)) = L_{C(X)}(\xi)$ for all $\xi\in V_B$ and $t\in T$. For a given $f \in C(X)$, it only remains to prove the continuity of the map $t\mapsto \beta_t(f)$. To see this, fix $t_0\in T$ and $\ep>0$. Since $X$ is compact, $f$ is uniformly continuous so there exists a $\delta > 0$ such that $|f(x)-f(y)|<\ep$ whenever $d(x,y)<\delta$. By assumption, there exists an open neighbourhood $U$ of $t_0$ such that ${d}^\infty(\psi_t, \psi_{t_0})<\delta$ for all $t\in U$. But then $\|\beta_t(f)-\beta_{t_0}(f)  \|_\infty =\sup_{x\in X} |f(\psi_t(x))-f(\psi_{t_0}(x))|<\ep$ for all $t\in U$. 
\end{proof}

We also present a different and perhaps more natural description of condition $(3)$ in Theorem \ref{t:crosconv}: One may ask whether a given linear map $\phi \colon  V_B \to V_B$ is bounded with respect to the semi-norm $L_B \colon  V_B \to [0,\infty)$ and in this case we have the operator semi-norm $\| \phi \|_{L_B} \in [0,\infty)$ defined by
\begin{equation}\label{eq:opesemi}
\| \phi \|_{L_B} := \sup\big\{  L_B( \phi(\eta)) \mid \eta \in V_B \, \, , \, \, \, L_B(\eta) \leq 1 \big\}.
\end{equation}
For each $n \in \zz$ and each $t,s \in T$, we define the linear isomorphism 
\[
\be_{t,s}^n := \be_t^n \ci \be_s^{-n} \colon  V_B \to V_B,
\]
and we will now show how condition (3) in Theorem \ref{t:crosconv} can be expressed  naturally in terms of the operator semi-norms of these maps.
\begin{lemma}\label{lem:operator-semi-norm-cont}
Let $n \in \zz$ and $t_0 \in T$ be given. With the assumptions as above, the following conditions are equivalent:
\begin{enumerate} 
\item there exists an open neighbourhood $U \su T$ of $t_0$ such that the linear isomorphism $\be_{t_0,t}^n \colon  V_B \to V_B$ is bounded for all $t \in U$ and the map $U \to [0,\infty)$ given by $t \mapsto \|\be_{t_0,t}^n\|_{L_B}$ is upper semi-continuous at $t_0 \in U$.
\item for each $\ep > 0$ there exists an open neighbourhood $U \su T$ of $t_0$ such that
\[
(1- \ep) L_B( \be_{t_0}^n(\eta))  \leq L_B(\be_t^n(\eta))
\]
for all $t \in U$ and $\eta \in V_B$.
\end{enumerate}
\end{lemma}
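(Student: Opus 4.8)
The plan is to reformulate both conditions as statements purely about the operator semi-norms $\|\be_{t_0,t}^n\|_{L_B}$ and then to observe that they are saying the same thing. The key starting move is the reparametrization afforded by the fact that $\be_t^n \colon V_B \to V_B$ is a linear bijection: substituting $\ze := \be_t^n(\eta)$, so that $\eta = \be_t^{-n}(\ze)$ and $\be_{t_0}^n(\eta) = \be_{t_0}^n \be_t^{-n}(\ze) = \be_{t_0,t}^n(\ze)$, the inequality in condition $(2)$ turns into
\[
(1-\ep)\,L_B\big(\be_{t_0,t}^n(\ze)\big) \leq L_B(\ze) \q \T{for all } \ze \in V_B ,
\]
and as $\eta$ ranges over $V_B$ so does $\ze$. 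Using homogeneity of $L_B$ — and noting that this inequality forces $L_B(\be_{t_0,t}^n(\ze)) = 0$ whenever $L_B(\ze) = 0$ — the displayed estimate is exactly the assertion that $\be_{t_0,t}^n$ is bounded with $\|\be_{t_0,t}^n\|_{L_B} \leq (1-\ep)^{-1}$. Thus I would record that condition $(2)$ is equivalent to: for every $\ep \in (0,1)$ there is an open neighbourhood $U$ of $t_0$ on which $\|\be_{t_0,t}^n\|_{L_B} \leq (1-\ep)^{-1}$.

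Next I would pin down the value at the base point. Since $\be_{t_0,t_0}^n = \be_{t_0}^n \ci \be_{t_0}^{-n} = \T{id}_{V_B}$, and since $(B,L_B)$ being a compact quantum metric space gives $\T{Ker}(L_B) = \cc 1$ so that $L_B$ is not identically zero, one has $\|\be_{t_0,t_0}^n\|_{L_B} = 1$. Consequently, upper semi-continuity at $t_0$ of the map $t \mapsto \|\be_{t_0,t}^n\|_{L_B}$ unwinds to the statement: for every $\de > 0$ there is an open neighbourhood $U$ of $t_0$ on which $\|\be_{t_0,t}^n\|_{L_B} < 1 + \de$. Boundedness of $\be_{t_0,t}^n$ near $t_0$, which is the remaining clause of $(1)$, is then automatic from any single such bound.

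The equivalence of $(1)$ and $(2)$ is now just the observation that the two families of upper bounds, $\{(1-\ep)^{-1} \mid \ep \in (0,1)\}$ and $\{1 + \de \mid \de > 0\}$, are cofinal as the bound decreases to $1$: given $\de$ choose $\ep$ with $(1-\ep)^{-1} \leq 1 + \de$, and given $\ep$ choose $\de \leq \ep/(1-\ep)$ so that $1 + \de \leq (1-\ep)^{-1}$. For $(2) \rar (1)$ this delivers both boundedness on a neighbourhood and the estimate $\|\be_{t_0,t}^n\|_{L_B} < 1 + \de = \|\be_{t_0,t_0}^n\|_{L_B} + \de$, i.e.\ upper semi-continuity at $t_0$; for $(1) \rar (2)$ one runs the same translation backwards and undoes the substitution $\ze = \be_t^n(\eta)$. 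There is no genuine analytic difficulty here, and the content lies entirely in setting up the correct dictionary. The one point I would flag as the place where an error is most likely is the bookkeeping of the inequality directions under the substitution, together with the verification that boundedness of $\be_{t_0,t}^n$ is genuinely information \emph{carried by} condition $(2)$ rather than an extra hypothesis; everything else reduces to the elementary fact that upper semi-continuity of a function at a point with value $1$ coincides with controllability of the function from above by any prescribed constant exceeding $1$ on a neighbourhood.
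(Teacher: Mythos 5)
Your proposal is correct and follows essentially the same route as the paper's proof: the same reparametrization $L_B(\be_{t_0}^n(\eta)) = L_B(\be_{t_0,t}^n(\be_t^n(\eta)))$, the same use of $\|\be_{t_0,t_0}^n\|_{L_B}=1$, and the same matching of the constants $(1-\ep)^{-1}$ and $1+\de$ (the paper chooses $\de=\ep/(1-\ep)$ and $\de=\ep/(1+\ep)$ in the two directions). The only cosmetic difference is that you phrase both conditions as a single neighbourhood bound on the operator semi-norm rather than running the two implications separately, and the paper dispenses with the degenerate case $L_B\equiv 0$ by declaring it trivial rather than excluding it via $\T{Ker}(L_B)=\cc 1$.
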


\begin{remark}
For fixed $n\in \zz$ and $\eta \in V_B$ the map $t\mapsto \beta_{t_0, t}^n(\eta)$ is continuous (compare e.g.~with Remark \ref{rem:cont-rem}) so if the semi-norm $L_B$ is  lower semi-continuous then so is the map $ t\mapsto L_B(\beta_{t_0, t}^n(\eta))$. Since a supremum of lower semi-continuous functions is again lower semi-continuous, it then follows that $t\mapsto \|\beta_{t_0,t}^n\|_{L_B}$ is lower semi-continuous. Thus, under the additional assumption that $L_B$ be lower semi-continuous, which is the case in many examples, upper semi-continuity of the map $t \mapsto \|\be_{t_0,t}^n\|_{L_B}$ actually coincides with continuity. 
\end{remark}


\begin{proof}
If $L_B$ is constantly zero the statement is trivial, so assume that this is not the case and $(1)$ holds. Let $1 > \ep > 0$ be given and put $\de := \frac{\ep}{1-\ep} > 0$. Using that $\be_{t_0,t_0}^n = \T{id}_{V_B} \colon  V_B \to V_B$ satisfies $\|\T{id}_{V_B}\|_{L_B}=1$ (since $L_B\neq 0)$ in combination with the upper semi-continuity of $t \mapsto \|\be_{t_0,t}^n\|_{L_B}$, we  find an open neighbourhood $U' \su T$ of $t_0$ such that $\| \be_{t_0,t}^n \|_{L_B} \leq 1 + \de$ for all $t \in U'$. For each $t \in U'$ and $\eta \in V_B$ we then estimate that
\[
(1 - \ep) L_B( \be_{t_0}^n(\eta)) = (1 - \ep) L_B\big( \be_{t_0,t}^n(\be_t^n(\eta)) \big) \leq (1 - \ep) \cd (1 + \de) \cd L_B(\be_t^n(\eta))
= L_B(\be_t^n(\eta)) .
\]
This shows that $(2)$ holds. Conversely, suppose that $(2)$ holds. 
Let $\ep > 0$ be given, put $\de := \frac{\ep}{1 + \ep} \in (0,1)$ and choose an open neighbourhood $U \su T$ of $t_0$ such that $(1 - \de) L_B(\be_{t_0}^n(\eta)) \leq L_B(\be_t^n(\eta))$ for all $\eta \in V_B$ and $t \in U$. For each $\ze \in V_B$ with $L_B(\ze) \leq 1$, we then estimate that
\[
L_B( \be_{t_0,t}^n(\ze)) = L_B\big( \be_{t_0}^n( \be_t^{-n}(\ze)) \big) \leq \frac{1}{1-\de} L_B(\ze) \leq \frac{1}{1-\de} = 1 + \ep . 
\]
This shows that $\| \be_{t_0,t}^n \|_{L_B} \leq 1 + \ep = \| \be_{t_0,t_0}^n \|_{L_B} + \ep$ for all $t \in U$ and hence that $(1)$ holds.
\end{proof}





\subsection{Continuous fields of compact quantum metric spaces}\label{ss:contfield}
In this subsection we show that the family of compact quantum metric spaces $\big\{ ( \cro{B,\be_t}, L_t)\big\}_{t \in T}$ fits with the already established notion of a continuous field of compact quantum metric spaces. Since this result is not needed anywhere else in the text, we shall not elaborate on the definition of a continuous field of (order unit) compact quantum spaces, but refer the reader to \cite[Definition 6.4]{Li:GH-dist}.

We start by investigating the continuity properties of the two semi-norms $L^1_t$ and $L^2_t$ as $t$ varies in the compact Hausdorff space $T$:


\begin{lem}\label{l:l1cont}
The map $t\mapsto L^1_t(\eta)$ is continuous for every $\eta \in C_c(\mathbb{Z}, V_B)$.
\end{lem}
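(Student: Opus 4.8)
The plan is to recognize $L^1_t(\eta)$ as the norm of a single, $t$-independent element of the total algebra $A = \cro{C(T,B),\be}$, evaluated in the fibre $A_t$, and then to invoke the continuous field structure provided by Proposition \ref{p:crossfield}.

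First I would note that, since $\eta \in C_c(\zz,V_B)$, the sum $\sum_n n \cd \eta(n) U^n$ is finite. Each coefficient $\eta(n) \in V_B \su B$ may be regarded as a constant function in $C(T,B)$, so the element
\[
x := \sum_{n = -\infty}^\infty n \cd \eta(n) U^n
\]
defines a genuine element of $A = \cro{C(T,B),\be}$; indeed it lies in the dense convolution subalgebra $C_c(\zz,C(T,B)) \su A$. The point is that $x$ does not depend on $t$.

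Next I would observe that under the quotient map $\T{ev}_t \colon A \to A_t$ of Proposition \ref{p:crossfield}, the unitary $U$ maps to the unitary of $A_t = \cro{B,\be_t}$ implementing $\be_t$, while the constant coefficient $\eta(n) \in C(T,B)$ maps to $\eta(n) \in B$. Hence $\T{ev}_t(x) = \sum_n n \cd \eta(n) U^n$ computed in $A_t$, which is exactly the element whose norm defines $L^1_t(\eta)$; that is, $L^1_t(\eta) = \| x_t \|_t$ for all $t \in T$. Finally, Proposition \ref{p:crossfield} asserts that $A$ is a \emph{continuous} field of $C^*$-algebras, and by the very definition of a continuous field the norm function $t \mapsto \| x_t \|_t$ is continuous on $T$ for every $x \in A$. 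Applying this to the element $x$ above yields continuity of $t \mapsto L^1_t(\eta)$, as desired.

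The only point requiring care -- and the closest thing to an obstacle -- is the passage from the \emph{upper} semi-continuous field structure (which is automatic from the central inclusion $j \colon C(T) \to \C Z(A)$) to the full \emph{continuous} field structure, which is precisely where amenability of $\zz$ enters and which guarantees continuity rather than mere upper semi-continuity of $t \mapsto \|x_t\|_t$. Since this is already packaged into Proposition \ref{p:crossfield}, no additional work is needed and the proof reduces to the identification $L^1_t(\eta) = \|x_t\|_t$ carried out above.
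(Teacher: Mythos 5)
Your proof is correct and follows essentially the same route as the paper: the paper likewise identifies $L^1_t(\eta)$ as $\|\pa(\eta)\|_t$ for the single $t$-independent element $\pa(\eta) = i\sum_n n\,\eta(n)U^n$ of the total algebra and then invokes Proposition \ref{p:crossfield}. Your write-up merely makes explicit the (correct) identification of the constant coefficients as elements of $C(T,B)$ and the role of continuity of the field, which the paper leaves implicit.
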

\begin{proof}
Let $\eta= \sum_{n=-\infty}^\infty \xi_n U^n \in C_c(\B Z, V_B)$ and define $\pa(\eta) := i \cd \sum_{n=-\infty}^\infty n  \xi_n U^n \in C_c(\B Z,V_B)$. We have that
$L^1_t(\eta) = \| \pa(\eta) \|_t$ for all $t \in T$, and the desired continuity result therefore follows from Proposition \ref{p:crossfield}. 
\end{proof}

\begin{lem}\label{l:l2cont}
Suppose that the map $t \mapsto L_B(\be_t^n(\xi))$ is upper semi-continuous for every $\xi \in V_B$ and $n \in \zz$ and that the semi-norm $L_B \colon  V_B \to [0,\infty)$ is lower semi-continuous. Then the map $t \mapsto L^2_t(\eta)$ is continuous for every $\eta \in C_c(\B Z, V_B)$.
\end{lem}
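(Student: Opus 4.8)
The plan is to exploit the fact that $L^2_t(\eta)$ depends on $t$ only through the fibrewise involution $*_t$ on $A_t = \cro{B,\be_t}$, and that this involution enters in a completely explicit way. First I would record that for $\eta \in C_c(\zz,V_B)$ the function $L_B \ci \eta \colon \zz \to [0,\infty)$, namely $n \mapsto L_B(\eta(n))$, is \emph{independent} of $t$, since each coefficient $\eta(n) \in V_B$ is a fixed element; hence the first term $\bvert{L_B \ci \eta}$ contributes a constant, trivially continuous, function of $t$. The only genuine $t$-dependence sits in the second term. Writing out the involution in the fibre $A_t$ gives $\eta^{*_t}(n) = \be_t^n(\eta(-n)^*)$, so that
\[
\bvert{L_B \ci \eta^{*_t}} = \bvert{\textstyle\sum_{n} a_n(t)\, e_n}, \qquad a_n(t) := L_B\big( \be_t^n(\eta(-n)^*) \big).
\]

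Next I would argue that each coordinate function $a_n$ is continuous on $T$. Upper semi-continuity of $a_n$ is exactly the hypothesis of the lemma applied to the element $\eta(-n)^* \in V_B$ (recall that $V_B$ is $*$-invariant) and the integer $n$. For the reverse inequality I would invoke Remark \ref{rem:cont-rem}: the standing assumption that $t \mapsto \be_t(b)$ is continuous for all $b \in B$ forces $t \mapsto \be_t^n(\eta(-n)^*)$ to be continuous into $V_B$, and composing with the lower semi-continuous semi-norm $L_B$ shows that $a_n$ is lower semi-continuous. Being simultaneously upper and lower semi-continuous, each $a_n \colon T \to [0,\infty)$ is continuous.

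Finally I would assemble these pieces. Since $\eta$ is compactly supported, only finitely many $a_n$ fail to be identically zero, say those with $|n| \leq N$. The map $t \mapsto \bvert{L_B \ci \eta^{*_t}}$ is then continuous because, by the reverse triangle inequality and homogeneity of the norm $\bvert{\cd}$,
\[
\Big| \bvert{\textstyle\sum_{|n|\leq N} a_n(t) e_n} - \bvert{\textstyle\sum_{|n|\leq N} a_n(s) e_n} \Big| \leq \bvert{\textstyle\sum_{|n|\leq N} (a_n(t) - a_n(s)) e_n} \leq \sum_{|n| \leq N} |a_n(t) - a_n(s)|\, \bvert{e_n},
\]
and the right-hand side tends to $0$ as $s \to t$ by continuity of the finitely many $a_n$. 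As $L^2_t(\eta)$ is the maximum of this continuous function and the constant $\bvert{L_B \ci \eta}$, it is continuous in $t$, as claimed.

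The main subtlety — indeed the only place where both hypotheses are genuinely used — is the continuity of the individual coefficients $a_n$: the problem splits into an upper semi-continuity estimate (the explicit assumption, controlling the size of $L_B \ci \be_t^n$) and a lower semi-continuity estimate (obtained from continuity of the action together with lower semi-continuity of $L_B$), and neither alone would suffice. Once continuity of the $a_n$ is in hand, the passage to $\bvert{\cd}$ is routine, because all the relevant vectors live in a fixed finite-dimensional subspace of $C_c(\zz)$ on which the norm $\bvert{\cd}$ is Lipschitz in the coordinates.
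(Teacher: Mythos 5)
Your proof is correct and follows essentially the same route as the paper's: continuity of each coefficient $t \mapsto L_B(\be_t^n(\eta(-n)^*))$ is obtained by combining the upper semi-continuity hypothesis with lower semi-continuity coming from the continuity of the action and of $L_B$ (this is exactly Remark \ref{rem:cont-rem}), and the passage to $\nvert{\cd}$ is handled on the finite-dimensional span of $\{e_n \mid |n| \leq N\}$. The only cosmetic difference is that the paper invokes equivalence of norms on that finite-dimensional space where you use the triangle inequality directly; both are routine.
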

\begin{proof}
For each $\xi \in V_B$ and $n \in \zz$ we saw in Remark \ref{rem:cont-rem} that the function $t \mapsto L_B(\be_t^n(\xi))$ is continuous on $T$. 

For fixed $\eta = \sum_{n = - \infty}^\infty \xi_n U^n \in C_c(\B Z,V_B)$ there exists an $N\in \nn$ such that $\eta = \sum_{n = - N}^N \xi_n U^n$,
and for $t \in T$ we may express the semi-norm $L^2_t(\eta) \in [0,\infty)$ as follows:
\[
 L^2_t(\eta) = \max \big\{ \bvert{ \sum_{n=-N}^N L_B(\xi_n) e_n}, \bvert{\sum_{n=-N}^N  L_B\big(\beta_t^n(\xi_{-n}^*)\big) e_n} \big\} . 
\]
We just saw that the coefficients $L_B\big(\beta_t^n(\xi_{-n}^*)\big)$ vary continuously in $t$, and since all norms on the finite dimensional space $\T{span}_{\cc}\{e_n \mid -N\leq n \leq N\}\su C_c(\ZZ)$ are equivalent the desired continuity of $t\mapsto L^2_t(\eta)$ follows.
\end{proof}

Recall that, as a vector space over $\cc$, we have that $V_t := C_c(\zz,V_B) \su A_t = \cro{B,\be_t}$ for all $t \in T$. 
\begin{prop}
Suppose that the map $t \mapsto L_B(\be_t^n(\xi))$ is upper semi-continuous for every $\xi \in V_B$ and $n \in \zz$ and that the semi-norm $L_B \colon  V_B \to [0,\infty)$ is lower semi-continuous.
Then the family $\{ (V_t^{\sa},L_t^{\sa})_{t\in T} \}$ together with the continuous field of real Banach spaces $\cro{C(T,B),\be}^{\sa} \su \prod_{t \in T} \cro{B,\be_t}^{\sa}$ form a continuous field of order unit compact quantum spaces in the sense of \cite[Definition 6.4]{Li:GH-dist}.
\end{prop}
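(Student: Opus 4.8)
The plan is to check the defining properties of a continuous field of order unit compact quantum metric spaces from \cite[Definition 6.4]{Li:GH-dist}, taking as the underlying continuous field of order unit spaces the field of real Banach spaces $\cro{C(T,B),\be}^\sa \su \prod_{t \in T}\cro{B,\be_t}^\sa$ appearing in the statement (and constructed in the proof of Theorem \ref{t:GHconv}). The fibrewise requirement is handed to us directly by Theorem \ref{t:quacross}: each $(\cro{B,\be_t},L_t)$ is a compact quantum metric space, equivalently each $(V_t^\sa,L_t^\sa)$ is an order unit compact quantum metric space by Theorem \ref{t:quamet}. The uniform bound on the radii is equally cheap, since the base space $(B,L_B)$ and hence its radius $r_B$ are independent of $t$, so the radius estimate of Theorem \ref{t:quacross} gives $r_t \leq r_B + \tfrac{\pi}{2}$ for all $t \in T$ and thus $\sup_{t \in T} r_t < \infty$.

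It remains to establish the continuity of the Lip-norms along sufficiently many continuous sections. At the level of constant-coefficient (complex) sections this is already available: for a fixed $\eta \in C_c(\zz,V_B)$, viewed as an element of $A = \cro{C(T,B),\be}$ with constant coefficients, the function $t \mapsto L_t(\eta) = \max\{ L^1_t(\eta), L^2_t(\eta)\}$ is continuous, being the pointwise maximum of the two continuous functions produced by Lemma \ref{l:l1cont} and Lemma \ref{l:l2cont}; here Lemma \ref{l:l2cont} uses exactly the two standing hypotheses, namely lower semi-continuity of $L_B$ and upper semi-continuity of $t \mapsto L_B(\be_t^n(\xi))$ (see also Remark \ref{rem:cont-rem}).

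The subtle point, which I expect to be the crux, is that the order unit structure forces us to work with \emph{self-adjoint} sections, whereas the subspace $V \su A$ of constant-coefficient elements is not invariant under the involution of $A$: symmetrising a constant section $\eta$ inside $A$ produces Fourier coefficients $\tfrac12\big(\xi_n + \be_t^n(\xi_{-n}^*)\big)$, and the hypotheses give no handle on the continuity of $t \mapsto L_B$ of such a genuinely mixed sum. I would sidestep this by constructing, through a prescribed self-adjoint element $a = \sum_n a_n U^n \in V_{t_0}^\sa$, a section $f$ adapted to the fibrewise involutions: set $f_n := a_n$ (constant) for $n \geq 0$ and $f_{-n}(t) := \be_t^{-n}(a_n^*)$ for $n > 0$. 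Using that every $\be_t$ is a $*$-automorphism one checks directly that $f(t) := \sum_n f_n(t) U^n$ lies in $V_t^\sa$ for each $t$, and that $f(t_0) = a$ (this last point uses the fibrewise self-adjointness relation $a_{-n} = \be_{t_0}^{-n}(a_n^*)$, valid at $t_0$). The gain is that now each Fourier coefficient of $f(t)$ is either constant in $t$ or a single translate $\be_t^{-n}(a_n^*)$, so $t \mapsto L_B(f_n(t))$ is continuous for every $n$ by hypothesis; as only finitely many coefficients are nonzero and all norms on the corresponding finite dimensional subspace of $C_c(\zz)$ are equivalent, $t \mapsto L^2_t(f(t))$ is continuous, while $t \mapsto L^1_t(f(t)) = \big\| \sum_n n f_n(t) U^n \big\|_t$ is continuous because $\sum_n n f_n U^n$ is an honest element of $A$ and the field is continuous (Proposition \ref{p:crossfield}). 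Hence through each $a \in V_{t_0}^\sa$ there runs a continuous self-adjoint section along which the Lip-norm varies continuously, which, together with the fibrewise compact quantum metric property and the uniform radius bound, furnishes all the data demanded by \cite[Definition 6.4]{Li:GH-dist}.
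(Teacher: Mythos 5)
Your proof is correct, and in the one step that matters it takes a genuinely different route from the paper's. Both arguments agree on the framing (the continuous field of real Banach spaces from the proof of Theorem \ref{t:GHconv}, the fibrewise statement from Theorem \ref{t:quacross}, and the observation that the constant-coefficient subspace $V$ is not $*$-invariant in $A$), but they diverge on how to produce the self-adjoint section through a given $x \in V_{t_0}^{\sa}$. The paper lifts $x$ to the constant-coefficient section $\zeta$ and symmetrises inside $A$, obtaining $\eta = \tfrac12(\zeta+\zeta^*)$ with mixed coefficients $\tfrac12\big(x(n)+\be_t^n(x(-n)^*)\big)$; since $L_B$ of such a sum is only controlled from above, the paper then combines subadditivity, the order-preserving property of $\nvert{\cd}$ and the upper semi-continuity hypothesis to show that $t \mapsto L^2_t(\eta_t)$ is merely \emph{upper semi-continuous} at $t_0$ --- which is all that Li's Definition 6.4 demands. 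You instead build the section with fibre-adapted coefficients $f_{-n}(t)=\be_t^{-n}(a_n^*)$, so that each Fourier coefficient is either constant or a single translate; this makes $t\mapsto L_B(f_n(t))$ genuinely continuous coefficient by coefficient (via Remark \ref{rem:cont-rem}) and hence yields full continuity of $t\mapsto L_t(f(t))$ by the same finite-dimensional norm-equivalence argument as in Lemma \ref{l:l2cont}. Your route buys a strictly stronger conclusion along the chosen section at no extra cost and does not need the order-preserving property at this point; the paper's route requires less bookkeeping about how the section is assembled. Both achieve exact interpolation $f(t_0)=x$, and your extra remark on the uniform radius bound $r_t \leq r_B + \tfrac{\pi}{2}$ is harmless.
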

\begin{proof}
Since the inclusion $j \colon C(T) \to \cro{C(T,B),\be}$ determines a continuous field of $C^*$-algebras by Proposition \ref{p:crossfield}, we already saw in the proof of Theorem \ref{t:GHconv} that the real part $\cro{ C(T,B),\be}^\sa$ identifies with the continuous sections of a continuous field of real Banach spaces with fibres $\{ \cro{B,\be_t}^{\sa} \}_{t \in T}$. Moreover, $\cro{ C(T,B),\be}^\sa$ clearly contains the unit section $1 = \{ 1_t \}_{t \in T}$. Thus, to prove the claim of the proposition we need, for given $t_0\in T$, $x\in V_{t_0}^{\sa}$ and $\ep>0$, to find a continuous section $\eta\in A^{\sa}$ such that $t\mapsto L_t^\sa(\eta_t)$ is upper semi-continuous at $t_0$ and such that $\|\eta_{t_0}-x\|_{t_0}<\ep$ and $L_{t_0}^\sa(\eta_{t_0})<L_{t_0}^\sa(x)+\ep$. 
However, for our given $x\in V_{t_0}^{\T{sa}}:=C_c(\zz, V_B )\cap  \cro{B,\be_{t_0}}^{\T{sa}}$ one may consider the element $\zeta\in C_c(\zz, C(T,B)) \su A$ given by $\zeta(n)(t)=x(n)$ and define $\eta := \tfrac{1}{2}(\zeta +\zeta^*) \in A^\sa$. The proof is therefore complete if we can show that $t\mapsto L_t(\eta_t)$ is upper semi-continuous at $t_0$. Writing $x=\sum_{n=-N}^N x(n) U^n$, we have that $\eta=\tfrac{1}{2}\sum_{n=-N}^N\big( x(n) +\beta_t^{n}(x(-n)^*) \big)U^n$, where the coefficients $x(n) +\beta_t^{n}(x(-n)^*)$ are now considered as elements in $C(T,B)$, and hence
\[
L_t^1(\eta_t)=\big\| \frac{1}{2}\sum_{n=-N}^N n \cd \big( x(n) +\beta_t^n(x(-n)^*) \big)U^n  \big\|_t .
\]
But, $\frac{1}{2}\sum_{n=-N}^N n \cd \big( x(n) +\beta_t^{n}(x(-n)^*) \big)U^n$ is still an element in $A = \cro{ C(T,B),\be}$ and it therefore follows from Proposition \ref{p:crossfield} that $t\mapsto L_t^1(\eta_t)$ is actually continuous (at every point in $T$).

So we may focus on showing that $t\mapsto L^2_t(\eta_t)$ is upper semi-continuous at $t_0$. Putting $C := \nvert{ \sum_{n = -N}^N e_n}$ we may use our assumption on upper semi-continuity to arrange that $L_B\big(\beta_t^n(x(-n)^*))\big) \leq L_B\big(\beta_{t_0}^n(x(-n)^*))\big) + \frac{2 \ep}{C}$ for all $n \in \{-N,\ldots,N\}$ and all $t$ in a suitable open neighbourhood $U$ of $t_0$. For each $t \in U$, we then estimate that 
\begin{align*}
L^2_t(\eta_t) &= \frac12 \bvert{\sum_{n=-N}^N L_B\big(x(n) +\beta_t^n(x(-n)^*)\big)e_n  }\\
&\leq \frac12 \bvert{  \sum_{n=-N}^N\big(L_B(x(n)) + L_B(\beta_t^n(x(-n)^*))\big)e_n  } \\
& \leq \frac12\bvert{  \sum_{n=-N}^N\big(L_B(x(n)) + L_B(\beta_{t_0}^n(x(-n)^*)) + \frac{2 \ep}{C} \big)e_n  } \\
& \leq \bvert{  \sum_{n=-N}^N L_B(x(n)) e_n  } +\ep
= L^2_{t_0}(\eta_{t_0}) +\ep,
\end{align*}
where we used that $\nvert{\cd}$ is order preserving and that $\eta_{t_0}=x$ is selfadjoint so that $x(n)=\beta_{t_0}^{n}(x(-n)^*)$ for all $n \in \zz$. This establishes the claimed upper semi-continuity result.
\end{proof}





\subsection{The convergence result for crossed products}\label{ss:procrocon}
We are now ready to embark for real on the proof of Theorem \ref{t:crosconv}. Our first step is to investigate the convergence problem when we have an upper bound on the distance to the diagonal. More precisely, we fix an $N \in \nn_0 $ and define the complex subspace $V^N \su \cro{C(T,B),\be}$ by
\[
V^N := \big\{ \xi \in C_c(\zz,C(T,B)) \mid \xi(n) = 0 \T{ for } |n| > N \T{ and } \xi(n) \in i(V_B) \T{ for } |n| \leq N \big\},
\]
where the $*$-homomorphism $i \colon  B \to C(T,B)$ is given by $i(b)(t) := b$. We define the norm
\[
\| \cd \|_{\star} \colon  V^N \to [0,\infty) \q \| \xi \|_{\star} := \max_{|n| \leq N} \| \xi(n) \|_{C(T,B)} .
\]

In the next two lemmas we verify the preliminary conditions for having a uniform family of order unit compact quantum metric spaces as in Definition \ref{d:unicont}.

\begin{lemma}\label{l:starstar}
The evaluation map $\T{ev}_t \colon  V^N \to C_c(\zz,V_B)$ is injective and for each $t \in T$, $\T{ev}_t(V^N) =: V^N_t$ is invariant under $*_t \colon  \cro{B,\be_t} \to \cro{B,\be_t}$. Moreover, we have that $\| \xi^{*_t} \|_\star = \| \xi \|_\star$, where the involution $*_t \colon  V^N \to V^N$ is inherited from $*_t \colon  V^N_t \to V^N_t$ for all $t \in T$.
\end{lemma}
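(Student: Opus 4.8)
The plan is to unwind all three assertions directly from the definitions, using the explicit description of the involution on a crossed product by $\zz$ recorded in Section \ref{s:noniso}. The key preliminary observation is that an element $\xi \in V^N$ is determined by finitely many coefficients $\xi(n) = i(b_n)$ with $b_n \in V_B$ for $|n| \leq N$ and $b_n = 0$ otherwise, where $i \colon B \to C(T,B)$ is the inclusion $i(b)(t) = b$ as a constant function. Since the quotient map $\cro{C(T,B),\be} \to \cro{B,\be_t}$ of Proposition \ref{p:crossfield} is the one induced by the equivariant $*$-homomorphism $\T{ev}_t \colon C(T,B) \to B$, it acts on the coordinate algebra by evaluating each Fourier coefficient; hence $\T{ev}_t(\xi)$ is simply the function $n \mapsto b_n \in V_B$, which is \emph{independent} of $t$. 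Injectivity of $\T{ev}_t$ on $V^N$ is then immediate from injectivity of $i$, since $\T{ev}_t(\xi) = 0$ forces every $b_n$ to vanish, whence $\xi = 0$.

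For the $*_t$-invariance of $V^N_t$ I would invoke the formula $\eta^{*_t}(n) = \be_t^n(\eta(-n)^*)$, valid for $\eta \in C_c(\zz,V_B) \subseteq \cro{B,\be_t}$. Applied to $\eta = \T{ev}_t(\xi)$, so that $\eta(n) = b_n$, this yields $\eta^{*_t}(n) = \be_t^n(b_{-n}^*)$. This coefficient vanishes for $|n| > N$ because $b_{-n} = 0$ there, and for $|n| \leq N$ it lies in $V_B$: indeed $b_{-n} \in V_B$, so $b_{-n}^* \in V_B$ by $*$-invariance of $V_B$, and then $\be_t^n(b_{-n}^*) \in V_B$ since $\be_t$ and $\be_t^{-1}$ preserve $V_B$ and therefore so does every power $\be_t^n$. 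Hence $\eta^{*_t} \in V^N_t$, establishing that $V^N_t$ is $*_t$-invariant.

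Finally, the norm identity is a reindexing argument built on the fact that each $\be_t^n$ is a $*$-automorphism and hence $C^*$-isometric. Transporting the fibrewise involution back to $V^N$ along the bijection $\T{ev}_t$, the element $\xi^{*_t}$ has coefficients $\xi^{*_t}(n) = i(\be_t^n(b_{-n}^*))$. Using $\|i(c)\|_{C(T,B)} = \|c\|_B$ together with the isometry of $\be_t^n$ and of the $C^*$-involution, I would compute
\[
\|\xi^{*_t}\|_\star = \max_{|n|\leq N} \|\be_t^n(b_{-n}^*)\|_B = \max_{|n|\leq N} \|b_{-n}\|_B = \max_{|m|\leq N}\|b_m\|_B = \|\xi\|_\star,
\]
the third equality being the substitution $m = -n$, which permutes the index set $\{-N,\ldots,N\}$.

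There is no serious obstacle here; the calculations are routine. The only point requiring genuine care is to keep straight that, although $\T{ev}_t(\xi)$ itself does not depend on $t$, the pulled-back involution $\xi^{*_t}$ \emph{does} depend on $t$ through the automorphism $\be_t^n$ hitting the coefficients. Thus the equality $\|\xi^{*_t}\|_\star = \|\xi\|_\star$ is not a triviality but asserts a cancellation that is uniform in $t$, and this uniformity is exactly what the $C^*$-isometry of each $\be_t^n$ delivers; this observation is what later makes $\|\cd\|_\star$ eligible as the $*$-invariant reference norm demanded by Definition \ref{d:unicont}(4).
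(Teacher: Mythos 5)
Your proposal is correct and follows essentially the same route as the paper's proof: injectivity from the constancy of the Fourier coefficients, $*_t$-invariance from the formula $\eta^{*_t}(n)=\be_t^{n}(\eta(-n)^*)$ together with the stability of $V_B$ under $\be_t^{\pm 1}$ and the adjoint, and the norm identity from the fact that each $\be_t^{n}$ is a $C^*$-isometric $*$-automorphism combined with a reindexing of $\{-N,\ldots,N\}$. The only cosmetic difference is that the paper records the adjoint as $\xi^{*_t}=\sum_{n=-N}^{N} i(\be_t^{-n}(\eta_n^*))U^{-n}$ and takes the maximum over the original index, whereas you index by the new coefficient position; the computations are identical.
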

\begin{proof}
{Since the Fourier coefficients of elements in $V^N$ are constant functions with values in $B$ it is clear that $\T{ev}_t \colon  V^N \to V^N_t$ is injective. Moreover, since $V_B$ is stable under the action of $\be_t$ and $\be_t^{-1}$ and under the adjoint operation it follows from the structure of $V^N$ that the involution $*_t$ preserves the subspace $V^N_t$.} Let now $\xi \in V^N$ be given. For each $n \in \{-N,\ldots,N\}$ choose $\eta_n \in V_B$ with $\xi(n) = i(\eta_n)$. We compute that
\[
\begin{split}
\| \xi^{*_t} \|_\star & = \big\| \sum_{n = -N}^N i( \be_t^{-n}(\eta_n^*)) U^{-n} \big\|_\star
= \max_{|n| \leq N} \| \be_t^{-n}(\eta_n^*) \|_B
 = \max_{|n| \leq N} \| \eta_n \|_B = \| \xi \|_\star . \qedhere
\end{split}
\]
\end{proof}

\begin{lemma}\label{l:starineq}
Let $N \in \nn_0 $. We have the inequalities
\[
\begin{split}
 \| \xi \|_\star \leq \| \xi_t \|_t \leq (2N + 1) \| \xi \|_\star \q \mbox{and}  \q L^1_t(\xi) \leq 2N^2 \| \xi \|_\star 
\end{split}
\]
for all $\xi \in V^N$ and all $t \in T$.
\end{lemma}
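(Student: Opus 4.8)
The plan is to exploit that every $\xi \in V^N$ has \emph{constant} Fourier coefficients. Writing $\xi(n) = i(\eta_n)$ with $\eta_n \in V_B$ for $|n| \leq N$ (and $\eta_n = 0$ otherwise), the evaluation $\xi_t$ is, for every $t \in T$, the element $\sum_{n=-N}^N \eta_n U^n \in \cro{B,\be_t}$ with the \emph{same} coefficients $\eta_n$; only the ambient $C^*$-norm $\| \cd \|_t$ varies with $t$. Since $\| \xi \|_\star = \max_{|n| \leq N} \| \eta_n \|_B$, all three inequalities reduce to comparing $\| \xi_t \|_t$ and $L^1_t(\xi)$ with the coefficient norms $\| \eta_n \|_B$, uniformly in $t$.

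For the two upper bounds I would use that the canonical inclusion $\io \colon B \to \cro{B,\be_t}$ is an isometric $*$-homomorphism and that $U$ is a unitary, so that $\| \eta_n U^n \|_t = \| \eta_n \|_B$ for each $n$. The triangle inequality then gives
\[
\| \xi_t \|_t \leq \sum_{n=-N}^N \| \eta_n U^n \|_t = \sum_{n=-N}^N \| \eta_n \|_B \leq (2N+1) \| \xi \|_\star ,
\]
and, applied to $L^1_t(\xi) = \big\| \sum_{n=-N}^N n \cd \eta_n U^n \big\|_t$,
\[
L^1_t(\xi) \leq \sum_{n=-N}^N |n| \cd \| \eta_n \|_B \leq \Big( \sum_{n=-N}^N |n| \Big) \| \xi \|_\star = N(N+1) \| \xi \|_\star \leq 2N^2 \| \xi \|_\star ,
\]
the last inequality holding since $N(N+1) \leq 2N^2$ for all $N \in \nn_0$ (trivially for $N = 0$).

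For the lower bound $\| \xi \|_\star \leq \| \xi_t \|_t$ I would recover each coefficient via the Fourier maps of the fibre: as recorded just before Lemma \ref{l:lowsemi}, $\eta_n = \tau_{-n}(\xi_t)$, where $\tau_{-n}(x) = \tau_0(x U^{-n})$ is the composition of the conditional expectation $\tau_0 \colon \cro{B,\be_t} \to B$ with right multiplication by the unitary $U^{-n}$, hence a contraction for $\| \cd \|_t$. Therefore $\| \eta_n \|_B = \| \tau_{-n}(\xi_t) \|_B \leq \| \xi_t \|_t$ for every $n$, and taking the maximum over $|n| \leq N$ yields $\| \xi \|_\star \leq \| \xi_t \|_t$.

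There is no serious obstacle here; the only point that genuinely needs care is the observation in the first paragraph --- that the coefficients $\eta_n$ are literally independent of $t$ because the elements of $V^N$ are built from constant functions --- since this is precisely what makes the estimates uniform in $t$ and hence usable as the norm-equivalence bound \eqref{eq:normfield} required in Definition \ref{d:unicont}. The contractivity of $\tau_{-n}$ and the isometry of $\io$ are standard features of reduced crossed products and may be quoted directly.
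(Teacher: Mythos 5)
Your proof is correct and follows essentially the same route as the paper's: the lower bound is obtained by recovering the coefficients via the contractive maps $\tau_{-n} = \tau_0(\,\cdot\, U^{-n})$, and the two upper bounds by the triangle inequality together with $\|\eta_n U^n\|_t = \|\eta_n\|_B$. The only difference is that you spell out the elementary estimate $\sum_{|n|\leq N}|n| = N(N+1) \leq 2N^2$, which the paper leaves implicit.
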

\begin{proof}
Write $\xi = \sum_{n = -N}^N i(\eta_n) U^n$ for $\eta_n \in V_B$ and let $t \in T$ be given. For each $n \in \{-N,\ldots,N\}$, we have that
\[
\| i(\eta_n) \|_{C(T,B)} = \| \eta_n \|_B = \| \tau_0(\xi_t \cd (U^*)^n) \|_B \leq \| \xi_t \cd (U^*)^n \|_t = \| \xi_t \|_t,
\]
where $\tau_0 \colon  \cro{B,\be_t} \to B$ is the conditional expectation introduced right before Lemma \ref{l:lowsemi}. This shows that $\| \xi \|_\star \leq \| \xi_t \|_t$. The remaining inequalities follow by an application of the triangle inequality.
\end{proof}

The next proposition establishes our convergence result in the setting where there is an upper bound on the distance to the diagonal:

\begin{prop}\label{p:GHband}
Suppose that the following conditions hold:
\begin{enumerate}
\item The semi-norm $L_B \colon  V_B \to [0,\infty)$ is lower semi-continuous and $(B,L_B)$ is a compact quantum metric space;
\item For each $n \in \zz$ and each $\eta \in V_B$, the map $T \to [0,\infty)$ defined by $t \mapsto L_B( \be_t^n(\eta))$ is upper semi-continuous;
\item For each $n \in \zz$, $\ep > 0$ and $t_0 \in T$ there exists an open neighbourhood $U \su T$ of $t_0$ such that
\[
(1- \ep) L_B( \be_{t_0}^n(\eta))  \leq L_B(\be_t^n(\eta))
\]
for all $t \in U$ and $\eta \in V_B$.

\end{enumerate}
Then $\big\{ \big(\left(V_t^N\right)^{\T{sa}},L_t|_{\left(V_t^N\right)^{\T{sa}}}\big) \big\}_{t \in T}$ is a uniform family  of order unit compact quantum metric spaces. In particular, we have that
\[
\lim_{t \to t_0} \T{dist}_q\big( \left(V_t^N\right)^\sa, \left(V_{t_0}^N\right)^\sa \big) = 0  \q \mbox{for all } t_0 \in T .
\]
\end{prop}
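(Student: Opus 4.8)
The plan is to verify the four conditions of Definition \ref{d:unicont} for the family $\big\{\big((V_t^N)^{\T{sa}},L_t|_{(V_t^N)^{\T{sa}}}\big)\big\}_{t\in T}$ sitting inside the continuous field $A=\cro{C(T,B),\be}$ of Proposition \ref{p:crossfield}, and then to invoke Theorem \ref{t:GHconv}. Three of the four conditions are bookkeeping built on the lemmas already at hand. Condition (4) is Lemma \ref{l:starstar} together with Lemma \ref{l:starineq}: they provide $\nvert{\cd}$ with $\| \xi^{*_t}\|_\star=\|\xi\|_\star$ and $\frac{1}{2N+1}\|\xi_t\|_t\leq \|\xi\|_\star\leq \|\xi_t\|_t$, so \eqref{eq:normfield} holds with $C=2N+1$; Lemma \ref{l:starstar} also supplies the preliminary requirements (injectivity of $\T{ev}_t$, $1\in V^N$, and $*_t$-invariance of $V_t^N$). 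Condition (2) follows since $L_t=\max\{L_t^1,L_t^2\}$, and by Remark \ref{rem:cont-rem} the hypotheses (1) and (2) of the proposition make $t\mapsto L_B(\be_t^n(\xi))$ continuous, so that Lemmas \ref{l:l1cont} and \ref{l:l2cont} give continuity of $t\mapsto L_t^1(y_t)$ and $t\mapsto L_t^2(y_t)$, whence of their maximum. For condition (1), I would observe that $\B B_1\big(L_t|_{(V_t^N)^{\T{sa}}}\big)\su \B B_1(L_t^{\T{sa}})$ inside $V_{A_t}^{\T{sa}}$; since $(\cro{B,\be_t},L_t)$ is a compact quantum metric space by Theorem \ref{t:quacross}, the image of this ball in $A_t^{\T{sa}}/\rr 1$ is totally bounded, and because $\rr 1\su V_t^N$ the quotient norm on $(V_t^N)^{\T{sa}}/\rr 1$ is the restriction of the one on $A_t^{\T{sa}}/\rr 1$; total boundedness is thus inherited and \cite[Theorem 1.8]{Rie:MSA} yields the order unit compact quantum metric space structure.

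The substance is condition (3): a uniform, \emph{multiplicative} lower bound $(1-\ep)L_{t_0}(y_{t_0})\leq L_t(y_t)$ for all $y\in V^N$ and $t$ near $t_0$. I would split $L=\max\{L^1,L^2\}$ and treat the summands differently. For $L^2$ the bound holds for every $y$ on a single neighbourhood: writing $L_t^2(y_t)=\max\{A,B_t\}$ with $A=\nvert{\sum_n L_B(\eta_n)e_n}$ independent of $t$ and $B_t=\nvert{\sum_n L_B(\be_t^n(\eta_{-n}^*))e_n}$, hypothesis (3) of the proposition applied with $\ze=\eta_{-n}^*$, together with the order-preserving property and homogeneity of $\nvert{\cd}$, gives $(1-\ep)B_{t_0}\leq B_t$ after intersecting the finitely many neighbourhoods for $|n|\leq N$; since $(1-\ep)A\leq A$ this yields $(1-\ep)L_{t_0}^2(y_{t_0})\leq L_t^2(y_t)$. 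For $L^1$ no such clean bound is available for every $y$, so I would first normalise: by homogeneity and the invariance of both $L_t^1$ and $L_t^2$ under adding scalars, it suffices to treat $y$ with $L_{t_0}(y_{t_0})=1$ and, after a scalar translation bounded through the radius of $(\cro{B,\be_{t_0}},L_{t_0})$ via Proposition \ref{p:radbou}, $\|y\|_\star\leq R'$. The crucial point is that this normalised set $S_{t_0}$ is totally bounded in $\nvert{\cd}$: the constraint $L_{t_0}^2(y_{t_0})\leq 1$ forces $L_B(\eta_n)\leq 1/\nvert{e_n}$ for each $|n|\leq N$ (order-preserving), while $\|\eta_n\|_B\leq \|y\|_\star\leq R'$, so each Fourier coefficient ranges over a set that is totally bounded in $B$ because $(B,L_B)$ is a compact quantum metric space (Theorem \ref{t:quamet}), and a finite product of totally bounded sets is totally bounded.

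On $S_{t_0}$ I would then run a net argument for $L^1$: pick a finite $\de_1$-net $\{w^k\}$ for $S_{t_0}$ in $\nvert{\cd}$, absorb the differences using Lemma \ref{l:starineq} ($L_t^1(\xi)\leq 2N^2\|\xi\|_\star$ on $V^N$), and use continuity of $t\mapsto L_t^1(w^k_t)$ (Lemma \ref{l:l1cont}) on the finitely many net points to obtain, on a neighbourhood $U_1$, the \emph{additive} estimate $L_t^1(y_t)\geq L_{t_0}^1(y_{t_0})-\ep$ for all $y\in S_{t_0}$. The main obstacle is exactly that the net produces only an additive error while condition (3) demands a multiplicative one, a gap that is fatal in isolation precisely when $L_{t_0}^1(y_{t_0})$ is small. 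The resolution, and the heart of the argument, is to recombine the two estimates through the max-structure and the normalisation $\max\{L_{t_0}^1(y_{t_0}),L_{t_0}^2(y_{t_0})\}=1$: if $L_{t_0}^1$ attains the maximum then the additive $L^1$-estimate already gives $L_t(y_t)\geq L_t^1(y_t)\geq 1-\ep$, whereas if $L_{t_0}^2$ attains it then the $L^2$-estimate gives $L_t(y_t)\geq L_t^2(y_t)\geq 1-\ep$; either way $L_t(y_t)\geq 1-\ep=(1-\ep)L_{t_0}(y_{t_0})$ on $U=U_1\cap U_2$, and undoing the normalisation delivers the bound for all $y$.

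With the four conditions verified, the family is a uniform family of order unit compact quantum metric spaces, so it remains to feed this into Theorem \ref{t:GHconv}. The radius bound $r_A\leq r_B+\tfrac{\pi}{2}$ from Theorem \ref{t:quacross}, combined with Proposition \ref{p:radbou} (the defining inequality $\inf_{\la}\|\xi+\la 1\|_t\leq r\,L_t(\xi)$ passes to the subspace $(V_t^N)^{\T{sa}}$), yields $r_t\leq r_B+\tfrac{\pi}{2}$ uniformly in $t$, so $\sup_{t}r_t<\infty$; and $\cro{B,\be_{t_0}}$ is separable because $B$ is (metrisability of $S(B)$ forces separability of $B$, which is inherited by crossed products with $\zz$). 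Theorem \ref{t:GHconv} then gives $\lim_{t\to t_0}\T{dist}_q\big((V_t^N)^{\T{sa}},(V_{t_0}^N)^{\T{sa}}\big)=0$ for every $t_0\in T$, completing the proof.
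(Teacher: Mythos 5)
Your proposal is correct and follows essentially the same route as the paper: verifying conditions (1), (2) and (4) of Definition \ref{d:unicont} via Theorem \ref{t:quacross} and Lemmas \ref{l:starstar}, \ref{l:starineq}, \ref{l:l1cont}, \ref{l:l2cont}, then establishing condition (3) by combining the genuinely multiplicative bound for $L^2$ with an additive finite-net estimate for $L^1$ on the normalised, totally bounded set $\{L_{t_0}(y_{t_0})=1\}$, recombined through the max-structure, and finally feeding the uniform radius bound and separability into Theorem \ref{t:GHconv}. The only cosmetic difference is that the paper runs the net argument in the quotient $V^N_{t_0}/\cc 1$ with the quotient norm $\|\cd\|_\star^{\sim}$ rather than translating by scalars as you do, and your two mentions of ``totally bounded in $\nvert{\cd}$'' should read $\|\cd\|_\star$.
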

\begin{proof}
We start by verifying the four conditions in Definition \ref{d:unicont}. To verify condition $(1)$ we apply Theorem \ref{t:quacross} which shows that each fibre $(\cro{B,\be_t},L_t)$ is a compact quantum metric space and this implies that the real part is an order unit compact quantum metric space. Condition $(1)$ then follows since $(V^N_t)^{\T{sa}} \su V_t^{\T{sa}}$, see \cite[Theorem 1.8]{Rie:MSA}. We already verified condition $(4)$ in Lemma \ref{l:starstar} and Lemma \ref{l:starineq} and condition $(2)$ follows from Lemma \ref{l:l1cont} and Lemma \ref{l:l2cont}, so we focus on condition $(3)$. Let $1 > \ep > 0$ and $t_0 \in T$ be given. We choose an open neighbourhood $U \su T$ of $t_0$ such that
\[
(1 - \ep) L_B( \be_{t_0}^n(\eta))  \leq L_B(\be_t^n(\eta))
\]
for all $n \in \{-N,\ldots,N\}$ and all $\eta \in V_B$. We thus have that
\[
\begin{split}
 (1 - \ep) L^2_{t_0}\big( \sum_{n = -N}^N \eta_n U^n\big) & = (1 - \ep) \cd \max\big\{ \bvert{ \sum_{n = -N}^N L_B(\eta_n) e_n} 
, \bvert{ \sum_{n = -N}^N L_B(\be^n_{t_0}(\eta_{-n}^*)) e_n} \big\} \\
& \leq \max\big\{ (1 - \ep) \bvert{ \sum_{n = -N}^N L_B(\eta_n) e_n} , \bvert{ \sum_{n = -N}^N L_B(\be^n_t(\eta_{-n}^*)) e_n} \big\} \\
&  \leq L^2_t\big( \sum_{n = -N}^N \eta_n U^n\big) 
\end{split}
\]
for all $\sum_{n = -N}^N i(\eta_n) U^n \in V^N$. \\ 
Next, put $\de := \frac{\ep}{4N^2 + 1} > 0$. Since the fibre $( \cro{B,\be_{t_0}},L_{t_0})$ is a compact quantum metric space we may find finitely many $\xi^1,\ldots, \xi^M \in V^N$ with $L_{t_0}(\xi^j_{t_0}) \leq 1$ for all $j \in \{1,2,\ldots,M\}$ such that
\[
q_{t_0}( \B B_1(L_{t_0}) \cap V_{t_0}^N) \su \bigcup_{j = 1}^M \B B_\de\left( q_{t_0}(\xi^j_{t_0}), \| \cd \|_\star^{\sim} \right),
\]
where $q_{t_0} \colon  V^N_{t_0} \to V^N_{t_0} / \cc 1_{A_{t_0}}$ is the quotient map and $\| \cd \|_\star^{\sim}$ is the quotient norm. By shrinking $U$ if necessary, we may assume that
\begin{equation}\label{eq:lipcont2}
L_t^1(\xi_t^j) \in [ L_{t_0}^1(\xi_{t_0}^j) - \de, L_{t_0}^1(\xi_{t_0}^j) + \de] 
\end{equation}
for all $t \in U$ and all $j \in \{1,2,\ldots,M\}$, see Lemma \ref{l:l1cont}.
Next, let $\xi \in V^N$ with $L_{t_0}(\xi_{t_0}) = 1$ be given. Choose $j \in \{1,2,\ldots,M\}$ such that $\| q_{t_0}(\xi_{t_0} - \xi^j_{t_0}) \|_\star^{\sim} \leq \de$ and remark that this implies that $\| q_t(\xi_t - \xi_t^j) \|_\star^{\sim} \leq \de$ for all $t \in T$, see \eqref{eq:invpnt}. 
Using Lemma \ref{l:starineq} and \eqref{eq:lipcont2} we now estimate as follows:
\[
\begin{split}
L_{t_0}^1(\xi_{t_0}) 
& \leq L_{t_0}^1(\xi_{t_0} - \xi_{t_0}^j) + L_{t_0}^1(\xi_{t_0}^j) 
\leq 2N^2 \cd \| q_{t_0}(\xi_{t_0} - \xi^j_{t_0}) \|_\star^{\sim} 
+ L_{t_0}^1(\xi_{t_0}^j) \\
& \leq 2N^2 \de + L_t^1(\xi_t^j) + \de
\leq 2N^2 \de + L_t^1(\xi_t^j - \xi_t) + L_t^1(\xi_t) + \de \\
& \leq 2N^2 \de + 2N^2 \cd \| q(\xi_t - \xi^j_t) \|_\star^{\sim} + L_t^1(\xi_t) + \de
\leq 4N^2 \de + L_t^1(\xi_t) + \de
= L_t^1(\xi_t) + \ep 
\end{split}
\]
for all $t \in U$. We thus have that
\[
1 = L_{t_0}(\xi_{t_0}) = \max\{ L_{t_0}^1(\xi_{t_0}), L_{t_0}^2(\xi_{t_0}) \}
\leq \max\big\{ \ep + L_t^1(\xi_t), \frac{L_t^2(\xi_t)}{1 - \ep} \big\}
\]
for all $t \in U$. But this implies that
\[
\begin{split}
(1 - \ep) L_{t_0}(\xi_{t_0}) = 1 - \ep \leq  
\max\{ L_t^1(\xi_t), L_t^2(\xi_t)\} = L_t(\xi_t)
\end{split}
\]
for all $t \in U$. This proves that $\{ ((V_t^N)^{\T{sa}},L_t|_{V^N_t}^{\T{sa}}) \}_{t \in T}$ is a uniform family of order unit compact quantum metric spaces. \\
To end the proof of the proposition, we remark that by \cite[Theorem 4.3.4]{KaRi:FTO} we have an isometric embedding $B^{\T{sa}} \to C(S(B), \rr)$ and since $B$ is assumed to be a compact quantum metric space its state space is metrizable, and thus separable by compactness, and hence also $C(S(B), \rr)$ is separable. From this it follows that the $C^*$-algebra $\cro{B,\be_t}$ is separable for every $t \in T$. The remaining statement about convergence in the quantum Gromov-Hausdorff distance now follows by Theorem \ref{t:GHconv} provided that we can find an upper bound on the radii $\{ r_t^N \}_{t \in T}$ of the involved order unit compact quantum metric spaces, $\big\{ ( (V_t^N)^{\T{sa}}, L_t|_{(V_t^N)^{\T{sa}}} ) \big\}_{t \in T}$. 
However, letting $r_t \geq 0$ denote the radius of $( \cro{B,\be_t},L_t)$ we know from Theorem \ref{t:quacross} that $r_t \leq r_B + \frac{\pi}{2}$ and hence by Proposition \ref{p:radbou} it also holds that $r_t^N \leq r_B + \frac{\pi}{2}$ for all $t \in T$. This provides an upper bound on the involved radii.
\end{proof}

We now explain how to pass from the general convergence problem of Theorem \ref{t:crosconv} to the convergence problem, where we have an upper bound on the distance to the diagonal. This step in combination with Proposition \ref{p:GHband} will then allow us to provide a proof of Theorem \ref{t:crosconv}.
Define the unital subspace $V \su \cro{C(T,B),\be}$ by
\[
V := \{ \xi \in C_c(\zz,C(T,B)) \mid \xi(n) \in i(V_B) \T{ for all } n \in \zz \},
\]
where we recall that $i \colon  B \to C(T,B)$ embeds $B$ as constant $B$-valued maps on $T$. We let $V_t \su \cro{B,\be_t}$ denote the image of $V$ under the evaluation map $\T{ev}_t \colon  \cro{C(T,B,\be)} \to \cro{B,\be_t}$ and remark that $V_t = C_c(\zz,V_B)$ for all $t \in T$, as a vector space over $\cc$.\\
For $t \in T$ and $R \geq 0$, we recall the notation
\[
\C D_R(V_t^\sa) := \{ \xi \in V_t^\sa \mid L_t(\xi)\leq 1, \|\xi\|_t\leq R \} .
\]
The following lemma shows how quantum Gromov-Hausdorff continuity at the level of the bands $\left\{\left(V_t^N\right)^{\T{sa}}\right\}_{t\in T}$ can be lifted to  continuity of 
$\left\{V_t^{\T{sa}}\right\}_{t\in T}$.
\begin{lem}\label{lem:passage-to-band}
For every $\ep>0$ there exists an $N\in \nn_0 $ such that $\T{dist}_q\big(V_t^{\sa}, (V_t^N)^\sa\big)<\ep$ for all $t\in T$.
\end{lem}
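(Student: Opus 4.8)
The plan is to realize the band $(V_t^N)^{\sa}$ inside $V_t^{\sa}$ as the image of a fibrewise Fejér averaging operator that contracts the Lip-norm $L_t$ while displacing points by an amount that is small and, crucially, independent of $t$; feeding this into the $R$-order unit distance will then yield the uniform bound.

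First I would introduce, for each $N \in \nn_0$, the non-negative trigonometric polynomial $F_N := \sum_{n = -N}^N \big(1 - \tfrac{|n|}{N+1}\big) \varphi_n$ on $\B T$ (the Fejér kernel), a finite linear combination of characters with $\tfrac{1}{2\pi}\int_0^{2\pi} F_N(\exp(is))\,ds = 1$, and for each $t \in T$ the averaging operator $\Phi_N := \al^t_{F_N} \colon V_t \to V_t^N$, where $\al^t$ denotes the dual action on the fibre $\cro{B,\be_t}$. Explicitly $\Phi_N(\eta) = \sum_{n = -N}^N \big(1 - \tfrac{|n|}{N+1}\big) \eta(n) U^n$, so $\Phi_N$ maps $V_t^{\sa}$ into $(V_t^N)^{\sa}$ (it preserves self-adjointness, being an average of the $*$-automorphisms $\al^t_z$ against the real non-negative density $F_N$). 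Since $F_N \geq 0$ has $\|F_N\|_1 = 1$, the operator $\Phi_N$ is norm-contractive and, by the estimate established as condition $(2)$ in the proof of Theorem \ref{t:quacross} (see also \cite[Lemma 4.3]{Li:DCQ}), satisfies $L_t(\Phi_N(\eta)) \leq L_t(\eta)$. The key displacement estimate comes from writing
\[
\eta - \Phi_N(\eta) = \frac{1}{2\pi} \int_{-\pi}^\pi F_N(\exp(is)) \big( \eta - \al^t_{\exp(is)}(\eta) \big)\, ds
\]
and invoking the fibrewise form of Lemma \ref{l:circle} together with $\| \eta - \al^t_{\exp(is)}(\eta) \|_t \leq |s| \cd L^1_t(\eta)$ to obtain
\[
\| \eta - \Phi_N(\eta) \|_t \leq c_N \cd L_t(\eta), \q c_N := \frac{1}{2\pi} \int_{-\pi}^\pi F_N(\exp(is)) |s|\, ds .
\]
The decisive feature is that $c_N$ depends on $N$ alone, and $c_N \to 0$ as $N \to \infty$, since $F_N$ is an approximate identity concentrating at $1 \in \B T$ while $s \mapsto |s|$ vanishes there.

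With this in hand I would fix $R := r_B + \tfrac{\pi}{2}$, which by Theorem \ref{t:quacross} and Proposition \ref{p:radbou} (compare the radius bound in the proof of Proposition \ref{p:GHband}) exceeds the radii of both $(V_t^{\sa},L_t^{\sa})$ and $((V_t^N)^{\sa}, L_t|_{(V_t^N)^{\sa}})$ for every $t$, and estimate $\T{dist}_{\T{oq}}^R$. Taking the common normed real vector space to be $A_t^{\sa}$ with both maps the (isometric) inclusions, the order units agree so the unit term vanishes, and it remains to bound the Hausdorff distance between $\C D_R((V_t^N)^{\sa})$ and $\C D_R(V_t^{\sa})$ in $A_t^{\sa}$. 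One inclusion is immediate from $(V_t^N)^{\sa} \su V_t^{\sa}$; for the other, given $x \in \C D_R(V_t^{\sa})$ the element $\Phi_N(x)$ lies in $(V_t^N)^{\sa}$ with $L_t(\Phi_N(x)) \leq 1$ and $\| \Phi_N(x) \|_t \leq \|x\|_t \leq R$, hence $\Phi_N(x) \in \C D_R((V_t^N)^{\sa})$ and $\| x - \Phi_N(x) \|_t \leq c_N$. Thus $\T{dist}_{\T{oq}}^R(V_t^{\sa}, (V_t^N)^{\sa}) \leq c_N$ for all $t$, and Proposition \ref{p:lipghd} upgrades this to $\T{dist}_q(V_t^{\sa}, (V_t^N)^{\sa}) \leq 3 c_N$, uniformly in $t$. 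Given $\ep > 0$ I would then pick $N$ with $3 c_N < \ep$, which is exactly the claim.

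The hard part is not any single computation but ensuring the displacement rate $c_N$ is genuinely uniform over $t \in T$. This works precisely because the averaging is carried out along the dual circle action, so the amount an element of a Lip-ball moves is governed by the arc-length function $l$ and by $L^1_t \leq L_t$ alone, with the Fejér weight $c_N$ completely decoupled from the fibre; the remaining bookkeeping (the contraction properties of $\Phi_N$ and the convergence $c_N \to 0$) is routine and follows the template of \cite[Lemma 8.3 and 8.4]{Rie:GHD} and \cite[Lemma 4.4]{Li:DCQ}.
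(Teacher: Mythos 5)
Your proof is correct and follows essentially the same route as the paper: the paper fixes the same radius bound $R = r_B + \tfrac{\pi}{2}$, invokes \cite[Lemma 4.4]{Li:DCQ} as a black box to produce, uniformly in $t$, an approximant in $\C D_R\big((V_t^N)^\sa\big)$ for each element of $\C D_R(V_t^\sa)$, and then concludes via $\T{dist}_{\T{oq}}^R$ and Proposition \ref{p:lipghd} exactly as you do. The only difference is that you unpack that cited lemma explicitly via the Fej\'er averaging $\al_{F_N}$ and the $t$-independent displacement constant $c_N$, which is precisely the content of Li's (and Rieffel's) argument.
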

\begin{proof}
In Theorem \ref{t:quacross} we showed that $( \cro{B,\be_t}, L_t)$ is a compact quantum metric space for all $t \in T$, by showing that the conditions in \cite[Theorem 4.1]{Li:DCQ} are satisfied with constant $C=1$. Notice also that the length function $l \colon  \B T \to [0,\infty)$ (given by arc length) is independent of $t \in T$. In view of the upper bound on the radii, $r_t \leq r_B + \frac{\pi}{2}$ for all $t \in T$, we put $R := r_B + \frac{\pi}{2}$.
For a given $\ep>0$, \cite[Lemma 4.4]{Li:DCQ}, now provides us with an $N \in \nn_0 $ such that for all $t\in T$ and $\xi \in \C D_R(V_t^\sa)$ there exists $\xi'\in \C D_R((V_t^N)^\sa)$ with $\|\xi-\xi'\|_t < \frac{\ep}{3}$. Since $\C D_R((V_t^N)^\sa) \su \C D_R(V_t^\sa)$ this shows that $\T{dist}_H^{\rho_{\| \cd \|_t}}\big( \C D_R((V_t^N)^\sa), \C D_R(V_t^\sa) \big)< \frac{\ep}{3}$. Since the embeddings $(V_t^N)^\sa \su V_t^\sa \su \cro{B,\be_t}$ are unital (and isometric) we conclude that $\T{dist}_{\T{oq}}^R( (V_t^N)^\sa, V_t^{\sa}) < \frac{\ep}{3}$ for all $t \in T$. The result of the lemma now follows from Proposition \ref{p:lipghd}.
\end{proof}

\begin{proof}[Proof of Theorem \ref{t:crosconv}]
Fix $t_0\in T$ and $\ep>0$. We need to find an open neighbourhood $U\su T$ of $t_0 \in T$ such that for all $t\in U$ we have $\T{dist}_q(V_t^{\sa}, V_{t_0}^{\sa})<\ep$. By Lemma \ref{lem:passage-to-band} there exists an $N \in \nn_0 $ such that 
$\T{dist}_q(V_t^{\sa}, (V_t^N)^{\sa})<\frac{\ep}{3}$ for all $t\in T$. Moreover, since the assumptions in Theorem \ref{t:crosconv} are identical with those of Proposition \ref{p:GHband}, we obtain an open neighbourhood $U$ of $t_0$ such that $\T{dist}_q((V_t^N)^{\sa}, (V_{t_0}^N)^{\sa})< \frac{\ep}{3}$ for all $t \in U$. The estimate $\T{dist}_q(V_t^{\sa}, V_{t_0}^{\sa})<\ep$ now follows from the triangle inequality for the quantum Gromov-Hausdorff distance, see \cite[Theorem 4.3]{Rie:GHD}.
\end{proof}

\section{Diffeomorphisms of Riemannian manifolds}\label{sec:diff-of-riemannian}

Throughout this section $M$ denotes a connected, compact Riemannian manifold. We are going to investigate the quantum Gromov-Hausdorff convergence problem for crossed products by the integers related to diffeomorphisms of the manifold $M$. The setting will be a very general one where we do not put any restrictions on the diffeomorphisms we consider, in particular they are not assumed to be related to the Riemannian metric in any way. Relying on the results of the previous sections, for any such diffeomorphism we show how the corresponding crossed product becomes a compact quantum metric space and moreover we show that these crossed products vary continuously in the quantum Gromov-Hausdorff distance, whenever we have a family of diffeomorphisms $T \to \T{Diff}(M)$ which is continuous in the Whitney $C^1$-topology. We emphasize that, contrary to earlier approaches to crossed products by non-isometric actions, see for example \cite[Part I]{CoMo:LIF} and \cite[Section 4]{BMR:DSS}, we do not need to change the algebra of coordinates. This is particularly important when dealing with compact quantum metric spaces, since the substitutes in \cite{CoMo:LIF,BMR:DSS} are always non-compact and hence display a quite different behaviour from a quantum metric perspective, \cite{Lat:BLD,Lat:QLC,MeRe:NMU}.\\

We let $T_\rr M \to M$ denote the tangent bundle and $T M \to M$ denote the complexified tangent bundle. For each $p \in M$, we apply the notation $\inn{\cd,\cd}_p \colon  T_p M \ti T_p M \to \cc$ for the inner product on the fibre $T_p M$ coming from the Riemannian metric and in this way $T_p M$ becomes a finite dimensional complex Hilbert space. We apply the notation $\| \cd \|_p \colon  T_p M \to [0,\infty)$ for the corresponding Hilbert space norm. For any smooth function $f \in C^\infty(M)$ and each $p \in M$, we let $df(p) \colon  T_p M \to \cc$ denote the exterior derivative of $f$ at the point $p \in M$. 
Comparing with Section \ref{s:noniso} we now put $B = C(M)$, $V_B = C^\infty(M)$ and define the semi-norm $L_{C(M)} \colon  C^\infty(M) \to [0,\infty)$ by $L_{C(M)}(f) := \sup_{p \in M}\| df(p) \|_\infty$ for all $f \in C^\infty(M)$, where $\| \cd \|_\infty \colon  \B L( T_p M, \cc ) \to [0,\infty)$ denotes the operator norm. Let $d \colon  M \ti M \to [0,\infty)$ denote the metric on $M$ coming from the Riemannian metric, meaning that
\begin{equation}\label{eq:metrie}
d(p,q) = \inf\left\{ \int_a^b \| d \ga(t) \|_{\ga(t)}  dt \mathrel{\Big|} \ga \colon  [a,b] \to M \T{ smooth, } \ga(a) = p, \ga(b) = q \right\}. 
\end{equation}
In particular, for any smooth map $f \colon  M \to \cc$ we have the Lipschitz constant $C_f$ defined by
\[
C_f := \sup \left\{ \frac{|f(p) - f(q)| }{ d(p,q)} \mathrel{\Big|} p \neq q \right\}.
\]
Remark that $0 \leq C_f \leq L_{C(M)}(f) < \infty$ since
\begin{equation}\label{eq:lipuppmet}
\begin{split}
|f(p) - f(q)| & = | \int_a^b (f \ci \ga)'(t) dt | \leq \int_a^b \| df(\ga(t)) \|_\infty \| d\ga(t) \|_{\ga(t)} dt \\
& \leq L_{C(M)}(f) \int_a^b \| d\ga(t) \|_{\ga(t)} dt
\end{split}
\end{equation}
for every $p,q \in M$ and every smooth curve $\ga \colon  [a,b] \to M$ with $\ga(a) = p$ and $\ga(b) = q$.\\

The following result can be found in \cite[Proposition 1]{Con:CFH} in the context of spin manifolds, but is known to be valid in the much more general context of Lipschitz manifolds, see \cite[Remark 2]{Con:CFH}. For the sake of completeness, we present a few extra details on the proof:

\begin{prop}\label{p:quametrie}
Suppose that $M$ is a connected, compact Riemannian manifold. For each $f \in C^\infty(M)$, we have the identity $L_{C(M)}(f) = C_f$. In particular, it holds that $(C(M),L_{C(M)})$ is a compact quantum metric space and the corresponding metric $d_{L_{C(M)}}$ on $M \su S(C(M))$ agrees with the metric $d \colon  M \ti M \to [0,\infty)$ defined in \eqref{eq:metrie}.
\end{prop}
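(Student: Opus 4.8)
The plan is to prove the pointwise identity $\|df(p)\|_\infty = C_f$ in the strong sense that $L_{C(M)}(f)=\sup_p\|df(p)\|_\infty$ equals $C_f$. The inequality $C_f\leq L_{C(M)}(f)$ has already been recorded in \eqref{eq:lipuppmet}, so the entire equality reduces to establishing the reverse bound $\|df(p)\|_\infty\leq C_f$ for each $p\in M$, after which one takes the supremum over $p$. The idea is to realise the operator norm $\|df(p)\|_\infty$ through directional derivatives and to compare those with the difference quotients defining the Lipschitz constant $C_f$.

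First I would fix $p\in M$. The operator norm $\|df(p)\|_\infty$ is obtained by pairing $df(p)$ with unit tangent vectors, and every such $v\in T_pM$ with $\|v\|_p=1$ arises as the velocity $\gamma'(0)$ of a unit-speed geodesic $\gamma$ with $\gamma(0)=p$, so that $df(p)(v)=\tfrac{d}{dt}\big|_{t=0}f(\gamma(t))$. I would then write the difference quotient as
\[
\frac{f(\gamma(t)) - f(p)}{t} = \frac{f(\gamma(t)) - f(p)}{d(\gamma(t), p)} \cdot \frac{d(\gamma(t),p)}{t},
\]
and use two facts: that $d(\gamma(t),p)=|t|\cdot\|v\|_p=|t|$ for $t$ inside the injectivity radius (so the second factor tends to $1$), and that $|f(\gamma(t))-f(p)|\leq C_f\cdot d(\gamma(t),p)$ by definition of $C_f$. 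Letting $t\to 0$ gives $|df(p)(v)|\leq C_f$; taking the supremum over unit $v$ yields $\|df(p)\|_\infty\leq C_f$, and then the supremum over $p$ yields $L_{C(M)}(f)\leq C_f$. Combined with \eqref{eq:lipuppmet} this establishes $L_{C(M)}(f)=C_f$.

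With this identity in hand, the compact quantum metric space structure follows from the metric-space picture. Since $M$ is connected, $C_f=0$ forces $f$ to be constant, whence $\T{Ker}(L_{C(M)})=\cc 1$; moreover $L_{C(M)}(f^*)=L_{C(M)}(f)$ and $C^\infty(M)$ is norm-dense in $C(M)$. Now $L_{C(M)}=C_f$ is exactly the Lipschitz semi-norm $L_{C(X)}$ of the compact metric space $X=(M,d)$ restricted to the smooth functions, so $\B B_1(L_{C(M)})\su\B B_1(L_{C(X)})$. Because $(C(X),L_{C(X)})$ is a compact quantum metric space (Rieffel, \cite{Rie:MSS}), the image of $\B B_1(L_{C(X)})$ in $C(M)/\cc 1$ is totally bounded, hence so is the image of the smaller set $\B B_1(L_{C(M)})$; criterion $(3)$ of Theorem \ref{t:quamet} then shows that $(C(M),L_{C(M)})$ is a compact quantum metric space.

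Finally, for the identification of the induced metric on the point evaluations $\delta_p\in S(C(M))$, the bound $\rho_{L_{C(M)}}(\delta_p,\delta_q)\leq d(p,q)$ is immediate, since $L_{C(M)}(f)\leq 1$ gives $C_f\leq 1$ and hence $|f(p)-f(q)|\leq d(p,q)$. For the reverse bound I would approximate the $1$-Lipschitz function $x\mapsto d(x,p)$ uniformly by smooth functions $f_\varepsilon$ with $C_{f_\varepsilon}\leq 1+\varepsilon$; then $(1+\varepsilon)^{-1}f_\varepsilon\in\B B_1(L_{C(M)})$ and its difference at $p,q$ converges to $d(p,q)$, forcing $\rho_{L_{C(M)}}(\delta_p,\delta_q)\geq d(p,q)$. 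I expect the main obstacle to be precisely this smoothing step: one must approximate the distance function --- which is merely Lipschitz and fails to be smooth along the cut locus --- by smooth functions without inflating the Lipschitz constant beyond $1+\varepsilon$, and it is here that the Riemannian structure genuinely enters, through a mollification/partition-of-unity construction (or a heat-semigroup regularisation) as in \cite{Con:CFH}. By contrast, the pointwise operator-norm computation of the second paragraph, though the conceptual heart of the statement, is comparatively routine.
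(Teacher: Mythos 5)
Your proof of the central identity $L_{C(M)}(f)=C_f$ is essentially the paper's proof verbatim: the inequality $C_f\leq L_{C(M)}(f)$ is taken from \eqref{eq:lipuppmet}, and the reverse inequality is obtained by realising each unit tangent vector as the initial velocity of a locally length-minimizing geodesic $\ga$, factoring the difference quotient through $d(\ga(t),p)$, and using $d(\ga(t),p)/t\to 1$. Where you genuinely diverge is in the ``in particular'' clauses. The paper disposes of both of them in one line by invoking the Kantorovi\v{c}--Rubin\v{s}te\u{\i}n duality theory once the identity $L_{C(M)}=C_{(\cdot)}$ is known, whereas you give a more self-contained two-step argument: the compact quantum metric space property via the inclusion $\B B_1(L_{C(M)})\su \B B_1(L_{C(X)})$ for $X=(M,d)$, monotonicity of total boundedness, and criterion $(3)$ of Theorem \ref{t:quamet} (this is correct and arguably more transparent than the paper's citation); and the identification $\rho_{L_{C(M)}}(\de_p,\de_q)=d(p,q)$ by the easy upper bound plus smoothing of the $1$-Lipschitz function $d(\cdot,p)$. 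You are right that this last smoothing step is the only nontrivial point, and it is worth noting that it is exactly the issue the paper's appeal to Kantorovi\v{c}--Rubin\v{s}te\u{\i}n quietly absorbs: their duality is stated for the full Lipschitz algebra, whereas $\rho_{L_{C(M)}}$ is a supremum over the \emph{smooth} $1$-Lipschitz functions only, so some approximation of Lipschitz functions by smooth ones with Lipschitz constant at most $1+\ep$ (Greene--Wu type smoothing, or convolution in normal coordinates patched by a partition of unity) is needed in either treatment. Since this approximation result is classical and you cite it rather than misuse it, I regard your argument as complete in the same sense the paper's is; your route costs a little more writing but makes visible a step the paper leaves implicit.
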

\begin{proof}
Once we have established that $L_{C(M)}(f) = C_f$ for all $f \in C^\infty(M)$, the remaining claims of the proposition follow from the work of 
Kantorovi\v{c} and Rubin\v{s}te\u{\i}n, \cite{KaRu:FSE,KaRu:OSC}, so we focus on computing the semi-norm $L_{C(M)} \colon  C^\infty(M) \to [0,\infty)$. Let $f \in C^\infty(M)$ be given. From the computation in \eqref{eq:lipuppmet} it follows that $C_f \leq L_{C(M)}(f)$. On the other hand, for each $p \in M$ and each unit vector $\xi \in T_p M$, we may choose a geodesic $\ga \colon  (-\ep,\ep) \to M$ with $\ga(0) = p$ and $d\ga(0) = \xi$, see \cite[Theorem 4.27]{Lee:IRM}. Since geodesics are locally length minimizing, see \cite[Theorem 6.15]{Lee:IRM}, we may assume that 
\[
l(s) := d( \ga(s),p) = \int_0^s \| d\ga(t) \|_{\ga(t)} dt
\]
for all $s \in [0,\ep)$. Note that the function $l(s) := d(\ga(s),p)$ is differentiable at $s = 0$ with derivative $l'(0) = \| d\ga(0) \|_p = \| \xi \|_p = 1$. We then estimate that
\[
\begin{split}
| df(\xi) | & = | (f \ci \ga)'(0)| = \lim_{h \to 0^+} \left| \frac{f(\ga(h)) - f(p)}{h} \right| \\
& = \lim_{h \to 0^+} \left| \frac{f(\ga(h)) - f(p)}{l(h)} \cd \frac{l(h)}{h}\right| \leq C_f \cd \lim_{h \to 0^+} \frac{l(h)}{h} = C_f .
\end{split}
\]
This shows that $L_{C(M)}(f) \leq C_f$, and the proof is complete.
%
\end{proof}

For any diffeomorphism $\psi \colon  M \to M$ we obtain a $*$-automorphism $\be_\psi \colon  C(M) \to C(M)$, $f \mapsto f \ci \psi$ which restricts to an isomorphism $\be_\psi \colon  C^\infty(M) \to C^\infty(M)$.  Fixing an order preserving, normalized norm $\vertiii{\cdot}$ on $C_c(\zz)$ we may, as in Section \ref{s:noniso},  consider the reduced crossed product $A := \cro{C(M), \be_\psi}$ together with the semi-norm $L_\psi := \max\{L_1,L_2\} \colon  C_c(\zz,C^\infty(M)) \to [0,\infty)$ defined by the components
\[
\begin{split}
& L_1\big( \sum_{n = -\infty}^\infty f_n U^n\big) := \big\| \sum_{n = -\infty}^\infty n \cd f_n U^n \big\|_A \q \T{and} \\
& L_2\big( \sum_{n = -\infty}^\infty f_n U^n \big) := \max\big\{ \bvert{\sum_{n=-\infty}^\infty L_{C(M)}(f_n)e_n } , \bvert{\sum_{n=-\infty}^\infty L_{C(M)}\left( \ov{f_{-n}} \ci \psi^{\ci n}\right)e_n } \big\}
\end{split}
\]
for all $\sum_{n = -\infty}^\infty \xi_n U^n \in V_A := C_c(\zz,C^\infty(M))$. \\

The next result is now an immediate consequence of Theorem \ref{t:quacross}, once we note that the semi-norm $L_{C(M)} \colon  C^\infty(M) \to [0,\infty)$ is indeed lower semi-continuous, which follows from Proposition \ref{p:quametrie},  since 
\[
L_{C(M)}(f) = C_f = \sup\left\{ \frac{|f(p) - f(q)|}{d(p,q)} \mathrel{\Big|} p \neq q \right\}\q \T{for all }f \in C^\infty(M) .
\]
\begin{thm}
Suppose that $\psi \colon  M \to M$ is a diffeomorphism of a connected, compact Riemannian manifold. Then the semi-norm $L_\psi \colon  C_c(\zz,C^\infty(M)) \to [0,\infty)$ is lower semi-continuous and $\big( \cro{C(M),\be_\psi},L_\psi\big)$ is a compact quantum metric space.
\end{thm}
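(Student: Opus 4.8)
The plan is to deduce this statement directly from Theorem \ref{t:quacross}, applied in the special case $B = C(M)$, $V_B = C^\infty(M)$, $L_B = L_{C(M)}$ and $\be = \be_\psi$. The point is that the semi-norm $L_\psi = \max\{L_1,L_2\}$ appearing here is, by construction, precisely the semi-norm $L_A$ produced by the general recipe of Section \ref{s:noniso} for these choices of data. Consequently, once I have checked the three hypotheses of Theorem \ref{t:quacross} --- namely that $L_B$ is lower semi-continuous, that $(B,L_B)$ is a compact quantum metric space, and that $\be(V_B) = V_B$ --- the theorem yields \emph{simultaneously} both assertions I need, with no further computation: lower semi-continuity of $L_\psi$ and the compact quantum metric space structure on $\cro{C(M),\be_\psi}$.

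First I would dispose of the domain-preservation condition. Since $\psi \colon M \to M$ is a diffeomorphism, precomposition $f \mapsto f \ci \psi$ maps smooth functions to smooth functions and is a bijection of $C^\infty(M)$ onto itself, with inverse $g \mapsto g \ci \psi^{-1}$. Hence $\be_\psi(C^\infty(M)) = C^\infty(M)$, which is exactly the requirement $\be(V_B) = V_B$ of Theorem \ref{t:quacross}.

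Next I would invoke Proposition \ref{p:quametrie} to settle the remaining two hypotheses at one stroke. That proposition asserts both that $(C(M), L_{C(M)})$ is a compact quantum metric space --- which is the second hypothesis --- and the identity $L_{C(M)}(f) = C_f$, where $C_f$ denotes the minimal Lipschitz constant of $f$. This second identity is the crucial ingredient for lower semi-continuity: the defining formula $L_{C(M)}(f) = \sup_{p \in M}\| df(p) \|_\infty$ is not manifestly lower semi-continuous for the uniform norm, because differentiation is not $\| \cd \|_\infty$-continuous. Rewriting instead
\[
L_{C(M)}(f) = C_f = \sup\left\{ \frac{|f(p) - f(q)|}{d(p,q)} \mathrel{\Big|} p \neq q \right\}
\]
displays $L_{C(M)}$ as a pointwise supremum, over the index set $\{(p,q) \mid p \neq q\}$, of the maps $f \mapsto |f(p) - f(q)| / d(p,q)$. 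Each such map is continuous on $C(M)$ with respect to the uniform norm, and a pointwise supremum of continuous (a fortiori lower semi-continuous) functions is lower semi-continuous. Thus $L_{C(M)}$ is lower semi-continuous on $C^\infty(M)$, giving the first hypothesis.

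With all three hypotheses verified, Theorem \ref{t:quacross} closes the argument. I do not anticipate any genuine obstacle; the result is essentially a clean corollary. The one subtlety worth flagging is exactly the one above --- that lower semi-continuity of $L_{C(M)}$ must be extracted from the Lipschitz-constant reformulation of Proposition \ref{p:quametrie}, rather than from the defining supremum over the differentials, since the latter does not interact well with uniform convergence.
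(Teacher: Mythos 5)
Your proposal is correct and follows exactly the paper's own route: the theorem is deduced as an immediate consequence of Theorem \ref{t:quacross}, with the compact quantum metric space structure on $(C(M),L_{C(M)})$ supplied by Proposition \ref{p:quametrie} and lower semi-continuity of $L_{C(M)}$ extracted from the identity $L_{C(M)}(f)=C_f$ expressing it as a supremum of $\|\cd\|_\infty$-continuous functionals. The only difference is that you spell out the (trivial) verification that $\be_\psi$ preserves $C^\infty(M)$, which the paper leaves implicit.
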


Our next aim is to investigate the quantum Gromov-Hausdorff continuity of the crossed product as the diffeomorphism varies. To this end,
 we fix a compact Hausdorff space $T$ together with a map $\psi \colon  T \to \T{Diff}(M)$ associating a diffeomorphism $\psi_t \colon  M \to M$ to each $t \in T$. In particular, putting $\be_t := \be_{\psi_t}$ and $L_t := L_{\psi_t}$ we obtain a compact quantum metric space $\big( \cro{C(M),\be_t},L_t\big)$ for all $t \in T$. We are interested in the continuity properties of the map
\[
t \mapsto \big( \cro{C(M),\be_t},L_t\big)
\]
with respect to the quantum Gromov-Hausdorff distance. In order to investigate this problem we recall the definition of the Whitney $C^1$-topology on $C^\infty(M,M)$, see for example \cite[Chapter 2.1]{Hir:DT}. For a smooth map $f \colon  M \to M$, coordinate charts $(U,x)$ and $(V,y)$ on $M$, and any point $p \in U$ with $f(p) \in V$, we consider the Jacobian $J_{y,x}(f)(p) \in \B M_{\dim(M)}(\cc)$ defined by
\[
\big( J_{y,x}(f)(p) \big)_{ij} := \frac{\pa (y_i \ci f)}{\pa x_j}\Big|_p  \colon  U \to \cc  \q \T{for all } i,j \in \{1,\ldots,\dim(M)\}.
\]
Let now $f\in C^\infty(M,M)$ be fixed. Whenever $(U,x)$ and $(V,y)$ are coordinate charts on $M$, $K \su U$ is a compact set with $f(K) \su V$, and $\ep > 0$, we define the subset $W_f(x,y,K,\ep) \su C^\infty(M,M)$ consisting of those $g \in C^\infty(M,M)$ satisfying that
\begin{enumerate}
\item $\sup_{p \in K} d(f(p),g(p)) < \ep$;

\item $g(K) \su V$ and $\sup_{p \in K} \| J_{y,x}(f)(p) - J_{y,x}(g)(p)\|_\infty < \ep$ ,
\end{enumerate}
where $\| \cd \|_\infty \colon  \B M_{\dim(M)}(\cc) \to [0,\infty)$ denotes the operator norm, and $d$ denotes the Riemannian metric defined in \eqref{eq:metrie}. A neighbourhood basis around $f$ for the Whitney $C^1$-topology is then provided by taking finite intersections of the subsets $W_f(x,y,K,\ep) \su C^\infty(M,M)$ while varying the coordinate charts, the compact subset, and the parameter $\ep > 0$. We consider the diffeomorphism group of $M$, $\T{Diff}(M) \su C^\infty(M,M)$, endowed with the subspace topology coming from the Whitney $C^1$-topology, and remark that $\T{Diff}(M)$ in this way becomes a topological group; see for example \cite[Corollary 3.1.12]{Kup:DGM}. We can now announce the following main theorem,  the proof of which will occupy the remainder of this { section.}

\begin{thm}\label{t:convdiff}
Suppose that $M$ is a connected, compact Riemannian manifold, that $T$ is a compact, Hausdorff space and that $\psi \colon  T \to \T{Diff}(M)$ is continuous with respect to the Whitney $C^1$-topology. Then
\[
\lim_{t \to t_0} \T{dist}_q\big( (\cro{C(M),\be_t}, L_t), (\cro{C(M),\be_{t_0}}, L_{t_0})\big) = 0 
\]
for all $t_0 \in T$, where $\be_t(f) = f \ci \psi_t$ for all $f \in C(M)$, $t \in T$.
\end{thm}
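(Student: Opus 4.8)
The plan is to deduce the statement directly from the general convergence criterion, Theorem \ref{t:crosconv}, applied to the data $B = C(M)$, $V_B = C^\infty(M)$, $L_B = L_{C(M)}$ and the family $\be_t(f) := f\circ\psi_t$. Three of the hypotheses are immediate. Condition $(1)$ of Theorem \ref{t:crosconv} is precisely the content of Proposition \ref{p:quametrie} together with the lower semi-continuity of $L_{C(M)}$ recorded above. The requirement $\be_t(V_B) = V_B = \be_t^{-1}(V_B)$ holds because $\psi_t$ is a diffeomorphism, so $f\circ\psi_t \in C^\infty(M)$ if and only if $f\in C^\infty(M)$. Finally, continuity of $t\mapsto\be_t(f)$ in the $C^*$-norm for every $f\in C(M)$ follows exactly as in Corollary \ref{c:metisomet}: the Whitney $C^1$-topology is finer than the topology of uniform convergence, so $\psi_t\to\psi_{t_0}$ uniformly, and the uniform continuity of $f$ on the compact space $M$ then forces $\|f\circ\psi_t - f\circ\psi_{t_0}\|_\infty\to 0$. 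It therefore remains to verify conditions $(2)$ and $(3)$.

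Both remaining conditions will be reduced to a single estimate. For any diffeomorphism $\varphi$ of $M$ and any $f\in C^\infty(M)$, the chain rule $d(f\circ\varphi)(p) = df(\varphi(p))\circ d\varphi(p)$ together with the bijectivity of $\varphi$ yields
\[
L_{C(M)}(f\circ\varphi) = \sup_{p\in M}\|df(\varphi(p))\circ d\varphi(p)\|_\infty \leq L_{C(M)}(f)\cdot\|d\varphi\|,
\]
where $\|d\varphi\| = \sup_{p\in M}\|d\varphi(p)\|_{\B L(T_pM,T_{\varphi(p)}M)}$ is the quantity from Example \ref{ex:diff}. Since $\T{Diff}(M)$ is a topological group in the Whitney $C^1$-topology (\cite{Kup:DGM}) and $t\mapsto\psi_t$ is continuous, the maps $t\mapsto\psi_t^{\circ n}$, $t\mapsto\rho_t := \psi_{t_0}^{\circ(-n)}\circ\psi_t^{\circ n}$ and $t\mapsto\chi_t := \psi_t^{\circ(-n)}\circ\psi_{t_0}^{\circ n}$ are all continuous, and $\rho_{t_0} = \chi_{t_0} = \T{id}_M$. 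For condition $(2)$ I use $\be_t^n(f) = f\circ\psi_t^{\circ n} = (f\circ\psi_{t_0}^{\circ n})\circ\rho_t$, whence the displayed estimate gives $L_{C(M)}(\be_t^n(f)) \leq L_{C(M)}(\be_{t_0}^n(f))\cdot\|d\rho_t\|$; if $\|d\rho_t\|\to 1$ as $t\to t_0$, this yields the required upper semi-continuity of $t\mapsto L_{C(M)}(\be_t^n(f))$. For condition $(3)$ I invoke Lemma \ref{lem:operator-semi-norm-cont}: a direct computation gives $\be_{t_0,t}^n(f) = \be_{t_0}^n(\be_t^{-n}(f)) = f\circ\chi_t$, so the estimate shows $\|\be_{t_0,t}^n\|_{L_{C(M)}}\leq\|d\chi_t\|$; again, if $\|d\chi_t\|\to 1$ then $t\mapsto\|\be_{t_0,t}^n\|_{L_{C(M)}}$ is upper semi-continuous at $t_0$, and Lemma \ref{lem:operator-semi-norm-cont} delivers condition $(3)$.

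Thus everything hinges on the following key lemma, which I regard as the main obstacle: \emph{if $\rho_t\to\T{id}_M$ in the Whitney $C^1$-topology, then $\|d\rho_t\|\to 1$.} The difficulty is that $\|d\varphi\|$ is an intrinsic quantity, built from the operator norms of the differentials $d\rho_t(p)\colon T_pM\to T_{\rho_t(p)}M$ measured by the Riemannian metric, whereas the Whitney $C^1$-topology is defined through the coordinate Jacobians $J_{y,x}(\rho_t)(p)$. I would cover the compact manifold $M$ by finitely many pairs of coordinate charts and, via a Lebesgue-number argument, reduce to comparing $\rho_t$ with $\T{id}$ on finitely many compact coordinate patches; on each patch the Whitney $C^1$-convergence supplies uniform convergence $\rho_t(p)\to p$ together with uniform convergence of the Jacobians $J_{y,x}(\rho_t)(p)$ to the identity matrix. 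The remaining step is to translate these two pieces of coordinate information into the statement $\sup_p\|d\rho_t(p)\|_\infty\to 1$: this uses that in local coordinates the Riemannian metric is a smooth, positive-definite matrix field, hence uniformly continuous and bounded above and below on compact patches, so that the operator norm of $d\rho_t(p)$ — computed from the Jacobian and the metric coefficients at the nearby points $p$ and $\rho_t(p)$ — converges uniformly to that of the identity, namely $1$. Carrying out this chart bookkeeping, in particular matching the base point $p$ and its image $\rho_t(p)$ across charts while keeping the metric estimates uniform, is the technical heart of the argument. Once the key lemma is established, conditions $(2)$ and $(3)$ hold, and Theorem \ref{t:crosconv} yields $\lim_{t\to t_0}\T{dist}_q\big((\cro{C(M),\be_t},L_t),(\cro{C(M),\be_{t_0}},L_{t_0})\big) = 0$.
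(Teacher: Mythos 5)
Your proposal is correct and follows essentially the same route as the paper: reduce to Theorem \ref{t:crosconv}, verify condition (1) and the strong continuity of $t\mapsto\be_t(f)$ exactly as you do, and then use that $\T{Diff}(M)$ is a topological group in the Whitney $C^1$-topology to express $\be_{t_0,t}^n$ and $\be_{t,t_0}^n$ as pullbacks along Whitney-continuous families tending to the identity, so that conditions (2) and (3) follow from a continuity statement for the operator semi-norm. Your ``key lemma'' is precisely (a special case of) the paper's Proposition \ref{p:whitcont}, whose proof consists of exactly the chart-and-metric-coefficient bookkeeping you sketch.
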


The key continuity result which we need in order to prove Theorem \ref{t:convdiff} is contained in the following proposition. For the statement, 
we recall that the operator semi-norm $\| \cd \|_{L_{C(M)}}$ is defined on the linear endomorphisms of $C^\infty(M)$ that are bounded with respect to the semi-norm $L_{C(M)}\colon  C^\infty(M) \to [0,\infty)$, see \eqref{eq:opesemi}. 

For each coordinate system $(W,z)$ on $M$ we have the smooth map $g_z \colon  W \to \T{GL}_{\dim(M)}^+(\rr)$ defined by $(g_z)_{ij} = \inn{\frac{\pa}{\pa z_i}, \frac{\pa}{\pa z_j}}$. This yields a unitary local trivialization $\Phi_W \colon  TW \to W \ti \cc^{\dim(M)}$ defined by
\[
\Phi_W\left( \frac{\pa}{\pa z_i}\right) := g_z^{1/2}(e_i)  \q \T{for all } i \in \{1,2,\ldots,\dim(M) \} .
\]

\begin{prop}\label{p:whitcont}
Suppose that $\psi \colon  T \to \T{Diff}(M)$ is continuous with respect to the Whitney $C^1$-topology. Then the map $T \to [0,\infty)$ defined by $t \mapsto \| \be_t \|_{L_{C(M)}}$ is continuous. 
\end{prop}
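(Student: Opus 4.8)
The plan is to first identify the operator semi-norm $\|\be_t\|_{L_{C(M)}}$ with a purely geometric quantity and then prove continuity of the latter. Concretely, I would establish the identity
\[
\|\be_t\|_{L_{C(M)}} = \| d\psi_t\| := \sup_{p \in M} \| d\psi_t(p) \|_\infty,
\]
where $d\psi_t(p) \colon T_p M \to T_{\psi_t(p)} M$ is the differential and $\| \cd \|_\infty$ the operator norm induced by the Riemannian inner products. The inequality $\leq$ is the easy direction: the chain rule gives $d(\be_t f)(p) = df(\psi_t(p)) \ci d\psi_t(p)$, whence $\| d(\be_t f)(p)\|_\infty \leq \| df(\psi_t(p)) \|_\infty \cd \| d\psi_t(p) \|_\infty \leq L_{C(M)}(f) \cd \| d\psi_t\|$; taking suprema over $p \in M$ and over $f$ with $L_{C(M)}(f) \leq 1$ yields $\|\be_t\|_{L_{C(M)}} \leq \| d\psi_t\|$.

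For the reverse inequality I would pass through the Lipschitz description of the semi-norm. By Proposition \ref{p:quametrie} we have $L_{C(M)}(g) = C_g$ for all $g \in C^\infty(M)$, so that $\|\be_t\|_{L_{C(M)}} = \sup\{ C_{f \ci \psi_t} \mid C_f \leq 1\}$. Since $C_{f \ci \psi_t} \leq C_f \cd \Lip(\psi_t)$, where $\Lip(\psi_t)$ denotes the Lipschitz constant of $\psi_t \colon (M,d) \to (M,d)$, it suffices to exhibit, for each $\ep > 0$, a smooth $f$ with $C_f \leq 1$ and $C_{f \ci \psi_t}$ close to $\Lip(\psi_t)$. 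Choosing points $p_0 \neq p_0'$ with $d(\psi_t(p_0), \psi_t(p_0'))/d(p_0,p_0')$ within $\ep$ of $\Lip(\psi_t)$, and taking for $f$ a smooth, non-expansive approximation of the distance function $q \mapsto d(\psi_t(p_0), q)$, produces such a function. Combined with the standard Riemannian identity $\Lip(\psi_t) = \| d\psi_t\|$, this yields $\|\be_t\|_{L_{C(M)}} \geq \| d\psi_t\|$ and hence the displayed equality.

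It then remains to prove that $t \mapsto \| d\psi_t\|$ is continuous, and here I would use the unitary local trivializations $\Phi_W$ introduced above. Fix $t_0 \in T$ and cover the compact manifold $M$ by finitely many compact sets $K_\al$, each contained in a chart $(U_\al, x^\al)$ and with $\psi_{t_0}(K_\al)$ contained (with room to spare) in a chart $(V_\al, y^\al)$. The Whitney $C^1$-continuity of $t \mapsto \psi_t$ guarantees that for $t$ near $t_0$ one still has $\psi_t(K_\al) \su V_\al$ and that the coordinate Jacobians $J_{y^\al, x^\al}(\psi_t)$ converge uniformly on $K_\al$ to $J_{y^\al,x^\al}(\psi_{t_0})$. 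Since $\Phi_{U_\al}$ and $\Phi_{V_\al}$ are unitary, the operator norm $\| d\psi_t(p) \|_\infty$ equals the matrix norm of $g_{y^\al}^{1/2}(\psi_t(p)) \cd J_{y^\al,x^\al}(\psi_t)(p) \cd g_{x^\al}^{-1/2}(p)$; here the factor $g_{x^\al}^{-1/2}(p)$ is independent of $t$, while $g_{y^\al}^{1/2}(\psi_t(p))$ varies uniformly continuously in $t$ because $\psi_t \to \psi_{t_0}$ uniformly (the $C^0$-part of the Whitney topology) and $g_{y^\al}^{1/2}$ is continuous. Hence $p \mapsto \| d\psi_t(p)\|_\infty$ converges uniformly on each $K_\al$, thus uniformly on $M$, to $p \mapsto \| d\psi_{t_0}(p)\|_\infty$; passing to suprema gives $\| d\psi_t\| \to \| d\psi_{t_0}\|$.

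The main obstacle is the lower bound in the first identity: one must manufacture genuinely smooth functions that nearly saturate the Lipschitz constraint, which requires a careful non-expansive smoothing of distance functions on the manifold (of Greene--Wu type), together with the observation that the domain of $L_{C(M)}$ is $C^\infty(M)$ rather than merely the Lipschitz functions. The continuity argument, though longer, is essentially a compactness-and-bookkeeping exercise once the trivialization formula for $\| d\psi_t(p)\|_\infty$ is in place; its only delicate point is guaranteeing that $\psi_t(p)$ stays inside the target chart uniformly in $p \in K_\al$ for $t$ close to $t_0$, which is exactly what the $C^0$-component of the Whitney $C^1$-topology supplies.
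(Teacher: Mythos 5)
Your argument is correct in outline but takes a genuinely different route from the paper's. You first identify the operator semi-norm with the geometric quantity $\sup_{p\in M}\|d\psi_t(p)\|_\infty$ and then prove continuity of the latter; the paper never establishes (or needs) this identity. Instead it fixes $t_0$ and $\ep>0$ and shows directly that $\big| \|d(f\ci\psi_t)(\psi_t^{-1}(q))\|_\infty - \|d(f\ci\psi_{t_0})(\psi_{t_0}^{-1}(q))\|_\infty \big|\leq\ep$ uniformly over $q\in M$ and $f\in\B B_1(L_{C(M)})$ for all $t$ in a neighbourhood of $t_0$. The point of evaluating at $\psi_t^{-1}(q)$ is that the chain rule then writes both differentials as $df(q)$ composed with $d\psi_t(\psi_t^{-1}(q))$, resp.\ $d\psi_{t_0}(\psi_{t_0}^{-1}(q))$, with the \emph{same} factor $df(q)$ of norm at most one in front; the reverse triangle inequality then reduces everything to the estimate on $\Phi_V(q)\, d\psi_t(\psi_t^{-1}(q))\, \Phi_U^{-1}(\psi_t^{-1}(q))$ that you also prove. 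Indeed, your Jacobian/trivialization computation is essentially identical to the paper's, except that the paper parametrizes by the target point $q$ (and therefore also invokes the Whitney continuity of $t\mapsto\psi_t^{-1}$, available since $\T{Diff}(M)$ is a topological group), whereas you parametrize by the source point $p$, which makes the factor $g_x^{-1/2}(p)$ constant in $t$. In short, only the ``easy direction'' $\|\be_t\|_{L_{C(M)}}\leq\sup_p\|d\psi_t(p)\|_\infty$ of your identity ever enters the paper's proof, and it enters in the sharper uniform form just described. What your approach buys is an explicit formula for the operator semi-norm, which is of independent interest; what it costs is the lower bound.

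That lower bound is the one place where your write-up is only a sketch: you must produce, for each $\ep>0$, a \emph{smooth} $f$ with $L_{C(M)}(f)\leq 1$ whose composite with $\psi_t$ nearly saturates $\Lip(\psi_t)$, and since the distance function $q\mapsto d(\psi_t(p_0),q)$ is not smooth you need a non-expansive smooth approximation of Greene--Wu type, together with the identity $\Lip(\psi_t)=\sup_p\|d\psi_t(p)\|_\infty$. These are genuine, citable facts, so your route can be completed, but they import machinery that the statement does not require. If all you want is continuity of $t\mapsto\|\be_t\|_{L_{C(M)}}$, you can drop the identity altogether and run your trivialization estimate directly against $d(f\ci\psi_t)$, uniformly over the unit ball of $L_{C(M)}$, exactly as the paper does.
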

\begin{proof}
Let $t_0 \in T$ and $\ep > 0$ be given. It suffices to find an open neighbourhood $\Om$ of $t_0$ such that
\begin{equation}\label{eq:pointineq}
\big| \| d( f \ci \psi_t)( \psi_t^{-1}(q) ) \|_\infty - \| d(f \ci \psi_{t_0})( \psi_{t_0}^{-1}(q)) \|_\infty \big| \leq \ep 
\end{equation}
for all $t \in \Om$, $q \in M$ and $f \in \B B_1( L_{C(M)})$. Indeed, if the above inequality holds, then
\[
\begin{split}
 \big| L_{C(M)}( \be_t(f)) - L_{C(M)}(\be_{t_0}(f)) \big|  
 &= \Big| \sup_{q \in M} \| d(f \ci \psi_t)(\psi_t^{-1}(q)) \|_\infty - \sup_{q \in M} \| d(f \ci \psi_{t_0})(\psi_{t_0}^{-1}(q)) \|_\infty \Big|\\
&\leq \ep 
\end{split}
\]
for all $t \in \Om$ and all $f \in \B B_1(L_{C(M)})$, and hence
\[
\begin{split}
\big| \| \be_t \|_{L_{C(M)}} - \| \be_{t_0} \|_{L_{C(M)}} \big|  = \Big| \sup_{f \in \B B_1(L_{C(M)})} L_{C(M)}(\be_t(f) ) - \sup_{f \in \B B_1(L_{C(M)})} L_{C(M)}(\be_{t_0}(f) ) \Big|
\leq \ep 
\end{split}
\]
for all $t \in \Om$. {
At first glance, the evaluation at $\psi_t^{-1}(q)$ may seem strange, but it serves the purpose of making some of the estimates below a bit more manageable.}
So we focus on establishing the inequality in \eqref{eq:pointineq}. In fact, since $M$ is compact we only need to establish the inequality in \eqref{eq:pointineq} locally. More precisely, we fix a compact set $L \su M$ which is contained in a coordinate chart $(V,y)$, and  are then going to find an open neighbourhood $\Om$ of $t_0$ such that
\[
\big| \| d( f \ci \psi_t)( \psi_t^{-1}(q) ) \|_\infty - \| d(f \ci \psi_{t_0})( \psi_{t_0}^{-1}(q)) \|_\infty \big| \leq \ep 
\]
for all $t \in \Om$, $q \in L$ and $f \in \B B_1( L_{C(M)})$.\\
We start by choosing an open set $V' \su V$ such that $L \su V'$ and $\T{cl}(V') \su V$. Now consider the sets
\[
U' := \psi_{t_0}^{-1}(V') \q U := \psi_{t_0}^{-1}(V) \, \, \T{and} \, \, \, K := \T{cl}(U') = \psi_{t_0}^{-1}(\T{cl}(V')),
\]
and remark that $K \su U$ and $\psi_{t_0}(K) \su V$. We put $x := y \ci \psi_{t_0} \colon  U \to \rr^{\dim(M)}$ so that $(U,x)$ is a coordinate chart on $M$. We apply the notation $(U',x')$ and $(V',y')$ for the coordinate charts obtained from $(U,x)$ and $(V,y)$ by restriction.\\
Using the compactness of the sets $K$ and $L$, together with the continuity of the involved maps, we may choose a constant $C>0$ such that
\[
\| g_x^{-1/2}(p) \|_\infty \, , \, \, \| g_y^{1/2}(q) \|_\infty \, , \, \, \| J_{y,x}(\psi_{t_0})(\psi_{t_0}^{-1}(q)) \|_\infty \leq C 
\]
for all $p \in K$ and $q \in L$. Moreover, since continuous maps on compact metric spaces are uniformly continuous there exists a $\delta>0$ such that
\[
\| J_{y,x}(\psi_{t_0})(p) - J_{y,x}(\psi_{t_0})(p')\|_\infty \, , \, \,  \|g_x^{-1/2}(p)- g_x^{-1/2}(p')\|_\infty < \ep/3C^2 
\]
for all $p,p'\in K$ with $d(p,p')<\delta$. \\

Next, put $\ep'=\min\{\delta, \ep/3C^2\}$ and consider the Whitney $C^1$-neighbourhood $W_{\psi_{t_0}}(x, y, K, \ep')$ around $\psi_{t_0}$ and the Whitney $C^1$-neighbourhood $W_{\psi_{t_0}^{-1}}(y', x', L, \ep')$ around $\psi_{t_0}^{-1}$. Since our map $\psi \colon  T \to \T{Diff}(M)$ is assumed continuous with respect to the Whitney $C^1$-topology and $\T{Diff}(M)$ is a topological group, we may choose an open neighbourhood $\Om \su T$ of $t_0 $ such that 
\[
\psi_t\in W_{\psi_{t_0}}(x,y, K, \ep') \q \T{and} \q \psi_t^{-1} \in W_{\psi_{t_0}^{-1}}(y', x',  L, \ep')  
\]
for all $t\in \Omega$. This implies that
\begin{align}
\psi_t^{-1}(L)\su U' \su K \q  &\T{and} \q \psi_t(K) \su V, \\
 d(\psi_t^{-1}(q), \psi_{t_0}^{-1}(q))<\de \q &\T{and} 
\q \big\| J_{y,x}(\psi_{t})( p) - J_{y,x}(\psi_{t_0})( p) \big\|_\infty < \frac{\ep}{3 C^2}
\end{align}
for all $t\in \Omega$, $p \in K$ and $q \in L$.

For each $q \in L$ and each $t \in \Om$, we recall that 
\[
\begin{split}
d \psi_t(\psi_t^{-1}(q))\left( \frac{\pa}{\pa x_j}\Big|_{\psi_t^{-1}(q)} \right)
& = \sum_{i = 1}^{\dim(M)} \frac{\pa}{\pa y_i}\Big|_q \cd \frac{\pa (y_i \ci \psi_t)}{\pa x_j}\Big|_{\psi_t^{-1}(q)} \\
& = \sum_{i = 1}^{\dim(M)} \frac{\pa}{\pa y_i}\Big|_q \cd \big( J_{y,x}(\psi_t)(\psi_t^{-1}(q)) \big)_{ij},
\end{split}
\]
for all $j \in \{1,2,\ldots,\dim(M)\}$. We thus have the identity
\[
\Phi_V(q) d \psi_t(\psi_t^{-1}(q)) \Phi_U^{-1}(\psi_t^{-1}(q))
= g_y^{1/2}(q) \cd J_{y,x}( \psi_t)(\psi_t^{-1}(q)) \cd g_x^{-1/2}(\psi_t^{-1}(q)) ,
\]
where the operator on the right hand side is expressed as an element in the matrix algebra $\B M_{\dim(M)}(\cc)$. As we will see in a moment, the following estimate, holding for each $q \in L$ and each $t \in \Om$, is the key to obtain \eqref{eq:pointineq}:
\[
\begin{split}
& \hspace{0.55cm} \big\| \Phi_V(q) d \psi_t(\psi_t^{-1}(q)) \Phi_U^{-1}(\psi_t^{-1}(q)) - \Phi_V(q) d \psi_{t_0}(\psi_{t_0}^{-1}(q)) \Phi_U^{-1}(\psi_{t_0}^{-1}(q)) \big\|_\infty \\
&  \leq \big\| g_y^{1/2}(q) \big\|_\infty \cd \big\| J_{y,x}( \psi_t)(\psi_t^{-1}(q)) \cd g_x^{-1/2}(\psi_t^{-1}(q)) 
 - J_{y,x}( \psi_{t_0})(\psi_{t_0}^{-1}(q)) \cd g_x^{-1/2}(\psi_{t_0}^{-1}(q)) \big\|_\infty \\
& \leq C \cd \| J_{y,x}( \psi_t)(\psi_t^{-1}(q)) - J_{y,x}(\psi_{t_0})(\psi_{t_0}^{-1}(q)) \|_\infty \cd \| g_x^{-1/2}(\psi_t^{-1}(q)) \|_\infty  \\
&  \hspace{0.4cm} + C \cd \| J_{y,x}( \psi_{t_0}) (\psi_{t_0}^{-1}(q)) \|_\infty \cd \| g_x^{-1/2}(\psi_t^{-1}(q)) - g_x^{-1/2}(\psi_{t_0}^{-1}(q))\|_\infty \\
& < C^2 \cd \| J_{y,x}( \psi_t)(\psi_t^{-1}(q)) - J_{y,x}(\psi_{t_0})(\psi_t^{-1}(q)) \|_\infty \\
&   \hspace{0.4cm}+ C^2 \cd \| J_{y,x}( \psi_{t_0})(\psi_t^{-1}(q)) - J_{y,x}(\psi_{t_0})(\psi_{t_0}^{-1}(q)) \|_\infty + \frac{\ep}{3} \\
& < \ep .
\end{split}
\]
We now consider an $f \in C^\infty(M)$ with $L_{C(M)}(f):=\sup_{p\in M} \|df(p)\|_{\infty} \leq 1$. We then see that
\[
\begin{split}
& \hspace{1.36cm} \Big| \| d(f \ci \psi_t) (\psi_t^{-1}(q)) \|_\infty -  \| d(f \ci \psi_{t_0}) (\psi_{t_0}^{-1}(q)) \|_\infty \Big| \\
& \q = \Big| \| df(q) d\psi_t( \psi_t^{-1}(q)) \|_\infty -  \| df(q) d\psi_{t_0} (\psi_{t_0}^{-1}(q)) \|_\infty \Big|  \\
& \q = \Big| \| df(q) d\psi_t( \psi_t^{-1}(q)) \Phi_U^{-1}(\psi_t^{-1}(q)) \|_\infty -  \| df(q) d\psi_{t_0} (\psi_{t_0}^{-1}(q)) \Phi_U^{-1}(\psi_{t_0}^{-1}(q))\|_\infty \Big|  \\
& \q \leq \| df(q) \|_\infty \cd \big\| d\psi_t( \psi_t^{-1}(q)) \Phi_U^{-1}(\psi_t^{-1}(q))  -  d\psi_{t_0} (\psi_{t_0}^{-1}(q)) \Phi_U^{-1}(\psi_{t_0}^{-1}(q))\big\|_\infty \\
& \q < \ep 
\end{split}
\]
for all $q \in L$ and all $t \in \Om$. This proves the proposition. 
\end{proof}

We are now ready to provide the proof of the main theorem of this {section}:

\begin{proof}[Proof of Theorem \ref{t:convdiff}]
We need to verify the conditions in Theorem \ref{t:crosconv}. We start by noting that the map $T \to C(M)$ given by $t \mapsto \be_t(f) = f \circ \psi_t$ is continuous whenever $f \in C(M)$ is fixed. To see this, one may apply the argument given in the proof of Corollary \ref{c:metisomet} and we will therefore not repeat the details here. 

We now verify the conditions $(1)$, $(2)$ and $(3)$ from Theorem \ref{t:crosconv}. Condition $(1)$ is already proved in Proposition \ref{p:quametrie}. For condition $(2)$ and $(3)$ we fix an $n \in \zz$ and a $t_0 \in T$. Since $\T{Diff}(M)$ is a topological group with respect to the Whitney $C^1$-topology we know that the maps $T \to \T{Diff}(M)$ given by $t \mapsto \psi_t^{-n} \ci \psi_{t_0}^n$ and $t \mapsto \psi_{t_0}^{-n} \ci \psi_t^n$ are continuous. We have the identities 
\[
\begin{split}
\be_{t_0,t}^n = \be_{t_0}^n \ci \be_t^{-n} = \be_{\psi_t^{-n} \ci \psi_{t_0}^n} \q \T{and} \q
\be_{t,t_0}^n = \be_t^n \ci \be_{t_0}^{-n} = \be_{\psi_{t_0}^{-n} \ci \psi_t^n} ,
\end{split}
\]
which together with Proposition \ref{p:whitcont} imply that the maps 
\[
t \mapsto \| \be_{t_0,t}^n \|_{L_{C(M)}} \q \T{and} \q t \mapsto \| \be_{t,t_0}^n \|_{L_{C(M)}}
\]
are continuous. By Lemma \ref{lem:operator-semi-norm-cont}, this entails that condition $(3)$ holds. Moreover, using that $\be_{t_0,t_0}^n = \T{id} : C(M) \to C(M)$ we may, for each $\ep > 0$, find an open neighbourhood $\Om$ of $t_0 \in T$ such that for all $f \in C^\infty(M)$ and all $t \in \Om$ we have
\[
\begin{split}
 L_{C(M)}( \be_t^n(f)) & =L_{C(M)}(\be_t^n\circ \be_{t_0}^{-n}\circ \be_{t_0}^n(f))
\leq \|\be_{t,t_0}^n\|_{L_{C(M)}}  L_{C(M)}(\be_{t_0}^n(f))  \\ & \leq (1 + \ep) \cd L_{C(M)}(\be_{t_0}^n(f)) .
\end{split}
\]
This shows that condition $(2)$ also holds and we have proved the theorem.
\end{proof}


\providecommand{\bysame}{\leavevmode\hbox to3em{\hrulefill}\thinspace}
\providecommand{\MR}{\relax\ifhmode\unskip\space\fi MR }
\providecommand{\MRhref}[2]{%
  \href{http://www.ams.org/mathscinet-getitem?mr=#1}{#2}
}
\providecommand{\href}[2]{#2}

\end{document}